\newtheorem{theorem}{Theorem}[section]
\newtheorem{lemma}[theorem]{Lemma}
\newtheorem{corollary}[theorem]{Corollary}
\newtheorem{proposition}[theorem]{Proposition}
\newtheorem{claim}[theorem]{Claim}
\newtheorem*{conjecture*}{Conjecture}
\newtheorem*{claim*}{Claim}
\newtheorem*{theorem*}{Theorem}
\theoremstyle{remark}
\newtheorem{remark}[theorem]{Remark}
\theoremstyle{definition}
\newtheorem{definition}[theorem]{Definition}
\newcommand{\A}{\mathcal{A}}
\renewcommand{\S}{\mathcal{S}}
\newcommand{\T}{\mathcal{T}}
\newcommand{\ZZ}{\mathbb{Z}^2}
\newcommand{\R}{\mathbb{R}}
\newcommand{\Z}{\mathbb{Z}}
\newcommand{\N}{\mathbb{N}}
\newcommand{\rst}[1]{\ensuremath{{\mathbin\upharpoonright}%
\raise-.5ex\hbox{$#1$}}}
\newcommand{\conv}{{\rm conv}}
\newcommand{\suc}{{\rm succ}}
\title{Complexity of short rectangles and periodicity}
\author{Van Cyr}
\address{Northwestern University, Evanston, IL 60208 USA}
\email{cyr@math.northwestern.edu}
\author{Bryna Kra}
\address{Northwestern University, Evanston, IL 60208 USA}
\email{kra@math.northwestern.edu}
\subjclass[2010]{37B50 (primary), 68R15, 37B10}
\keywords{Nivat's Conjecture, nonexpansive subdynamics, block complexity, periodicity}
\thanks{The authors thank the Institute Henri Poincar\'e for hospitality while part of this work was completed.  The second author was partially supported by NSF grant $1200971$.}
\begin{document}

\begin{abstract}
The Morse-Hedlund Theorem states that  a bi-infinite sequence $\eta$ in a finite alphabet is periodic if and only if there exists $n\in\N$ such that
the block complexity function $P_\eta(n)$ satisfies $P_\eta(n)\leq n$.  In dimension two, Nivat conjectured that if there exist $n,k\in\N$ such that the $n\times k$ rectangular complexity $P_{\eta}(n,k)$ satisfies $P_{\eta}(n,k)\leq nk$, then $\eta$ is periodic.  
Sander and Tijdeman showed that this holds for $k\leq2$.  We generalize their result, showing 
that Nivat's Conjecture holds for $k\leq3$.  The method involves translating the combinatorial problem to a question about the nonexpansive subspaces of a certain $\ZZ$ dynamical system, 
and then analyzing the resulting system.
\end{abstract}

\maketitle

\section{Nivat's Conjecture for patterns of height $3$}

\subsection{Background and statement of the theorem}

The Morse-Hedlund Theorem~\cite{MH} gives a classic relation between the periodicity 
of a bi-infinite sequence taking values in a finite alphabet $\A$ and the complexity of the sequence.  
For higher dimensional sequences $\eta=\bigl(\eta(\vec n)\colon 
\vec n\in\Z^d\bigr)$ with $d\geq 1$ taking values in the finite alphabet $\A$, a 
possible generalization is the Nivat Conjecture~\cite{nivat}.  To state this precisely, we define $\eta\colon\Z^d\to\A$ to be {\em periodic} if there exists $\vec m\in\Z^d$ with $\vec m\neq \vec 0$ 
such that $\eta(\vec n+\vec m) = \eta(\vec n)$ for all $\vec n\in\Z^d$ and 
define the {\em rectangular complexity $P_\eta(n,k)$} to be the number of distinct 
$n$ by $k$ rectangular patterns that occur in $\eta$. 
Nivat conjectured that for $d=2$, if there exist $n,k\in\N$ such that 
$P_\eta(n,k)\leq nk$, then $\eta$ is periodic.  
This is a two dimensional phenomenon, as counterexamples for the corresponding 
statement in dimension $d\geq 3$ were given in~\cite{ST2}.  
There are numerous partial results, including for example~\cite{ST2, EKM, QZ} (see also 
related results in~\cite{BV, C, DR}).  In~\cite{CK} we showed that under the 
stronger hypothesis that there exist $n,k\in\N$ such that $P_\eta(n,k)\leq nk/2$, then $\eta$ is 
periodic. 

We prove that Nivat's Conjecture holds for rectangular patterns of height at most $3$: 
\begin{theorem}
\label{th:main}
Suppose $\eta\colon \ZZ\to\A$, where $\A$ denotes a finite alphabet.  Assume that there exists $n\in\N$ such that $P_{\eta}(n,3)\leq 3n$.  Then $\eta$ is periodic.
\end{theorem}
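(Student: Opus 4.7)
The plan is to argue by contradiction, translating the combinatorial hypothesis into dynamical language as in the authors' previous work. Assume $\eta\colon\ZZ\to\A$ is not periodic and that $P_\eta(n,3)\leq 3n$ for some $n\in\N$. Form the orbit closure $X=X_\eta\subseteq\A^{\ZZ}$ under the $\ZZ$ shift action $\sigma$; since $\eta$ is not periodic, $X$ is infinite.

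The first step is to encode the complexity hypothesis as a statement about the nonexpansive subspaces of $(X,\sigma)$. Recall that a line $\ell\subset\R^2$ through the origin is \emph{expansive} if configurations in a sufficiently thick tubular neighborhood of $\ell$ determine configurations on all of $\ZZ$. By the Boyle--Lind theorem, the infinite system $(X,\sigma)$ admits at least one nonexpansive line through the origin. Under the hypothesis $P_\eta(n,3)\leq 3n$, I expect to show first that the horizontal line is expansive, and then to classify the possible slopes of nonexpansive lines: the height-$3$ constraint, together with a careful count of how height-$3$ horizontal patterns must glue together, should force any nonexpansive slope to be either vertical or rational with bounded denominator.

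Next I would exploit the structure of each type of nonexpansive direction to produce a period of $\eta$. For a nonexpansive line of rational slope $p/q$, one can regroup cells into a coarser alphabet so that the $3n$ complexity bound translates into a one-dimensional bound along that direction to which the Morse--Hedlund theorem applies, forcing periodicity in that direction. For the vertical nonexpansive direction, a finer combinatorial argument is needed: I would analyze how the at most $3n$ allowed $n\times 3$ rectangular patterns can be stacked horizontally, using a pigeonhole or return-time argument on height-$3$ strips to produce a horizontal translation preserving $\eta$.

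The main obstacle I expect is the vertical nonexpansive case, possibly combined with the coexistence of several nonexpansive directions. Height $3$ is the first height where genuinely two-dimensional obstructions to periodicity appear (the Sander--Tijdeman treatment of height $2$ reduces essentially to analysis of adjacent rows), so new ideas are needed beyond their argument. Ruling out pathological configurations of nonexpansive lines will likely require delicate casework on whether $P_\eta(n,3)<3n$ or equality holds, and on arithmetic constraints from the residues modulo $3$, since height-$3$ patterns can only fit together in very constrained ways along slopes whose denominator interacts nontrivially with $3$.
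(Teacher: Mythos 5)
Your high-level framing (pass to the orbit closure $X_\eta$, invoke Boyle--Lind, and analyze nonexpansive lines) matches the paper's starting point, but the core of your plan has genuine gaps. First, the step ``show the horizontal line is expansive'' is not available: under the hypothesis $P_\eta(n,3)\leq 3n$ the horizontal direction can perfectly well be nonexpansive, and the paper has to treat horizontal nonexpansive directions as a special (and somewhat delicate) case throughout, e.g.\ in the construction of balanced sets and in the two- and three-line arguments. Second, and more seriously, your proposed mechanism for extracting periodicity --- recoding along a rational nonexpansive direction into a coarser alphabet so that the $3n$ bound becomes a one-dimensional Morse--Hedlund bound, plus a pigeonhole argument for the vertical case --- is not a workable argument as stated: the existence of a nonexpansive direction does not by itself convert the two-dimensional rectangular complexity bound into a one-dimensional complexity bound along that direction, and no amount of regrouping cells does this directly. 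The actual proof needs the machinery of $\eta$-generating sets and the discrepancy function (minimal convex subsets of $R_{n,3}$ with nonpositive discrepancy), $\ell$-balanced sets, and a nontrivial argument (Proposition~\ref{cor:antiparallel}) that both orientations of a nonexpansive line are nonexpansive; Morse--Hedlund enters only through these sets, applied to colorings of thin strips whose patterns are controlled by Corollary~\ref{cor:bound}.

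You have also misidentified where the difficulty lies and omitted the reduction that makes the theorem finite work. The paper does not prove periodicity direction-by-direction; it shows that the generating set $\S\subseteq R_{n,3}$ has at most six edges, hence there are at most three nonexpansive lines, and then devotes Propositions~\ref{exactlytwo} and~\ref{exactlythree} --- the bulk of the paper --- to ruling out exactly two and exactly three nonexpansive lines under the complexity hypothesis. The remaining cases are handled by citation: zero nonexpansive lines gives double periodicity by Boyle--Lind (Theorem~\ref{thm:doublyperiodic}), and the single-line case is exactly the authors' earlier theorem (Theorem~\ref{thm:singlyperiodic} from~\cite{CK}), which your outline would otherwise have to reprove from scratch. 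By contrast, the vertical nonexpansive case, which you flag as the main obstacle, is actually one of the easier configurations (a vertical line meets $R_{n,3}$ in three points, so $R_{n,3}$ itself is already balanced in that direction), and the anticipated ``arithmetic constraints from residues modulo $3$'' play no role in the actual argument. As it stands the proposal is a sketch whose key steps either fail (horizontal expansivity, the recoding to one dimension) or are missing (balanced sets, the antiparallel proposition, the elimination of the two- and three-line cases), so it does not constitute a proof.
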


If there exists $n\in\N$ such that $P_\eta(n, 1)\leq n$, periodicity of $\eta$ follows quickly from 
the Morse-Hedlund Theorem~\cite{MH}: each row is horizontally periodic of period at most $n$ and so 
$n!$ is an upper bound for the minimal horizontal period of $\eta$.  
When there exists $n\in\N$ such that $P_\eta(n,2)\leq 2n$, periodicity of $\eta$ 
was established by Sander and Tijdeman~\cite{ST}.  
The extension to patterns of height $3$ is the main result of this article.  By the obvious symmetry, 
the analogous result holds if there exists $n\in\N$ such that $P_\eta(3,n)\leq 3n$.

\subsection{Generalized complexity functions}
To study rectangular complexity, we need to consider the complexity of more general shapes.  As introduced by Sander and Tijdeman~\cite{ST2}, if $\S\subset\Z^2$ is a finite set, we define $P_\eta(\S)$ to be the number of distinct 
patterns in $\eta$ that can fill the shape $\S$.  For example, $P_\eta(n,k) = P_\eta(R_{n,k})$, where 
$R_{n,k}$ denotes the $n$ by $k$ rectangle. 
Similar to methods introduced in~\cite{CK}, 
we find subsets of $R_{n,3}$ (the {\em generating sets}) that can be used to study periodicity.  
Using the restrictive geometry imposed by patterns of height $3$, we derive stronger properties that allow us to prove periodicity only using the complexity bound $3n$, rather than $3n/2$ as relied upon in~\cite{CK}. 

\subsection{Translation to dynamics}
As in~\cite{CK}, we translate the problem to a dynamical one.  
We define a dynamical system associated with $\eta\colon\ZZ\to\A$ in a standard way: endow $\A$ with the discrete topology, $X = \A^{\Z^2}$ with the product topology, and define the $\Z^2$-action by translations 
on $X$ by $(T^{\vec u}\eta)(\vec x):=\eta(\vec x+\vec u)$ for $\vec u \in\Z^2$.  With respect to this topology, the maps 
$T^{\vec u}\colon X\to X$ are continuous.  
Let $\mathcal{O}(\eta):=\{T^{\vec u}\eta\colon\vec u\in\Z^2\}$ denote the 
$\Z^2$-orbit of $\eta\in\A^{\Z^2}$ and set $X_{\eta}:=\overline{\mathcal{O}(\eta)}$.  
When we refer to the dynamical system $X_\eta$, we implicitly assume that this means the space $X_\eta$ endowed 
with the $\Z^2$-action by the translations $T^{\vec u}$, where $\vec u\in\Z^2$.  Note that in general $\overline{\mathcal{O}(\eta)}\setminus\mathcal{O}(\eta)$ is nonempty. 

The dynamical system $X_\eta$ 
reflects the properties of $\eta$.  An often used fact is that if $F\subset\ZZ$ is finite and $f\in X_{\eta}$, then there exists $\vec u\in\ZZ$ such that $(T^{\vec u}\eta)\rst{F}=f\rst{F}$, 
where by $\cdot\rst{F}$ we mean the restriction to the region $F$.  
So, for example, if $\eta$ satisfies some complexity bound, 
such as the existence a finite set $\S\subset\Z^2$ satisfying $P_\eta(\S)\leq N$ for some 
$N\geq 1$, then every $f\in X_\eta$ satisfies the same complexity bound.  Moreover, 
if $\eta$ is periodic with some period vector, then every $f\in X_\eta$ is also periodic with 
the same period vector.
Similarly, if $\vec u\in\Z^2$ and $F\subset\Z^2$,
there is a natural correspondence between a coloring of the form 
$(T^{-\vec u}f)\rst{F}$ and a coloring $f\rst{F+\vec u}$.  

Characterizing periodicity of
$\eta\in \A^{\Z^2}$ 
amounts to studying properties of
its orbit closure $X_\eta$.  In particular, note that $\eta$ is doubly periodic if and only if it 
has two non-commensurate period vectors, or equivalently $X_{\eta}$ is finite.

\subsection{Expansive directions}

\label{sec:intro-expansive}
Restricting a more general definition given by Boyle and Lind~\cite{BL} to a 
dynamical system $X$ with a continuous $\Z^2$-action $(T^{\vec u}\colon \vec u\in\R^2)$ on $X$, we say that a line $\ell\subset\R^2$ is an {\em expansive line} if 
there exist $r> 0$ and $\delta > 0$ such that whenever $f, g\in X$ satisfy 
$d(T^{\vec u}f, T^{\vec u} g) < \delta$ for all $\vec u\in\Z^2$ with $d(\vec u, \ell) < r$, then 
$f=g$.  Any line that is not expansive is called a {\em nonexpansive line}. 

For the system $X = \A^{\Z^2}$ with the continuous $\Z^2$-action on $X$ by translation (sometimes called the {\em full $\A$-shift}), 
it is easy to see that there are no expansive lines.  However, more interesting behavior 
arises when we restrict to $X_\eta$.  

Boyle and Lind~\cite{BL} proved a general theorem that nonexpansive lines (and, more generally, subspaces) are abundant.  In 
the context of $X_\eta$ with the continuous $\Z^2$-action on $X_\eta$ 
by translation, this theorem implies that for infinite $X_\eta$,  
there exists at least one nonexpansive line.  Rephrased in our context the Boyle and Lind 
result becomes: 
\begin{theorem}[Boyle and Lind~\cite{BL}]\label{thm:doublyperiodic}
For $\eta\colon\ZZ\to\A$,  $\eta$ is doubly periodic if and only if there are no nonexpansive lines for the $\ZZ$-action by translation on $X_\eta$.
\end{theorem}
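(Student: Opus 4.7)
The plan is to handle the two implications separately: the forward implication follows from an immediate discreteness argument, and the reverse implication reduces to the general Boyle--Lind structure theorem for expansive subdynamics.

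For the forward direction, suppose $\eta$ is doubly periodic; then $X_\eta$ is finite and hence carries the discrete topology. Choose $\delta_0 > 0$ so that $d(f,g) < \delta_0$ implies $f = g$ for all $f, g \in X_\eta$. Given any line $\ell \subset \R^2$, pick $r > 0$ large enough that the strip $\{\vec u \in \R^2 : d(\vec u, \ell) < r\}$ contains a lattice point (which it does for all sufficiently large $r$, regardless of the slope of $\ell$). Then the condition $d(T^{\vec u} f, T^{\vec u} g) < \delta_0$ at any such $\vec u \in \ZZ$ forces $T^{\vec u} f = T^{\vec u} g$ and hence $f = g$, so $\ell$ is expansive.

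For the reverse direction we prove the contrapositive: if $X_\eta$ is infinite, then there exists a nonexpansive line. First note that the $\ZZ$-action on $X_\eta$ is expansive in the ordinary sense, because choosing $\delta_0 > 0$ so that $d(f,g) < \delta_0$ forces $f(\vec 0) = g(\vec 0)$, the hypothesis $d(T^{\vec u}f, T^{\vec u}g) < \delta_0$ for every $\vec u \in \ZZ$ then forces $f = g$. Now invoke the Boyle--Lind structure theorem for expansive subdynamics: for an expansive $\Z^d$-action on an infinite compact metric space and any $1 \le k \le d-1$, the set of expansive $k$-dimensional subspaces is a \emph{proper} open subset of the Grassmannian of $k$-planes. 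Specializing to $d = 2$ and $k = 1$, the expansive lines through the origin form a proper open subset of the projective line, so a nonexpansive line must exist.

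The substance lies entirely in the Boyle--Lind theorem, whose proof combines the openness of expansiveness in the Grassmannian with a compactness argument to rule out uniform codimension-one expansiveness when the phase space is infinite. For our purposes we take this as a black box; the only verification required is that the shift action on $X_\eta$ is expansive, which we checked above, and that infiniteness of $X_\eta$ is equivalent to $\eta$ failing to be doubly periodic, which was already noted.
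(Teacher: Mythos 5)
Your proposal is correct and matches the paper's treatment: the paper states this result as a direct citation of Boyle--Lind (rephrased for $X_\eta$) and offers no proof, which is exactly what you do for the substantive direction, while the easy direction (finite $X_\eta$ is discrete, so every sufficiently wide strip around any line is coding) is the same immediate observation the paper leaves implicit. The only minor remark is that the Boyle--Lind existence theorem for nonexpansive subspaces does not actually require the ambient $\Z^d$-action to be expansive, so your verification of expansiveness of the shift on $X_\eta$ is harmless but unnecessary.
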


In~\cite{CK}, we further characterized the situation with a single nonexpansive line:
\begin{theorem}[Cyr and Kra~\cite{CK}]\label{thm:singlyperiodic}
Let $\eta\in\A^{\ZZ}$.  If there exists $R_{n,k}$ such that $P_{\eta}(R_{n,k})\leq nk$ and there is a unique nonexpansive line for the $\ZZ$-action by translation on $X_{\eta}$,   then $\eta$ is periodic but not doubly periodic.
\end{theorem}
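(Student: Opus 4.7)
The plan is to dispose of the ``not doubly periodic'' part using Theorem~\ref{thm:doublyperiodic} and then produce a period vector by combining the complexity bound with the hypothesis on the nonexpansive line. If $\eta$ were doubly periodic, then $X_\eta$ would be finite and Theorem~\ref{thm:doublyperiodic} would imply that there are no nonexpansive lines for the $\ZZ$-action on $X_\eta$, contradicting the existence of a nonexpansive line $\ell$. Hence $\eta$ is not doubly periodic, and in particular $X_\eta$ is infinite, so the remaining task is to exhibit a single nonzero period vector for $\eta$.

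Let $\ell$ denote the unique nonexpansive line for $X_\eta$. My approach is first to extract a uniform expansivity statement away from $\ell$: a compactness argument on the projective circle, together with expansivity of every direction distinct from $\ell$, yields constants $r_0,\delta_0 > 0$ such that coincidence of two orbits on any ``fat strip'' transverse to $\ell$ forces the orbits to agree globally. Next I would invoke the complexity hypothesis $P_\eta(R_{n,k}) \le nk$ via a pigeonhole argument in the spirit of Morse--Hedlund: as the rectangle grows, the number of distinct patterns cannot outpace the area, so for arbitrarily large rectangles $R_N$ there must exist distinct lattice translates $\vec u_N \neq \vec v_N$ with $(T^{\vec u_N}\eta)\rst{R_N} = (T^{\vec v_N}\eta)\rst{R_N}$. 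Passing to the limit inside the compact space $X_\eta$ produces distinct $f, g \in X_\eta$ that agree on an infinite region. By the uniform expansivity, this region of agreement cannot contain any fat cone avoiding $\ell$, so it must be essentially a half-plane whose boundary converges in direction to $\ell$.

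The main obstacle, and where I expect most of the work to lie, is upgrading such a coincidence into an honest period of $\eta$. The strategy is to show that the difference vectors $\vec p_N := \vec v_N - \vec u_N$ are themselves constrained to lie near $\ell$ (otherwise the agreement would propagate by the uniform expansivity and force $f=g$), and that along a subsequence they accumulate on a nontrivial $\vec p \in \ZZ$ that becomes a genuine symmetry of some element of $X_\eta$, and hence of $\eta$. The most delicate case is that of irrational slope for $\ell$: no nonzero lattice vector lies on $\ell$, so the translations $\vec p_N$ can only be \emph{nearly} parallel to $\ell$, and one must use the finiteness of the alphabet together with the complexity bound to propagate a near-coincidence along a strip into an exact coincidence on a half-plane, and then close up the argument by locking onto a lattice direction. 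I expect this rational-versus-irrational dichotomy for the slope of $\ell$ to be the crux of the proof; in the rational case one produces a period directly along $\ell$, while in the irrational case one must either derive a contradiction or show that the conclusions of Boyle--Lind combined with the sub-linear complexity rule out an irrational nonexpansive direction.
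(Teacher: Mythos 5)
This statement is quoted in the paper from~\cite{CK} and is not proved here, so the comparison must be against the substance such a proof requires; measured against that, your proposal is a plan rather than a proof, and the plan has concrete defects. The easy half is fine: Theorem~\ref{thm:doublyperiodic} does rule out double periodicity once a nonexpansive line exists. But the periodicity half is where all the content lies, and there you have three genuine gaps. First, the complexity hypothesis is a bound $P_\eta(R_{n,k})\leq nk$ for a \emph{single} pair $(n,k)$; it does not give ``the number of patterns cannot outpace the area'' for growing rectangles $R_N$, and in any case the existence of distinct translates $\vec u_N\neq\vec v_N$ with $(T^{\vec u_N}\eta)\rst{R_N}=(T^{\vec v_N}\eta)\rst{R_N}$ is automatic for any configuration (finitely many patterns, infinitely many translates), so this pigeonhole step carries no force and cannot be the place where the hypothesis enters. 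The way the bound is actually exploited is local and combinatorial: it produces a generating/balanced set $\S\subseteq R_{n,k}$ (as in Proposition~\ref{parallelprop} and Definition~\ref{def:balanced}) with an edge parallel to $\ell$, and then a one-dimensional Morse--Hedlund argument on strips parallel to $\ell$ (as in Proposition~\ref{prop:period}) forces periodicity of strip colorings, which expansivity of all other directions propagates across the plane. Second, your closing inference is wrong as stated: a nonzero vector that is a period of ``some element of $X_\eta$'' need not be a period of $\eta$; periodicity passes from $\eta$ to its orbit closure, not back, and orbit closures of aperiodic configurations routinely contain periodic points. Any correct argument must show that $\eta$ itself (or equivalently every element of $X_\eta$, via a uniform mechanism) is periodic. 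Third, the rational-versus-irrational dichotomy that you flag as the crux is left unresolved (``either derive a contradiction or show\ldots''); under the complexity bound it is settled by the generating-set geometry (Corollary~\ref{cor:rational}: a nonexpansive direction must be parallel to an edge of $\S\subseteq R_{n,k}$, hence rational), but that requires exactly the machinery your sketch does not build. So while your global outline (uniform expansivity off $\ell$, coincidence along a half-plane, upgrade to a period) points in a reasonable direction, the step from a half-plane coincidence to an honest period of $\eta$ is precisely the theorem's difficulty, and it is missing.
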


Thus Theorem~\ref{th:main} follows once we show that there can not be more than a single nonexpansive line, making its proof equivalent to showing: 
\begin{theorem}
If $\eta\colon\Z^2\to\A$ 
and there exists $R_{n,k}$ such that $P_\eta(R_{n,k})\leq nk$ for some $k\leq 3$, then 
there is at most one nonexpansive line for the dynamical system $X_\eta$.  
\end{theorem}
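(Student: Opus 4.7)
The plan is to prove the contrapositive: assuming $P_\eta(R_{n,k}) \leq nk$ for some $k \leq 3$ and some $n \in \N$, I would show that $X_\eta$ admits at most one nonexpansive line. The cases $k=1,2$ are consequences of Morse-Hedlund and Sander-Tijdeman respectively, each yielding outright periodicity and hence, via Theorems~\ref{thm:doublyperiodic} and~\ref{thm:singlyperiodic}, at most one nonexpansive line. So I would concentrate on $k=3$, where the novel work lies.

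Suppose for contradiction that $X_\eta$ has two distinct nonexpansive lines $\ell_1, \ell_2$ for the $\ZZ$-action by translation. The first step is a combinatorial translation of nonexpansivity: for each $\ell_i$ there exist arbitrarily wide strips $U_i$ around $\ell_i$ and elements $f_i \neq g_i \in X_\eta$ agreeing on $U_i$ but differing outside. Restricting to a suitably placed $R_{n,3}$ produces, for each $\ell_i$, an ``agreement region'' $S_i \subset R_{n,3}$ together with at least two distinct $\eta$-patterns of $R_{n,3}$ that coincide on $S_i$. Following the generating-set philosophy of \cite{CK} and exploiting the height-3 restriction, I would prove a sharpened counting lemma: $S_i$ extends to a generating subset $S_i^\ast \subset R_{n,3}$ (so that $R_{n,3}$-patterns are determined by their restriction to $S_i^\ast$) with $|S_i^\ast| \leq 3n - b_i$ for an integer $b_i \geq 1$ determined by the slope of $\ell_i$.

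The main step is a case analysis on the pair of slopes. If one of the lines, say $\ell_1$, is horizontal, then $\eta$ has a horizontal period and a direct Morse-Hedlund argument applied row-by-row eliminates this possibility. Otherwise both $\ell_1$ and $\ell_2$ are nonhorizontal, and since $R_{n,3}$ has height only $3$ each $\ell_i$ crosses all three rows within a horizontal window of width at most a constant $C_i$ depending on its slope. When the slopes are distinct, I would show that the branching phenomena witnessed by $\ell_1$ and $\ell_2$ can be localized in disjoint horizontal subwindows of $R_{n,3}$ for all sufficiently large $n$, so that the patterns they force are independent; the resulting count gives
\[
P_\eta(R_{n,3}) \;\geq\; (3n - b) + (b_1 + b_2) \;>\; 3n
\]
for a common generating set of size $3n - b$ with $b < b_1 + b_2$, contradicting the hypothesis.

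The principal obstacle is exactly this independence step. In the weaker regime $P_\eta(R_{n,3}) \leq 3n/2$ treated in \cite{CK}, a single branching contribution suffices and independence never enters; at the threshold $3n$ one must genuinely extract two branches and prevent them from collapsing to the same new pattern. The key input is the height-3 geometry: a nonhorizontal line of slope $s$ has all of its combinatorial action on $R_{n,3}$ concentrated in a window of horizontal width $O(1/|s|)$, so two distinct nonhorizontal slopes yield spatially separated witnesses once $n$ is large. Constructing, for any such pair of slopes, a common generating set $S^\ast \subset R_{n,3}$ of size at most $3n - 2$ on which both branchings remain simultaneously visible is where I expect the technical heart of the proof to lie.
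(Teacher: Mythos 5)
Your reduction of the cases $k=1,2$ and your opening move for $k=3$ (two nonexpansive lines force, via generating sets, edges of a convex subset of $R_{n,3}$ in the corresponding directions) are consistent with the paper, but the core of your argument has a genuine gap. First, a structural problem: the hypothesis supplies a \emph{single} $n$ with $P_\eta(R_{n,3})\leq 3n$, and this bound does not persist for larger $n$, so your plan to separate the two lines' ``combinatorial action'' into disjoint horizontal subwindows ``for all sufficiently large $n$'' has no legitimate setting. More importantly, the independence/counting step $P_\eta(R_{n,3})\geq(3n-b)+(b_1+b_2)>3n$, which you yourself flag as the technical heart, is not something that can be obtained by localizing witnesses inside one rectangle. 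Nonexpansivity of $\ell_i$ is witnessed by a pair $f_i\neq g_i\in X_\eta$ agreeing on a half-plane (or arbitrarily wide strip); the two pairs for $\ell_1$ and $\ell_2$ are in general \emph{different} elements of $X_\eta$, so the two ``branchings'' need not be visible in any single configuration, and each one only forces the existence of some pattern of $\mathcal{S}$ minus a boundary edge that extends non-uniquely --- an excess that the discrepancy bookkeeping (Lemma~\ref{removegenerated}, Corollary~\ref{cor:bound}) already absorbs without violating $P_\eta\leq 3n$. This is precisely why the paper does not eliminate two nonexpansive lines by a local count: it first proves that configurations agreeing on a half-plane bounded by a nonexpansive line are periodic in that direction (Proposition~\ref{prop:period}, via the balanced sets of Proposition~\ref{lemma:balanced}), then that the antiparallel direction is also nonexpansive (Proposition~\ref{cor:antiparallel}), which bounds the number of nonexpansive lines by three, and finally runs global propagation-of-periodicity arguments over semi-infinite convex regions (the $G_\infty$ and $K$ construction in Proposition~\ref{exactlytwo}, and the three-distinct-colorings count for $\mathcal{S}_R$ in Proposition~\ref{exactlythree}) to reach a contradiction. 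Nothing in your sketch substitutes for these steps.

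A second concrete error: your disposal of the horizontal case is wrong. A horizontal nonexpansive line does not imply that $\eta$ itself has a horizontal period --- under the standing assumption of at least two nonexpansive lines the paper shows $\eta$ is \emph{aperiodic} (see~\eqref{eq:aperiodic}); what one gets is periodicity of certain other elements $f,g\in X_\eta$ that agree on a half-plane, and only after the balanced-set machinery is in place. So the ``row-by-row Morse--Hedlund'' argument you invoke has no hypothesis to start from, and the horizontal direction in fact requires special care throughout the paper (e.g.\ the weaker balancedness conclusion in Corollary~\ref{cor:balanced} and the separate treatment of a horizontal $\ell_2$ at the end of Proposition~\ref{exactlytwo}).
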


The proof of this result occupies the remainder of the paper.

\subsection{Conventions}
Throughout the paper, we assume that $\eta\colon\ZZ\to\A$, where $\A$ 
denotes a finite alphabet with $|\A|\geq 2$ and $X_\eta=\overline{O(\eta)}$ denotes the associated 
dynamical system, endowed with the continuous transformations $T^{\vec u}$ for 
$\vec u \in\ZZ$. 
We do not explicitly mention this hypothesis again.  However, 
each time we make an assumption on the complexity, in particular the existence of $n\in\N$ 
such that $P_\eta(R_{n,3})\leq 3n$, we make this explicit. 

\section{Generating and balanced sets}
\subsection{Generating sets}
We review some definitions from~\cite{CK}, adapted to our current problem. 

If $\S\subseteq\R^2$, we denote the convex hull of $\S$ by $\conv(\S)$.  We say $\S\subseteq\ZZ$ is {\em convex} if $S=\conv(S)\cap\ZZ$ and in this case we set $\partial S$ to be the boundary of $\conv(S)$.  A {\em boundary edge} of $\S$ is an edge of the convex polygon $\partial\S$ and a {\em boundary vertex} is a vertex of $\partial\S$.  We denote the set of boundary edges by $E(\S)$ and the set of boundary vertices by $V(\S)$.  Our convention is that if $conv(\S)$ has zero area, then $E(\S)=\emptyset$.

If the area of $conv(\S)$ is positive, we orient the boundary of $\S$ positively.  This allows us to refer to a directed line as being parallel to a boundary edge of $\S$.  We say two directed lines are {\em antiparallel} if they determine the same (undirected) line, but are endowed with opposite orientations.

%%Throughout this paper we make use of different notions of the size of a set.  
If $\S\subseteq\ZZ$, then $|\S|$ denotes the number of elements of $\S$.  
%%If $w\in E(\S)$, then $\|w\|$ denotes the length of the oriented line segment $w$.
We define the {\em $\S$-words of $\eta$} to be
$$
\mathcal{W}_{\eta}(\S):=\left\{(T^{\vec u}\eta)\rst{\S}\colon\vec u\in\ZZ\right\}.
$$
Following Sander and Tijdeman~\cite{ST2}, we define the {\em $\eta$-complexity} of a set $\S\subset\ZZ$ by
$$
P_{\eta}(\S):=|\mathcal{W}_{\eta}(\S)|.
$$
As in~\cite{CK}, we define the {\em $\eta$-discrepancy function $D_\eta$} on the set of nonempty, finite subsets of $\ZZ$ by
$$
D_{\eta}(\S):=P_{\eta}(\S)-|\S|.
$$

For $W\subseteq\ZZ$, by an {\em $\eta$-coloring of $W$} we mean $(T^{\vec u}\eta)\rst{W}$ for some $\vec u\in\ZZ$.  

\begin{definition}
If $\S_1\subset\S_2\subset\ZZ$ are sets and $\alpha\in X_\eta$, we say that $\alpha\rst{\S_1}$ {\em extends uniquely to an $\eta$-coloring of $\S_2$ } if for all $\beta\in X_\eta$ such that $\alpha\rst{\S_1} = \beta\rst{\S_1}$, we have that $\alpha\rst{\S_2} = \beta\rst{\S_2}$.  Otherwise, we say that the coloring $\alpha\rst{\S_1}$ {\em extends non-uniquely to an $\eta$-coloring of $\S_2$}.  
\end{definition}

\begin{definition}
If $\S\subset\ZZ$ is a finite set, then $x\in\S$ is {\em $\eta$-generated} by $\S$ if every $\eta$-coloring of $\S\setminus\{x\}$ extends uniquely to an $\eta$-coloring of $\S$.    A nonempty, finite, convex subset of $\ZZ$ for which every boundary vertex is $\eta$-generated is called an {\em $\eta$-generating set}.
\end{definition}

We note that if $\S$ is an $\eta$-generating set and $\vec v\in\ZZ$, then $\S+\vec v$ is also an $\eta$-generating set.  Similarly if $\S$ is an $\eta$-generating set and $\alpha\in X_{\eta}$, then $\S$ is 
also an $\alpha$-generating set.

\begin{lemma}\label{removegenerated}
Suppose $\S\subset\ZZ$ is finite and convex, and $x\in V(\S)$.  If $x$ is $\eta$-generated by $\S$, then $D_{\eta}(\S\setminus\{x\})=D_{\eta}(\S)+1$.  If $x$ is not $\eta$-generated by $\S$, then $D_{\eta}(\S\setminus\{x\})\leq D_{\eta}(\S)$.
\end{lemma}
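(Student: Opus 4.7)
The plan is to reduce the lemma to a counting statement about the natural restriction map
$\rho\colon \mathcal{W}_\eta(\S)\to\mathcal{W}_\eta(\S\setminus\{x\})$ defined by $w\mapsto w\rst{\S\setminus\{x\}}$. First I would verify that $\rho$ is well-defined and \emph{surjective}: any $\eta$-word on $\S\setminus\{x\}$ has the form $(T^{\vec u}\eta)\rst{\S\setminus\{x\}}$ for some $\vec u\in\ZZ$, and this equals $\rho\bigl((T^{\vec u}\eta)\rst{\S}\bigr)$. Hence $P_\eta(\S)\geq P_\eta(\S\setminus\{x\})$ in all cases, with equality precisely when $\rho$ is a bijection, i.e.\ when no two distinct $\eta$-words on $\S$ agree after deleting the coordinate at $x$.

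Next I would translate the generating condition into injectivity of $\rho$. Using the fact recalled in Section~1.3 that for every $\alpha\in X_\eta$ and every finite $F\subset\ZZ$ there exists $\vec u\in\ZZ$ with $(T^{\vec u}\eta)\rst{F}=\alpha\rst{F}$, we obtain $\mathcal{W}_\eta(\S)=\{\alpha\rst{\S}:\alpha\in X_\eta\}$ and likewise for $\S\setminus\{x\}$. Hence the statement that every $\eta$-coloring of $\S\setminus\{x\}$ extends uniquely to an $\eta$-coloring of $\S$, in the sense of the preceding definition, is exactly the assertion that $\rho$ is injective.

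From here the conclusion is pure arithmetic. If $x$ is $\eta$-generated by $\S$, then $\rho$ is a bijection, so $P_\eta(\S\setminus\{x\})=P_\eta(\S)$; since $|\S\setminus\{x\}|=|\S|-1$, we get $D_\eta(\S\setminus\{x\})=P_\eta(\S)-(|\S|-1)=D_\eta(\S)+1$. If $x$ is not $\eta$-generated, $\rho$ remains surjective but fails to be injective, so $P_\eta(\S\setminus\{x\})\leq P_\eta(\S)-1$, and therefore $D_\eta(\S\setminus\{x\})\leq P_\eta(\S)-1-(|\S|-1)=D_\eta(\S)$. The hypotheses that $\S$ is convex and that $x\in V(\S)$ are not actually used in this argument; they are imposed only because the lemma will be applied to peel boundary vertices off generating sets, where those assumptions become relevant.

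I do not anticipate any genuine obstacle: the whole content is the translation of the closure-based notion of ``extends uniquely'' into the combinatorial statement about the fibers of $\rho$, and this translation is immediate from the density of $\mathcal{O}(\eta)$ in $X_\eta$ on finite windows. If anything is slightly delicate, it is just keeping track of the fact that a coloring realized by some $\alpha\in X_\eta$ is also realized by an honest translate of $\eta$, so that counting words in $\mathcal{W}_\eta$ and counting possible restrictions of elements of $X_\eta$ yield the same thing.
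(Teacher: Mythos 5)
Your proposal is correct and follows essentially the same route as the paper: the paper's proof is just the arithmetic $P_\eta(\S\setminus\{x\})=P_\eta(\S)$ in the generated case and $P_\eta(\S\setminus\{x\})<P_\eta(\S)$ otherwise, combined with $|\S\setminus\{x\}|=|\S|-1$ and integrality, which is exactly your counting of fibers of the restriction map (your extra care in translating ``extends uniquely'' over $X_\eta$ into injectivity on $\mathcal{W}_\eta$ via the finite-window density fact is the same observation the paper uses implicitly, and made explicit later in Corollary~\ref{cor:bound}).
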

\begin{proof}
If $x$ is $\eta$-generated by $\S$, then $P_{\eta}(\S\setminus\{x\})=P_{\eta}(\S)$.  Then
$$
D_{\eta}(\S\setminus\{x\})=P_{\eta}(\S\setminus\{x\})-|\S|+1=P_{\eta}(\S)-|\S|+1=D_{\eta}(\S)+1.
$$
If $x$ is not $\eta$-generated by $\S$, then $P_{\eta}(\S\setminus\{x\})<P_{\eta}(\S)$.  Thus
$$
D_{\eta}(\S\setminus\{x\})=P_{\eta}(\S\setminus\{x\})-|\S|+1<P_{\eta}(\S)-|S|+1=D_{\eta}(\S)+1.
$$
Since $D_{\eta}(\S\setminus\{x\})$ and $D_{\eta}(\S)$ are both integers, $D_{\eta}(\S\setminus\{x\})\leq D_{\eta}(\S)$.
\end{proof}

\begin{corollary}\label{amountincrease}
Suppose $\S\subset\ZZ$ is finite and convex and $p_1,\dots,p_j\in\S$ are points such that for all $1\leq i\leq j$, we have $\S\setminus\{p_1,p_2,\dots,p_i\}$ is convex.  Then $D_{\eta}(\S\setminus\{p_1,\dots,p_j\})\leq D_{\eta}(\S)+j$.
\end{corollary}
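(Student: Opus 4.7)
The plan is to prove this by induction on $j$, peeling off the points $p_1,\dots,p_j$ one at a time and applying Lemma~\ref{removegenerated} at each step. Write $\S_0:=\S$ and $\S_i:=\S\setminus\{p_1,\dots,p_i\}$ for $1\leq i\leq j$. The hypothesis guarantees that each $\S_i$ is convex, and $\S_i=\S_{i-1}\setminus\{p_i\}$. So in order to invoke Lemma~\ref{removegenerated} on the pair $(\S_{i-1},p_i)$, I need only check that $p_i\in V(\S_{i-1})$.

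The verification that $p_i$ is a boundary vertex of $\S_{i-1}$ is the one nontrivial point, and it is a short geometric observation. Since $\S_{i-1}$ and $\S_i=\S_{i-1}\setminus\{p_i\}$ are both convex subsets of $\ZZ$ in the sense of the paper, $\S_i=\conv(\S_i)\cap\ZZ$. If $p_i$ lay in $\conv(\S_i)$, then since $p_i\in\ZZ$ we would have $p_i\in\conv(\S_i)\cap\ZZ=\S_i$, contradicting $p_i\notin\S_i$. Hence $p_i\notin\conv(\S_i)$, which forces $p_i$ to be an extreme point (i.e.\ a vertex) of $\conv(\S_{i-1})$, so $p_i\in V(\S_{i-1})$.

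Once this is in hand, Lemma~\ref{removegenerated} applied to $(\S_{i-1},p_i)$ yields
\[
D_\eta(\S_i)\leq D_\eta(\S_{i-1})+1,
\]
regardless of whether $p_i$ is $\eta$-generated by $\S_{i-1}$ or not (the $\eta$-generated case gives equality, the non-$\eta$-generated case gives a strict inequality). Iterating this bound from $i=1$ to $i=j$ gives $D_\eta(\S\setminus\{p_1,\dots,p_j\})=D_\eta(\S_j)\leq D_\eta(\S_0)+j=D_\eta(\S)+j$, which is the claim. I do not expect any real obstacle here: the only subtlety is the lattice-convexity observation that removing a point from a convex lattice set and remaining convex forces that point to be a vertex, and the rest is immediate telescoping.
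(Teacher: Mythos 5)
Your proof is correct and is essentially the argument the paper intends (the corollary is stated without proof precisely because it follows by iterating Lemma~\ref{removegenerated} in this telescoping way). Your extra observation that lattice convexity of $\S_{i-1}$ and of $\S_{i-1}\setminus\{p_i\}$ forces $p_i\in V(\S_{i-1})$ is exactly the point needed to apply the lemma as stated, so nothing is missing.
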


\subsection{Nonexpansivity}
We reformulate the definition of expansive, and more importantly nonexpansive, in the context of a particular configuration $\eta$.  While this is a priori weaker than Boyle and Lind's definition of expansiveness introduced in Section~\ref{sec:intro-expansive}, it is easy to check that they are equivalent in the symbolic setting: 
\begin{definition}\label{def:nonexpansive}
A line  $\ell\subset\R^2$ is a {\em nonexpansive line for $\eta$} (or just a {\em nonexpansive line} 
when $\eta$ is clear from the context) if for all $r\in\R$, there exist $f_r,g_r\in X_{\eta}$ such that $f_r\neq g_r$, but 
\begin{equation*}
\text{$f_r(\vec u)=g_r(\vec u)$ for all $\vec u\in\ZZ$ such that $d(\vec u,\ell)<r$.}
\end{equation*}
We say that $\ell$ is an {\em expansive line for $\eta$} (or just an {\em expansive line}) if it is not a nonexpansive line.

If $\ell$ is a directed line, let $H(\ell)\subset\R^2$ be the half-plane whose (positively oriented) boundary passes through the origin and is parallel to $\ell$.  We say that a directed line $\ell$ is a {\em nonexpansive direction for $\eta$} (or just a {\em nonexpansive direction} when $\eta$ is clear from the context) if there exist $f,g\in X_{\eta}$ such that $f\neq g$ but $f\rst{H(\ell)}=g\rst{H(\ell)}$.  We say $\ell$ is an {\em expansive direction for $\eta$} (or just an {\em expansive direction}) if it is not a nonexpansive 
direction for $\eta$.
\end{definition}

\begin{remark}\label{remark:translationinvariant}
Notice that the set of expansive lines (similarly expansive directions, nonexpansive lines, and nonexpansive directions) is invariant under translations in $\R^2$.
\end{remark}

We summarize properties of generating sets proved in~\cite{CK} that we use here (for completeness 
we include proofs):
\begin{proposition}[\cite{CK}, Lemmas 2.3 and 3.3]\label{parallelprop}
\label{prop:generating-sets}
Suppose there exists $n\in\N$ such that $P_{\eta}(R_{n,3})\leq3n$.  Then there exists an $\eta$-generating set $\S\subseteq R_{n,3}$ and
\begin{align}
&\text{if $\S^{\prime}\subset\S$ is nonempty and convex then $D_{\eta}(\S^{\prime})\geq D_{\eta}(\S)+1$.} \label{eq:subset}
\end{align}
Moreover, for any nonexpansive direction $\ell$, there is a boundary edge $w_{\ell}\in E(\S)$ that is parallel to $\ell$.
\end{proposition}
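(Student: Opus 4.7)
The plan is to construct $\S$ as a minimum-discrepancy convex subset of $R_{n,3}$ (of minimum cardinality among such minimizers), read off the two properties of the first part from this choice, and, for the moreover statement, argue by contradiction using a compactness argument to upgrade nonexpansivity of $\ell$ into a direct violation of the generating-set property unless $\S$ has an edge parallel to $\ell$.

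The hypothesis $P_\eta(R_{n,3})\leq 3n$ gives $D_\eta(R_{n,3})\leq 0$, so among the finitely many nonempty convex subsets of $R_{n,3}$ there is a minimum value of $D_\eta$. Choose $\S\subseteq R_{n,3}$ achieving this minimum and, among those achieving it, one of minimum cardinality. Property~\eqref{eq:subset} is then immediate: a nonempty convex proper subset $\S'\subset\S$ cannot satisfy $D_\eta(\S')<D_\eta(\S)$ by minimality of the discrepancy, nor $D_\eta(\S')=D_\eta(\S)$ by minimality of the cardinality, so integrality of $D_\eta$ forces $D_\eta(\S')\geq D_\eta(\S)+1$. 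In the non-constant case (the only one of interest) this forces $|\S|\geq 2$; then for any $x\in V(\S)$ the set $\S\setminus\{x\}$ is a nonempty convex proper subset of $\S$, and were $x$ not $\eta$-generated, Lemma~\ref{removegenerated} would give $D_\eta(\S\setminus\{x\})\leq D_\eta(\S)$, contradicting~\eqref{eq:subset}. Hence $\S$ is an $\eta$-generating set.

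For the moreover statement, suppose for contradiction that $\ell$ is a nonexpansive direction no parallel of which is a boundary edge of $\S$. Fix a normal $\vec n$ with $H(\ell)=\{\vec u:\langle \vec u,\vec n\rangle\geq 0\}$. Since no boundary edge of $\S$ is parallel to $\ell$, the functional $\langle\cdot,\vec n\rangle$ attains its minimum over $\S$ at a unique vertex $x$, and $\langle y-x,\vec n\rangle>0$ for every $y\in\S\setminus\{x\}$. The plan is to produce $f_\infty,g_\infty\in X_\eta$ that agree at every lattice point $\vec u$ with $\langle \vec u,\vec n\rangle>0$ but disagree at $\vec 0$; after translating $\S$ so that $x\mapsto\vec 0$, this gives two elements of $X_\eta$ agreeing on $(\S-x)\setminus\{\vec 0\}$ but disagreeing at $\vec 0$, contradicting the fact that $x$ is $\eta$-generated by $\S$. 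To construct the pair, start with $f,g\in X_\eta$ distinct with $f=g$ on some translate of $H(\ell)$, let $D=\{\vec u:f(\vec u)\neq g(\vec u)\}$, and put $c^*=\sup_{\vec d\in D}\langle \vec d,\vec n\rangle$, which is finite because $D$ avoids a half-plane parallel to $\ell$. Choose $\vec d_k\in D$ with $\langle \vec d_k,\vec n\rangle\to c^*$; by compactness of $X_\eta$ and finiteness of $\A$, a subsequence makes $T^{\vec d_k}f$ and $T^{\vec d_k}g$ converge in $X_\eta$ to some $f_\infty,g_\infty$, while $f(\vec d_k)$ and $g(\vec d_k)$ stabilize at distinct letters. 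The pair $(f_\infty,g_\infty)$ then has the required agreement (by the definition of $c^*$) and disagreement at $\vec 0$ (by the stabilized distinct values).

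The main obstacle is this limiting step. The definition of a nonexpansive direction only promises agreement on some half-plane whose boundary can sit arbitrarily far above the closest disagreement point of $f$ and $g$, whereas the contradiction requires a disagreement on (or immediately adjacent to) the boundary line of such a half-plane. Extracting such a tight pair is exactly what the compactness-plus-finite-alphabet argument above accomplishes.
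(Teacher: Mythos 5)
Your proof is correct, and its first half is essentially the paper's argument: you select $\S$ by a minimality criterion (minimum discrepancy, then minimum cardinality, versus the paper's inclusion-minimal convex subset of $R_{n,3}$ with nonpositive discrepancy), and both choices yield~\eqref{eq:subset} directly from minimality and the generating property via Lemma~\ref{removegenerated}. The difference lies in the ``moreover'' part. The paper works with the original pair $f,g$ agreeing on a half-plane: it takes the supremum $x_{\max}$ of translates of $\ell$ meeting the disagreement set, and when that supremum is not attained it picks a disagreement point within $d/2$ of the extremal line (where $d$ is the distance from the distinguished vertex $x_\ell$ to a separating line parallel to $\ell$), so that a translate of $\S$ still places $\S\setminus\{x_\ell\}$ inside the agreement region; no compactness is used. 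You instead pass to a limit along a sequence of disagreement points extremal for the linear functional $\langle\cdot,\vec n\rangle$, producing a tight pair $f_\infty,g_\infty\in X_\eta$ agreeing on the open half-plane $\{\langle\vec u,\vec n\rangle>0\}$ and disagreeing at the origin, after which the contradiction with the generated vertex (the unique minimizer of $\langle\cdot,\vec n\rangle$ on $\S$, which exists precisely because no edge of $\S$ is parallel to $\ell$ under the paper's orientation conventions) is immediate and requires no case split. Your compactness step is the same device the paper uses in Proposition~\ref{cor:linedirection}, so nothing outside the paper's toolkit is needed; what it buys is a cleaner single application of the generating property, at the cost of an extraction argument, while the paper's version stays purely finitary. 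One cosmetic remark: your observation that non-constancy of $\eta$ forces $|\S|\geq2$ plays the role of the paper's remark that a singleton has discrepancy $|\A|-1>0$; either way this point should be stated, as it is what makes $\S\setminus\{x\}$ nonempty.
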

\begin{proof}
By assumption, $D_{\eta}(R_{n,3})\leq0$.  Let $\S\subseteq R_{n,3}$ be a convex set which is minimal (with respect to the partial ordering by inclusion) among all convex subsets of $R_{n,3}$ whose discrepancy is nonpositive.  
Since $|\A|\geq 2$, the discrepancy of a set with a single element is $|\A|-1>0$, 
and so $\S$ contains at least two elements.  In particular for any $x\in V(\S)$, the set $\S\setminus\{x\}$ is nonempty and convex.  If $x\in V(\S)$ is not $\eta$-generated by $\S$, then $D_{\eta}(\S\setminus\{x\})\leq D_{\eta}(\S)$ by Lemma~\ref{removegenerated}.  Therefore, by minimality of $\S$, if $x\in V(\S)$ then $x$ is $\eta$-generated by $\S$.  This establishes that $\S$ is an $\eta$-generating set.  Claim~\eqref{eq:subset} follows from the minimality of $\S$.

Finally, suppose $\ell$ is a directed line that is not parallel to any of the edges of $\S$.  Without loss of generality, we can assume that $\ell$ points either southwest or south.  We claim that $\ell$ is expansive for $\eta$, thereby  establishing the second part of the proposition.  

Suppose this does not hold.  Let $H\subset\R^2$ be a half-plane whose (positively oriented) boundary edge is parallel to $\ell$.  Let $\ell_0$ be the translation of $\ell$ that passes through $(0,0)$ and for all $x\in\R$, set $\ell_x:=\ell_0+(x,0)$.  Since $\ell$ is nonexpansive for $\eta$, there exist $f,g\in X_\eta$ such that $f\neq g$ but $f\rst{H}=g\rst{H}$.  Let $A:=\{\vec u\in\ZZ\colon f(\vec u)\neq g(\vec u)\}$ and set
$$
x_{\max}:=\sup\{x\in\R\colon \ell_x\cap A\neq\emptyset\}.
$$
Since $f\rst{H}=g\rst{H}$ and $\ell$ points southwest or south, we have that $x_{\max}<\infty$.   
Since $\ell$ is not parallel to any of the edges of $\S$, there is a vertex $x_{\ell}\in V(\S)$ and a half-plane whose boundary is parallel to $\ell$ such that $\S\setminus\{x_{\ell}\}$ is contained in this half-plane but $x_{\ell}$ is not.  If $\ell_{x_{\max}}\cap A\neq\emptyset$, let $\vec u_{\max}\in\ell_{x_{\max}}\cap A$.  There is a translation of $\S$ that takes $x_{\ell}$ to $u_{\max}$ and $\S\setminus\{x_{\ell}\}$ is translated to the region on which $f$ and $g$ coincide.  But this is a contradiction of the fact that 
$\S$ is $\eta$-generating, as $x_{\ell}$ is $\eta$-generated by $\S$.  If  instead $\ell_{x_{\max}}\cap A=\emptyset$ let $d$ be the distance from $x_{\ell}$ to the half-plane separating $x_{\ell}$ from $\S\setminus\{x_{\ell}\}$.  Let $\vec u\in A$ be a point such that $d(\vec u,\ell_{x_{\max}})<d/2$.  Then there is again a translation of $\S$ taking $x_{\ell}$ to $\vec u$ and $\S\setminus\{x_{\ell}\}$ is translated to the region on which $f$ and $g$ coincide.  Once again, this is
a contradiction of $x_\ell$ being $\eta$-generated.  Thus $\ell$ is an expansive direction for $\eta$, 
completing the proof.
\end{proof}

\begin{corollary}\label{generatingequation}
Suppose there exists $n\in\N$ such that $P_{\eta}(R_{n,3})\leq3n$ and $\S$ is the $\eta$-generating set constructed in Proposition~\ref{parallelprop}.  Then for any $w\in E(\S)$, we have 
$$
D_{\eta}(\S\setminus w)\geq D_{\eta}(\S)+1.
$$
\end{corollary}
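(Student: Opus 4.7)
My plan is to deduce this directly from the subset inequality~\eqref{eq:subset} of Proposition~\ref{parallelprop}. That inequality asserts $D_\eta(\S') \geq D_\eta(\S) + 1$ whenever $\S' \subset \S$ is a nonempty (proper) convex subset, so the corollary will follow at once if I can verify that $\S \setminus w$ is a nonempty, convex, proper subset of $\S$.

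Properness is immediate, because the boundary edge $w$ has two distinct endpoint vertices and so contains at least two lattice points; hence $\S \setminus w$ is strictly smaller than $\S$. Nonemptiness is nearly as easy: the hypothesis $w \in E(\S)$ forces, by the convention that convex sets with zero-area hulls have no boundary edges, that $\conv(\S)$ is nondegenerate, so it has at least one vertex off $w$, and that vertex lies in $\S \setminus w$.

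The only point requiring verification is convexity of $\S \setminus w$ in the paper's sense, namely the equality $\S \setminus w = \conv(\S \setminus w) \cap \ZZ$. Here I would let $H$ be the open half-plane bounded by the line through $w$ that meets the interior of $\conv(\S)$. Since $w$ is a boundary edge of the convex polygon $\conv(\S)$, the polygon lies in $\overline{H}$ and the lattice points of $\S$ on the line through $w$ are exactly those of $w$; therefore $\S \setminus w = \S \cap H$. Because $H$ is convex and contains $\S \setminus w$, we have $\conv(\S \setminus w) \subset H$, and any lattice point of $\conv(\S \setminus w) \subset \conv(\S)$ then lies in $\S \cap H = \S \setminus w$. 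With convexity in hand, \eqref{eq:subset} immediately yields $D_\eta(\S \setminus w) \geq D_\eta(\S) + 1$. I do not expect a genuine obstacle here; the substantive work was already done in Proposition~\ref{parallelprop}, and the corollary reduces to the elementary geometric observation that removing a boundary edge from a convex lattice set leaves a convex lattice set.
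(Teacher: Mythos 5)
Your proposal is correct and follows the same route as the paper: the paper's proof is exactly the observation that $w\in E(\S)$ forces $\conv(\S)$ to have positive area, after which~\eqref{eq:subset} applies to $\S\setminus w$. The only difference is that you spell out the (elementary but implicit) verification that $\S\setminus w$ is a nonempty, proper, convex lattice subset, which the paper leaves to the reader; your half-plane argument for this is sound.
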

\begin{proof}
If $E(\S)\neq\emptyset$, then $\conv(\S)$ has positive area (recall our convention that if $\conv(\S)$ has zero area then the edge set is empty), and so by~\eqref{eq:subset} we are done.
\end{proof}

\begin{corollary}\label{cor:rational}
Suppose there exists $n\in\N$ such that $P_{\eta}(R_{n,3})\leq3n$.  If $\ell$ is a nonexpansive direction for $\eta$, then there is a translation of $\ell$ that intersects $R_{n,3}$ in at least two places.  In particular, if $\ell$ has irrational slope, then $\ell$ is an expansive direction for $\eta$.
\end{corollary}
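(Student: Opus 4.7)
The plan is to deduce the corollary directly from Proposition~\ref{parallelprop}, which produces an $\eta$-generating set $\S \subseteq R_{n,3}$ with the key property that every nonexpansive direction is parallel to some boundary edge of $\S$.

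First, suppose $\ell$ is a nonexpansive direction for $\eta$. By Proposition~\ref{parallelprop}, there is a boundary edge $w_\ell \in E(\S)$ parallel to $\ell$. This edge has two distinct endpoints in $V(\S) \subseteq \S \subseteq R_{n,3} \cap \ZZ$, so in particular it contains at least two lattice points of $R_{n,3}$. The line through these two points is parallel to $\ell$ (because it contains $w_\ell$), hence it is a translation of $\ell$. This translation meets $R_{n,3}$ in at least these two points, proving the first assertion.

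For the second assertion, suppose for contradiction that $\ell$ is a nonexpansive direction with irrational slope. Any line with irrational slope contains at most one point of $\ZZ$: if $(a_1,b_1)$ and $(a_2,b_2)$ were two distinct lattice points on such a line, then either $a_1 = a_2$ (giving a vertical line, whose slope is not irrational in any reasonable sense) or the slope would equal the rational number $(b_2-b_1)/(a_2-a_1)$, a contradiction. Hence no translation of $\ell$ can meet $R_{n,3} \cap \ZZ$ in two places, contradicting what we just proved. Therefore $\ell$ must be expansive.

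There is no real obstacle here: the work is essentially a bookkeeping consequence of Proposition~\ref{parallelprop}, together with the elementary observation that lines of irrational slope carry at most one lattice point. The only subtlety is confirming that a boundary edge of the convex lattice set $\S$ always contains at least two lattice points, which is immediate from the fact that its endpoints lie in $V(\S) \subset \ZZ$.
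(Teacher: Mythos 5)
Your proof is correct and follows essentially the same route as the paper: both invoke Proposition~\ref{parallelprop} to obtain a boundary edge of the generating set $\S\subseteq R_{n,3}$ parallel to $\ell$, whose two endpoints are integer points of $R_{n,3}$, and the irrational-slope statement is then the standard observation that a line through two lattice points has rational (or vertical) direction.
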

\begin{proof}
By Proposition~\ref{parallelprop}, there exists an $\eta$-generating set $\S\subseteq R_{n,3}$ and for any nonexpansive direction $\ell$, there is an edge $w_{\ell}\in E(\S)$ parallel to $\ell$.  The two endpoints of $w_{\ell}$ are both boundary vertices of $\S$, and so in particular are integer points in $R_{n,3}$.
\end{proof}

\begin{proposition}\label{cor:linedirection}
Suppose there exists $n\in\N$ such that $P_{\eta}(R_{n,3})\leq3n$.  If $\ell$ is a nonexpansive line for $\eta$, then at least one of the orientations on $\ell$ determines a nonexpansive direction for $\eta$.  If $\tilde{\ell}$ is an expansive line for $\eta$, then both orientations on $\tilde{\ell}$ determine expansive directions for $\eta$.
\end{proposition}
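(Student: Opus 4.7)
Statement (ii), that any expansive line has both orientations expansive, is the contrapositive of the implication that a nonexpansive direction forces the underlying line to be nonexpansive. To prove this, I would use Remark~\ref{remark:translationinvariant} to translate $\tilde\ell$ through the origin, then given a distinct pair $f, g \in X_\eta$ agreeing on $H(\tilde\ell)$, translate the pair deep into $H(\tilde\ell)$ by an integer vector at distance at least $r$ from $\tilde\ell$. The translated pair still lies in $X_\eta$, remains distinct, and agrees on the corresponding translate of $H(\tilde\ell)$, which contains the strip $\{\vec v : d(\vec v, \tilde\ell) < r\}$. Since $r$ is arbitrary, this witnesses nonexpansiveness of $\tilde\ell$ as a line.

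For statement (i), the idea is to produce a pair $F, G \in X_\eta$ agreeing on a closed half-plane bounded by $\ell$ (i.e., making one orientation of $\ell$ into a nonexpansive direction) as a subsequential limit of the pairs $(f_r, g_r)$ witnessing the nonexpansiveness of $\ell$. After translating so $\ell$ passes through the origin, fix a unit normal $n$ and set $H^{\pm} = \{\vec v : \pm \vec v \cdot n > 0\}$. The key step is to translate each pair $(f_r, g_r)$ by a disagreement point $\vec u_r$ whose distance to $\ell$ is within $1/r$ of the infimum of disagreement-to-$\ell$ distances; that infimum is at least $r$, but need not be attained. After this translation, $F_r := T^{\vec u_r} f_r$ and $G_r := T^{\vec u_r} g_r$ differ at the origin, and agree on a strip centered on the far side of $\ell$ whose width grows with $r$. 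A pigeonhole subsequence places all $\vec u_r$ on the same side $H^-$ of $\ell$, so the agreement region sits in $H^+$ and exhausts $H^+$ as $r \to \infty$.

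Compactness of $X_\eta$ in the product topology then yields subsequential limits $F_r \to F$ and $G_r \to G$ in $X_\eta$; discreteness of $\A$ forces $F(\vec 0) \neq G(\vec 0)$ and $F = G$ on every integer point of $H^+$. A final translation by some $\vec w \in H^+ \cap \Z^2$ converts agreement on the open half-plane into agreement on the closed half-plane $\{\vec v : \vec v \cdot n \geq 0\}$ (while shifting the disagreement into $H^-$). That closed half-plane is $H(\ell)$ for precisely one of the two orientations of $\ell$, establishing that that orientation is a nonexpansive direction.

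The main technical obstacle is that for lines $\ell$ of irrational slope, the values $\{|\vec u \cdot n| : \vec u \in \Z^2\}$ are dense in $[0, \infty)$, so the closest disagreement to $\ell$ may not exist; this is why the distance approximation is needed rather than a true minimum. The $O(1/r)$ slack this introduces is absorbed in the compactness limit, and otherwise the argument is a straightforward combination of translation invariance and compactness.
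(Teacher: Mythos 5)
Your argument is correct, and its skeleton coincides with the paper's: select disagreement points of (nearly) minimal distance to $\ell$, pigeonhole them onto one side, translate them to the origin, take limits using compactness of $X_\eta$ and discreteness of $\A$, and finish with one more translation to pass from an open to a closed half-plane; part (ii) is in both cases the one-line observation that a half-plane contains arbitrarily wide strips around translates of $\ell$, made rigorous by translating the witnessing pair. The genuine difference is how the extremal disagreement point is obtained. The paper first invokes Corollary~\ref{cor:rational} (which uses the hypothesis $P_\eta(R_{n,3})\leq 3n$) to conclude that $\ell$ is rational, then conjugates by an element of $SL_2(\Z)$ so that $\ell$ becomes vertical; after this reduction the relevant coordinate of a disagreement point is an integer, so a true minimizer exists and the pigeonhole/limit argument runs with exact minimality. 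You instead work with the line in its original (possibly irrational) position and replace the minimum by a point within $1/r$ of the infimum of $\{|\vec u\cdot n|\colon f_r(\vec u)\neq g_r(\vec u)\}$, absorbing the $O(1/r)$ slack in the limit. This buys generality: your proof never uses the complexity hypothesis, so it shows that for an arbitrary $\eta$ (and arbitrary, even irrational, nonexpansive lines) at least one orientation of a nonexpansive line is a nonexpansive direction, whereas the paper's version is tied to the standing assumption through the rationality reduction. What the paper's route buys is bookkeeping convenience: after the $SL_2(\Z)$ change of coordinates everything is phrased in terms of vertical strips $0<x\leq r_i$, which matches the coordinates used in the later arguments (e.g.\ Proposition~\ref{prop:period}), and exact minimality slightly simplifies the description of the agreement region. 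One small point worth stating explicitly in your write-up, which you use implicitly: by definition of the infimum, \emph{every} integer point whose distance to $\ell$ is strictly less than that infimum is an agreement point of $f_r,g_r$; this is what makes the translated pair agree on the full strip $\{1/r<\vec x\cdot n<2\delta_r\}$ rather than merely near the chosen point.
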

\begin{proof}
If $\ell$ is a nonexpansive line, then for all $r>0$ there exist $f_r,g_r\in X_{\eta}$ such that $f_r(\vec x)=g_r(\vec x)$ whenever $d(\vec x,\ell)<r$ but $f_r\neq g_r$.  By Corollary~\ref{cor:rational},
$\ell$ is a rational line.  By Remark~\ref{remark:translationinvariant}, 
there is no loss of generality in assuming that $\ell$ passes through the origin.  Choose $A\in SL_2(\Z)$ such that $A(\ell)$ points vertically downward.  Let $\tilde{\eta}:=\eta\circ A^{-1}$, $\tilde{f}_r:=f_r\circ A^{-1}$, and $\tilde{g}_r:=g_r\circ A^{-1}$.  
For each integer $r>0$, choose $\vec x_r=(x_r,y_r)$ such that $\tilde{f}_r(\vec x_r)\neq \tilde{g}_r(\vec x_r)$ and such that $|y_r|$ is minimal among all integer points where $\tilde{f}_r$ and $\tilde{g}_r$ differ.  By the pigeonhole principle, either $x_r>0$ infinitely often or $x_r<0$ infinitely often.  Without loss say $x_r<0$ infinitely often and pass to a subsequence $r_1<r_2<\ldots$ such that $x_{r_i}<0$ for all $i=1,2,\ldots$  Then $(T^{(x_{r_i},y_{r_i})}\tilde{f}_{r_i})(x,y)=(T^{(x_{r_i},y_{r_i})}\tilde{g}_{r_i})(x,y)$ for all $(x,y)\in\ZZ$ such that $0<x\leq r_i$, 
but $(T^{(x_{r_i},y_{r_i})}\tilde{f}_{r_i})(0,0)\neq(T^{(x_{r_i},y_{r_i})}\tilde{g}_{r_i})(0,0)$.

Since $X_{\eta}$ is compact, the sequence $\left\{\tilde{f}_{r_i}\right\}_{i=1}^{\infty}$ has an accumulation point $\tilde{f}_{\infty}$.  By passing to a subsequence, which we denote 
using the same sequence $\{r_{i}\}_{i=1}^{\infty}$,  we can assume that 
$\lim_{i\to\infty}\tilde{f}_{r_{i}}=\tilde{f}_{\infty}$.
Again by compactness, the sequence $\{\tilde{g}_{r_{i}}\}_{i=1}^{\infty}$ has an accumulation point $\tilde{g}_{\infty}$.  Again passing to a subsequence, which we continue to denote by the same sequence, we can assume that 
$\lim_{i\to\infty}\tilde{g}_{r_{i}}=\tilde{g}_{\infty}$. 
Then by construction, $\tilde{f}_{\infty}(x,y)=\tilde{g}_{\infty}(x,y)$ for all $(x,y)\in\ZZ$ such that $x>0$, but $\tilde{f}_{\infty}(0,0)\neq\tilde{g}_\infty(0,0)$.  Thus the vertical direction with downward orientation is a nonexpansive direction for $\tilde{\eta}$.  Therefore, the orientation on $\ell$ inherited from the downward orientation on $A(\ell)$ is nonexpansive for $\eta$.

Since half-planes contain arbitrarily wide strips, the second part of the proposition is immediate. 
\end{proof}

The corollary shows that if $\ell$ is a nonexpansive line for $\eta$, then there is an orientation on $\ell$ that determines a nonexpansive direction for $\eta$.  We do not know, a priori, that both orientations on $\ell$ determine nonexpansive directions for $\eta$.  In the sequel, this is a significant hurdle: we 
put considerable effort into the construction of particular sets (Proposition~\ref{lemma:balanced}) which can be used to show (Proposition~\ref{cor:antiparallel}) that when there exists $n\in\N$ such that $P_{\eta}(R_{n,3})\leq 3n$, it is indeed the case that {\em both} orientations of a nonexpansive line for $\eta$ determine nonexpansive directions.

\begin{corollary}\label{cor:bound}
Suppose there exists a finite, convex set $\S\subset\ZZ$ and an edge $w\in E(\S)$ such that
$$
D_{\eta}(\S\setminus w)>D_{\eta}(\S).
$$
Then for any $w\in E(\S)$, there are at most $|w\cap\S|-1$ $\eta$-colorings of $\S\setminus w$ that do not extend uniquely to an $\eta$-coloring of $\S$.
\end{corollary}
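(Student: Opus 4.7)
The plan is to obtain the bound by a single double-counting argument that converts the hypothesis on discrepancy into the desired upper bound on non-uniquely-extending colorings.

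First I would fix an arbitrary edge $w \in E(\S)$ and split the $\eta$-colorings of $\S \setminus w$ into the $a$ colorings that extend uniquely to an $\eta$-coloring of $\S$ and the $b$ colorings that do not, so that $P_\eta(\S \setminus w) = a + b$. Every $\eta$-coloring of $\S$ restricts to some $\eta$-coloring of $\S \setminus w$, and by definition a uniquely-extending coloring has exactly one preimage under this restriction map, while a non-uniquely-extending coloring has at least two. Summing preimages gives
\begin{equation*}
P_\eta(\S) \;\geq\; a + 2b \;=\; P_\eta(\S \setminus w) + b.
\end{equation*}

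Next I would translate into the language of discrepancy. Using $|\S \setminus w| = |\S| - |w \cap \S|$, this rearranges to
\begin{equation*}
b \;\leq\; P_\eta(\S) - P_\eta(\S \setminus w) \;=\; D_\eta(\S) - D_\eta(\S \setminus w) + |w \cap \S|.
\end{equation*}
The hypothesis $D_\eta(\S \setminus w) > D_\eta(\S)$ upgrades, since $D_\eta$ is integer-valued, to $D_\eta(\S \setminus w) \geq D_\eta(\S) + 1$, and substituting collapses the bound to $b \leq |w \cap \S| - 1$.

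The main point requiring attention, and the reason this qualifies as more than a one-line manipulation, is the quantifier structure of the corollary: the double-counting is strictly local to the edge to which the discrepancy hypothesis is applied, so in order to conclude the bound ``for any $w \in E(\S)$'' the inequality $D_\eta(\S \setminus w) \geq D_\eta(\S) + 1$ must be available for \emph{every} boundary edge and not merely the one named in the hypothesis. This is exactly the setting in which the corollary will be invoked: combined with Corollary~\ref{generatingequation}, which supplies $D_\eta(\S \setminus w) \geq D_\eta(\S) + 1$ simultaneously for every $w \in E(\S)$ whenever $\S$ is the $\eta$-generating set from Proposition~\ref{parallelprop}, the per-edge argument above then delivers the claimed extension bound uniformly across all of $E(\S)$.
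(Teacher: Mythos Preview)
Your argument is correct and coincides with the paper's proof: both bound the number of non-uniquely-extending colorings by $P_\eta(\S)-P_\eta(\S\setminus w)$ via the surjectivity of the restriction map (your $a+2b\le P_\eta(\S)$ is exactly this count), and then convert the discrepancy hypothesis into $P_\eta(\S)\le P_\eta(\S\setminus w)+|w\cap\S|-1$. Your observation about the quantifier structure is also on point: the paper's proof, like yours, only treats the edge $w$ for which the hypothesis is assumed, so the phrase ``for any $w\in E(\S)$'' in the statement should be read as applying to that same edge (and indeed this is how the corollary is invoked later, always with the discrepancy inequality already in hand for the edge in question).
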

\begin{proof}
Since $|\S\setminus w|=|\S|-|w\cap\S|$,
$$
P_{\eta}(\S\setminus w)-|\S|+|w\cap\S|=D_{\eta}(\S\setminus w)>D_{\eta}(\S)=P_{\eta}(\S)-|\S|.
$$
Therefore $P_{\eta}(\S)\leq P_{\eta}(\S\setminus w)+|w\cap\S|-1$.  On the other hand, defining $\pi\colon \mathcal{W}_{\eta}(\S)\to\mathcal{W}_{\eta}(\S\setminus w)$ to be the natural restriction,  the number of $\eta$-colorings of $\S\setminus w$ that extend non-uniquely to an $\eta$-coloring of $\S$ is the number of points in $\mathcal{W}_{\eta}(\S\setminus w)$ whose preimage under $\pi$ contains more than one element.  Since $\pi$ is surjective, this is at most $|\mathcal{W}_{\eta}(\S)|-|\mathcal{W}_{\eta}(\S\setminus w)|$.  In other words, it is at most $P_{\eta}(\S)-P_{\eta}(\S\setminus w)$.
\end{proof}

\begin{corollary}\label{modifiedgeneratingcor}
Suppose there exists $n\in\N$ such that $P_{\eta}(R_{n,3})\leq3n$.  If $\ell$ is a nonexpansive direction for $\eta$, $\T\subset\ZZ$ is a finite set, and $x\in V(\T)$ is $\eta$-generated by $\T$, then there is no translation of $\ell$ that separates $x$ from $\conv(\T\setminus\{x\})$.
\end{corollary}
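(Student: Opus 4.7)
The plan is to argue by contradiction: assume a translation $\ell'$ of $\ell$ separates $x$ from $\conv(\T\setminus\{x\})$, and construct $h_1,h_2\in X_\eta$ with $h_1\rst{\T\setminus\{x\}}=h_2\rst{\T\setminus\{x\}}$ but $h_1(x)\neq h_2(x)$, contradicting the hypothesis that $x$ is $\eta$-generated by $\T$.

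First, apply a rotation by an element of $SL_2(\Z)$ so that $\ell$ points east; then $H(\ell)=\{y\geq 0\}$ and $\ell'$ is horizontal at some height $v_y$.  I focus on the configuration in which $\T\setminus\{x\}$ lies on the $H(\ell)$-side of $\ell'$ (strictly above) and $x$ on the opposite side (strictly below), so $x_y<v_y<u_y$ for every $u\in\T\setminus\{x\}$; since all coordinates are integers this forces $u_y\geq x_y+1$ for every such $u$.

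Using that $\ell$ is a nonexpansive direction, choose $f,g\in X_\eta$ with $f\neq g$ and $f\rst{H(\ell)}=g\rst{H(\ell)}$, and let $A:=\{\vec u\in\ZZ : f(\vec u)\neq g(\vec u)\}\subseteq\{y\leq -1\}$.  Pick $\vec p\in A$ with $\vec p_y$ as large as possible; then in fact $f$ and $g$ agree on every integer point with $y\geq \vec p_y+1$.  Define $h_1:=T^{\vec p-x}f$ and $h_2:=T^{\vec p-x}g$; both lie in $X_\eta$ by translation invariance.  Then $h_1(x)=f(\vec p)\neq g(\vec p)=h_2(x)$, while for any $u\in\ZZ$ with $u_y\geq x_y+1$ the shifted point $u+\vec p-x$ has $y$-coordinate at least $\vec p_y+1$, so $h_1(u)=h_2(u)$.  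Applied to each $u\in\T\setminus\{x\}$, this yields the desired contradiction.

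The crucial step is the maximality of $\vec p_y$: only with this choice does the agreement region of the translated pair reach all the way up to $\{y\geq x_y+1\}$, which just barely covers $\T\setminus\{x\}$; any less extremal choice of $\vec p$ would shrink the agreement region and might leave points of $\T\setminus\{x\}$ uncontrolled.  The argument is inherently asymmetric with respect to the orientation of $\ell$ and treats only the case where $\T\setminus\{x\}$ lies on the $H(\ell)$-side of $\ell'$; the opposite configuration requires analogously that the reverse orientation of $\ell$ be a nonexpansive direction, a symmetry that is established only later in the paper.
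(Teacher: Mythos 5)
Your argument is correct and is essentially the proof the paper intends: the paper's proof of Corollary~\ref{modifiedgeneratingcor} is declared to be a straightforward modification of the argument in Proposition~\ref{parallelprop}, which is exactly your scheme --- take $f\neq g$ with $f\rst{H(\ell)}=g\rst{H(\ell)}$ from nonexpansiveness, locate an extremal point of the disagreement set, and translate so that $x$ lands on a disagreement point while $\T\setminus\{x\}$ lands inside the agreement region, contradicting that $x$ is $\eta$-generated by $\T$. Two points of comparison. First, your normalization by an element of $SL_2(\Z)$ sending $\ell$ to a coordinate direction tacitly uses that $\ell$ is a rational line; under the standing hypothesis this is supplied by Corollary~\ref{cor:rational} (and is essentially the only place the complexity assumption enters), whereas the template argument in Proposition~\ref{parallelprop} avoids rationality altogether via the ``distance $d/2$'' perturbation in the case $\ell_{x_{\max}}\cap A=\emptyset$; you should either cite Corollary~\ref{cor:rational} or reproduce that perturbation to justify the reduction. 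Second, the orientation issue you flag at the end is not a gap relative to the paper: the notion of nonexpansive direction is orientation-dependent through $H(\ell)$, the paper's own proof-by-reference establishes precisely the one-sided statement you prove, and in every application of the corollary (the horizontal case, the southwest case, and the step involving $q_1,q_2$ in the proof of Proposition~\ref{lemma:balanced}) the set $\T\setminus\{x\}$ lies on the $H(\ell)$-side of the separating line, which is exactly the configuration you treat. Ruling out the opposite configuration would indeed require the antiparallel direction to be nonexpansive, i.e., Proposition~\ref{cor:antiparallel}, and appealing to that here would be circular, since it rests on Proposition~\ref{lemma:balanced}, which in turn uses this corollary; so your restriction matches the intended content of the statement rather than constituting a missing case.
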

\begin{proof}
The argument is a straightforward modification of the proof of~\eqref{eq:subset} in Proposition~\ref{parallelprop}.
\end{proof}

\subsection{Balanced sets}

We define the types of sets that are used to show that under the complexity assumption, 
both orientations of a nonexpansive line for $\eta$ determine nonexpansive directions:  
\begin{definition}
\label{def:balanced}
Suppose $\ell$ is a directed line.  A finite, convex 
set $\S\subset\ZZ$ is {\em $\ell$-balanced} if
\begin{enumerate}
\item There is an edge $w\in E(\S)$ parallel to $\ell$;
\item Both endpoints of $w$ are $\eta$-generated by $\S$;
\item The set $\S$ satisfies $D_{\eta}(\S\setminus w)>D_{\eta}(\S)$;
\item Every line parallel to $\ell$ that has nonempty intersection with 
$\S$ intersects $\S$ in at least $|w\cap\S|-1$ integer points.
\end{enumerate}
\end{definition}

Note that an $\ell$-balanced set is not necessarily an $\eta$-generating set.

Definition~\ref{def:balanced} is slightly less general than the 
definition of an $\ell$-balanced set used in~\cite{CK}, where an 
$\ell$-balanced does not necessarily satisfy the first condition.

The main result of this section is Proposition~\ref{prop:period}, where 
we use balanced sets to deduce the periodicity 
of certain elements of $X_{\eta}$.  In~\cite{CK}, we relied on the 
stronger assumption that $P_{\eta}(R_{n,k})\leq\frac{nk}{2}$ 
to  show the existence of balanced sets (as well as other uses related 
to the existence of generating sets with further properties).  Due to 
the simplified geometry available in rectangles of height $3$, we are 
able to avoid the stronger assumption.

We start by showing the existence of balanced sets:
\begin{proposition}
\label{lemma:balanced}
Suppose there exists $n\in\N$ such that
$P_{\eta}(R_{n,3})\leq3n$ and suppose that $\ell\subset\R^2$ is a
nonexpansive direction for $\eta$.  If $\eta$ is aperiodic, then there
exists an $\ell$-balanced subset.
\end{proposition}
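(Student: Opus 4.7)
My starting point is the $\eta$-generating set $\S \subseteq R_{n,3}$ produced by Proposition~\ref{parallelprop}. By that proposition, there is a boundary edge $w \in E(\S)$ parallel to $\ell$, giving condition~(i) of Definition~\ref{def:balanced}. The two endpoints of $w$ are boundary vertices of $\S$, so by the definition of an $\eta$-generating set they are $\eta$-generated, giving (ii). Condition (iii) is immediate from Corollary~\ref{generatingequation} applied to $w$: one has $D_\eta(\S\setminus w) \geq D_\eta(\S)+1 > D_\eta(\S)$. Hence only condition (iv) — that every line parallel to $\ell$ meeting $\S$ contains at least $|w \cap \S|-1$ integer points — may fail, and the real content of the proposition is constructing a subset of $\S$ for which (iv) also holds.

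If (iv) already holds for $\S$, then $\S$ is the desired $\ell$-balanced set. Otherwise, some line $\ell'$ parallel to $\ell$ meets $\S$ in $m < |w\cap\S|-1$ integer points. My strategy is to pass to a convex subset $\S' \subseteq \S$ that retains (a sub-segment of) $w$ as a boundary edge but has its parallel-to-$\ell$ slabs of more uniform length. Because $\S \subseteq R_{n,3}$ occupies at most three horizontal rows, the convex shape of $\S$ falls into a small number of combinatorial types — essentially parallelograms, trapezoids, or hexagons stratified by at most three slabs parallel to $\ell$. Within each type, one can identify a natural sub-parallelogram whose one side lies along $w$ (or along a sub-segment of $w$ of length matching the shortest interior slab), and for which the lengths of consecutive parallel slabs differ by at most one, which is enough for (iv).

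The two delicate points are preserving conditions (ii) and (iii) under this passage to a subset. For (ii), the endpoints of the (possibly shortened) edge of $\S'$ must still be $\eta$-generated by $\S'$; here I would invoke Corollary~\ref{modifiedgeneratingcor}, which forbids any translate of a nonexpansive direction from separating an $\eta$-generated vertex from the rest of its set, and use that the new endpoints lie on lines parallel to the nonexpansive direction $\ell$ itself. For (iii), I would use the minimality property~\eqref{eq:subset} inherited from $\S$ — every nonempty convex subset $\S' \subset \S$ satisfies $D_\eta(\S') \geq D_\eta(\S)+1$ — together with a discrepancy bookkeeping argument in the spirit of Corollary~\ref{amountincrease} applied to the points removed along $w$. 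The main obstacle, and where the aperiodicity hypothesis is essential, is ruling out a degenerate collapse: the trimming process could in principle reduce $\S$ to a set supported on a single line parallel to $\ell$, in which case condition (i) would be lost. I would argue that such a collapse would force $\eta$ to be periodic with period vector parallel to $\ell$ (by the Morse-Hedlund theorem applied transversely, together with the generating-set structure), contradicting the standing assumption that $\eta$ is aperiodic. Thus the process terminates with a genuinely two-dimensional $\ell$-balanced subset of $\S$.
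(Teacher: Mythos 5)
Your reduction to condition (iv) is fine as far as the set $\S$ itself is concerned: (i) comes from Proposition~\ref{parallelprop}, (ii) from the definition of an $\eta$-generating set, and (iii) from Corollary~\ref{generatingequation}, and indeed when $|w\cap\S|=2$ (or more generally when (iv) already holds) you are done, exactly as in the paper. The genuine gap is in the main step, where you propose to repair (iv) by trimming $\S$ to a convex subset $\S'$. Neither (ii) nor (iii) is inherited by subsets, and the tools you cite point the wrong way: being $\eta$-generated by a \emph{smaller} set is a \emph{stronger} property than being generated by $\S$ (you have fewer points from which to deduce the missing color), and Corollary~\ref{modifiedgeneratingcor} is a geometric constraint that holds \emph{given} generation, not a device for proving it; likewise \eqref{eq:subset} and Corollary~\ref{amountincrease} only bound $D_\eta(\S')$ and $D_\eta(\S'\setminus w')$ from below by $D_\eta(\S)+1$ and from above by $D_\eta(\S)+j$, respectively, which says nothing about the strict inequality $D_\eta(\S'\setminus w')>D_\eta(\S')$ required in (iii). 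So the trimming scheme has no mechanism to certify (ii) or (iii) for the trimmed set, and there is no reason an $\ell$-balanced set should exist \emph{inside} $\S$ at all.

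The paper's construction goes in the opposite direction precisely to create the slack you are missing: in the non-vertical, non-horizontal case it first \emph{enlarges} $R_{n,3}$ by the points $(-1,0),\dots,(-a,0)$, whose colors are determined uniquely by translates of the generating set, so that $P_\eta$ stays constant while the cardinality grows and $D_\eta(T_a)\leq -a$; only then does it delete the $a$ points that create short lines parallel to $\ell$ (these are not $\eta$-generated, by Corollary~\ref{modifiedgeneratingcor}, so the discrepancy does not increase), obtaining a set $\S_0$ with $D_\eta(\S_0)\leq 0$ in which every line parallel to $\ell$ meets at least two lattice points. The balanced set is then found among the suffix unions $U_i$ of slabs parallel to $\ell$: aperiodicity forces $D_\eta(U_{n-1})>0$ (else Morse--Hedlund on a single line gives periodicity, which is the correct role of aperiodicity, close in spirit to your ``collapse'' remark), so there is a maximal $i_{\max}$ with $D_\eta(U_{i_{\max}})\leq 0$, and it is this \emph{maximality} that delivers condition (iii), with a short argument via Corollary~\ref{modifiedgeneratingcor} delivering (ii). Separately, the vertical and horizontal cases are handled by entirely different sets ($R_{n,3}$ itself, and $R_{n,2}$ together with part of the top row, using Sander--Tijdeman to get $D_\eta(R_{n,2})>0$), neither of which is a subset of $\S$; your proposal does not account for these cases either.
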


\begin{proof}
Suppose $\ell$ is a nonexpansive direction for $\eta$.  We make some
simplifying assumptions.  First,
if $n=1$ then by the Morse-Hedlund Theorem~\cite{MH},  $\eta$ is periodic
and so we can assume that $n>1$.  Second, if $P_{\eta}(R_{n,2})\leq2n$,
then by Sander and
Tijdeman's Theorem~\cite{ST}, $\eta$ is periodic and so we can assume
that $P_{\eta}(R_{n,2})>2n$, meaning that 
\begin{equation}
\label{eq:discrep}
D_\eta(R_{n,3})\leq 0  < D_\eta(R_{n,2}).
\end{equation}
Finally, we can assume that
$P_{\eta}(R_{(n-1),3})>3n-3$, meaning that $n$ is chosen to be the
minimal integer satisfying $P_{\eta}(R_{n,3})\leq3n$.

We consider three cases depending on the direction of $\ell$: vertical,
horizontal, and neither vertical nor horizontal.

 By Proposition~\ref{parallelprop}, there exists  an $\eta$-generating set $\S\subset R_{n,3}$ 
 and  there is an edge $w\in E(\S)$ parallel to
$\ell$.  If $|w\cap\S|=2$, then $\S$ is $\ell$-balanced and
we are done.  Thus it suffices to assume that
$|w\cap\S|\geq3$.

\subsubsection*{Assume $\ell$ is vertical}   Suppose that $\ell$
points downward
(the case that $\ell$ points upward is similar).  Then since a vertical
line cannot intersect a subset of $R_{n,3}$ in more than three places,
$|w\cap\S|=3$.  Observe that $(0,0)$ and $(0,2)$ are both
$\eta$-generated by $R_{n,3}$ since $\S$ can be translated into 
$R_{n,3}$ in such a way that $w$ is translated to the set
$\{(0,0),(0,1),(0,2)\}$.  In this case $R_{n,3}$ is $\ell$-balanced.

\subsubsection*{Assume $\ell$ is horizontal} Suppose that $\ell$
points left
(the case that $\ell$ points right is similar).  For $0\leq a\leq b\leq
n$, set
$$
\S_{[a,b]}:=R_{n,2}\cup\left\{(x,2)\colon a\leq x\leq b\right\}.
$$
Let $\tilde{\S}$ be a minimal set of this
form (with respect
to the partial ordering by inclusion)
satisfying $D_{\eta}(\tilde{\S})\leq D_{\eta}(R_{n,3})$; say $\tilde{\S}=\S_{[a_0,b_0]}$
for some $a_0\leq b_0$.  
Suppose first
that $a_0=b_0$.  If $(a_0,2)$ is $\eta$-generated by $R_{n,2}$,
Corollary~\ref{modifiedgeneratingcor}
contradicts the fact that the horizontal is a nonexpansive direction
for $\eta$.  If  $(a_0,2)$ is not $\eta$-generated by $R_{n,2}$, then
$D_{\eta}(R_{n,2})\leq D_{\eta}(\tilde{\S})\leq D_{\eta}(R_{n,3})$, also
a contradiction of~\eqref{eq:discrep}.  Therefore we can assume $a_0<b_0$ and
$D_{\eta}(\tilde{\S})\leq D_{\eta}(R_{n,3})\leq D_{\eta}(R_{n,2})$.  By
minimality and Lemma~\ref{removegenerated}, the points $(a_0,2)$ and
$(b_0,2)$ must both be $\eta$-generated by $\tilde{\S}$.  In this case
$\tilde{\S}$ is an $\ell$-balanced set.

\subsubsection*{Assume $\ell$ is neither vertical nor horizontal}
Making a coordinate change of the form $(x,y)\mapsto(\pm
x,\pm y)$ if necessary, we can assume that $\ell$ points southwest. A
line parallel to $\ell$ cannot intersect $R_{n,3}$ in more than three
places and so $|w\cap\S|=3$.   Since $\ell$ is not
horizontal, $w\cap\S$ can have at most one integer point at any
$y$-coordinate and thus $w\cap\S$ has exactly one integer point at each of
the three $y$-coordinates in $R_{n,3}$.  Therefore there exists an
integer $a>0$ such that $(-a,-1)$ is parallel to $\ell$.
Since a translation of any $\eta$-generating set is also
$\eta$-generating, without loss of generality we can assume the
bottom-most integer point on $w$ is $(0,0)$.

We claim that any $\eta$-coloring of $R_{n,3}$ extends uniquely to an
$\eta$-coloring of the set $R_{n,3}\cup\{(-1,0),(-2,0),\dots,(-a,0)\}$.
Set $T_0:=R_{n,3}$ and for $0<i\leq a$,  define
$$
T_i:=R_{n,3}\cup\{(-1,0),(-2,0),\dots,(-i,0)\}.
$$
Then the set $\S-(i,0)$ is contained in $T_i$ and
$(\S\setminus\{(0,0)\})-(i,0)$ is contained in $T_{i-1}$.  Since
$\S-(i,0)$ is an $\eta$-generating set, the color of vertex $(-i,0)$ can
be deduced from the coloring of $\S-(i,0)$.  Thus for $0<i\leq a$, every
$\eta$-coloring of $T_{i-1}$ extends uniquely to an $\eta$-coloring of
$T_i$.  Inductively, every $\eta$-coloring of $R_{n,3}$ extends uniquely
to an $\eta$-coloring of $T_a$ and the claim follows (see Figure~\ref{fig:lemma}).

\begin{figure}
\captionsetup[subfigure]{labelformat=empty}
\begin{subfigure}[t]{0.45\textwidth}
		\centering
		  \def\svgwidth{\columnwidth}
        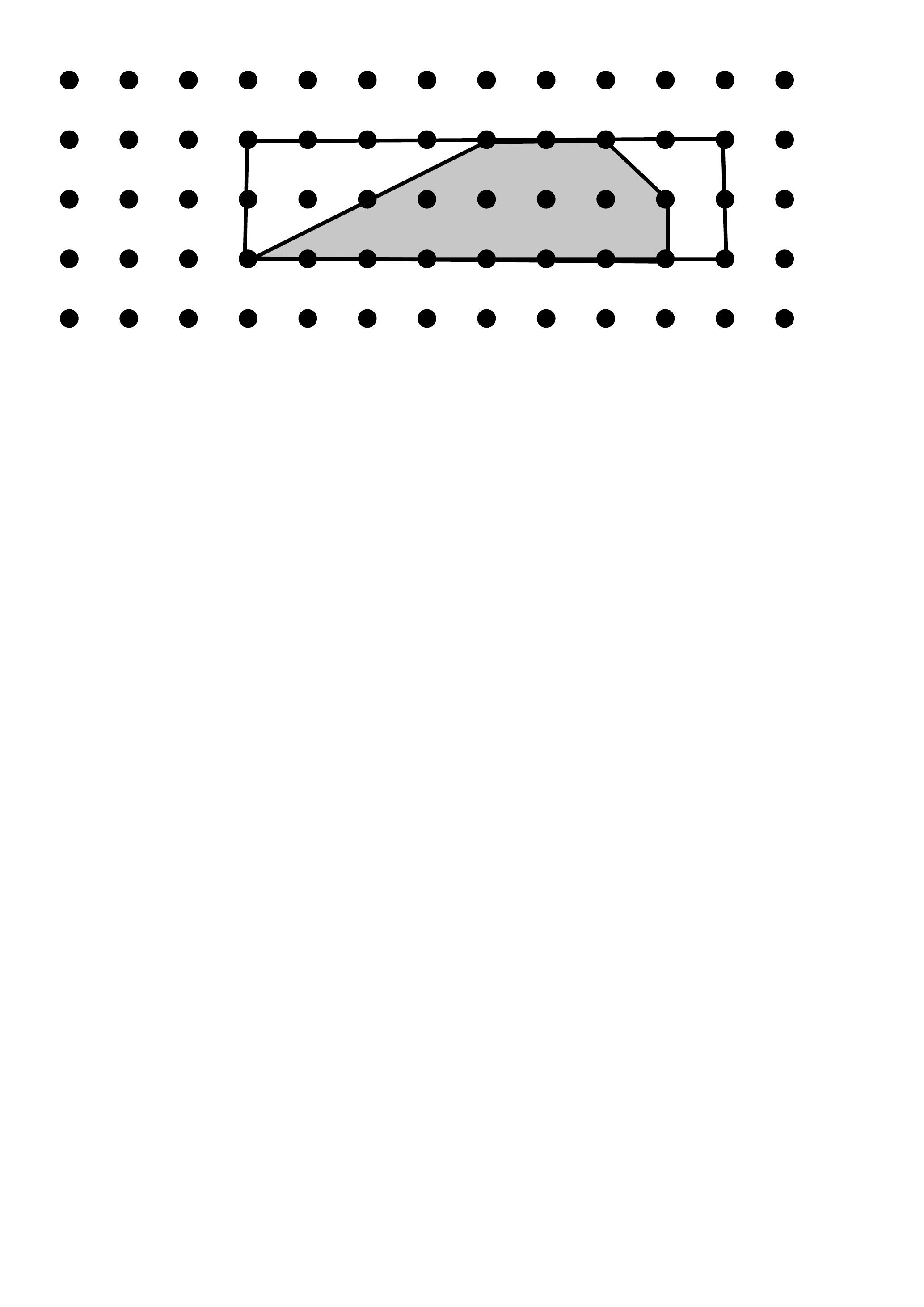
                \setlength{\abovecaptionskip}{-63mm}
		\caption{The $\eta$-generating set $\S$ is shaded.}
%		\label{}
	\end{subfigure}
	\hspace{0.2 in}
	\begin{subfigure}[t]{0.45\textwidth}
		\centering
		  \def\svgwidth{\columnwidth}
        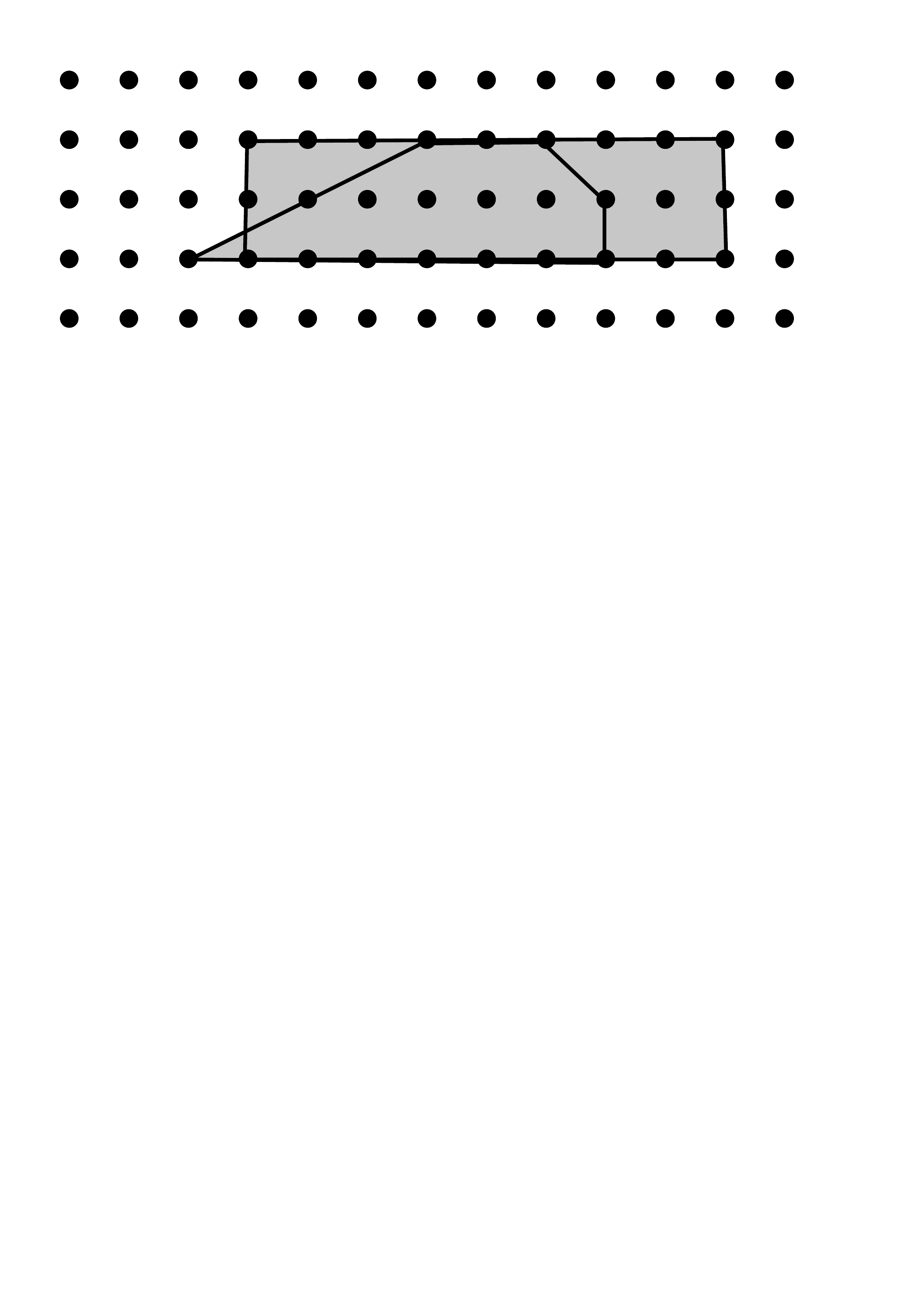
                \setlength{\abovecaptionskip}{-63mm}
		\caption{The shaded region $T_1$ contains $\S-(1,0)$.}
%		\label{}
	\end{subfigure}
	\vspace{-2 in}
	\hspace{0.2 in}
	\begin{subfigure}[t]{0.45\textwidth}
		\centering
		  \def\svgwidth{\columnwidth}
        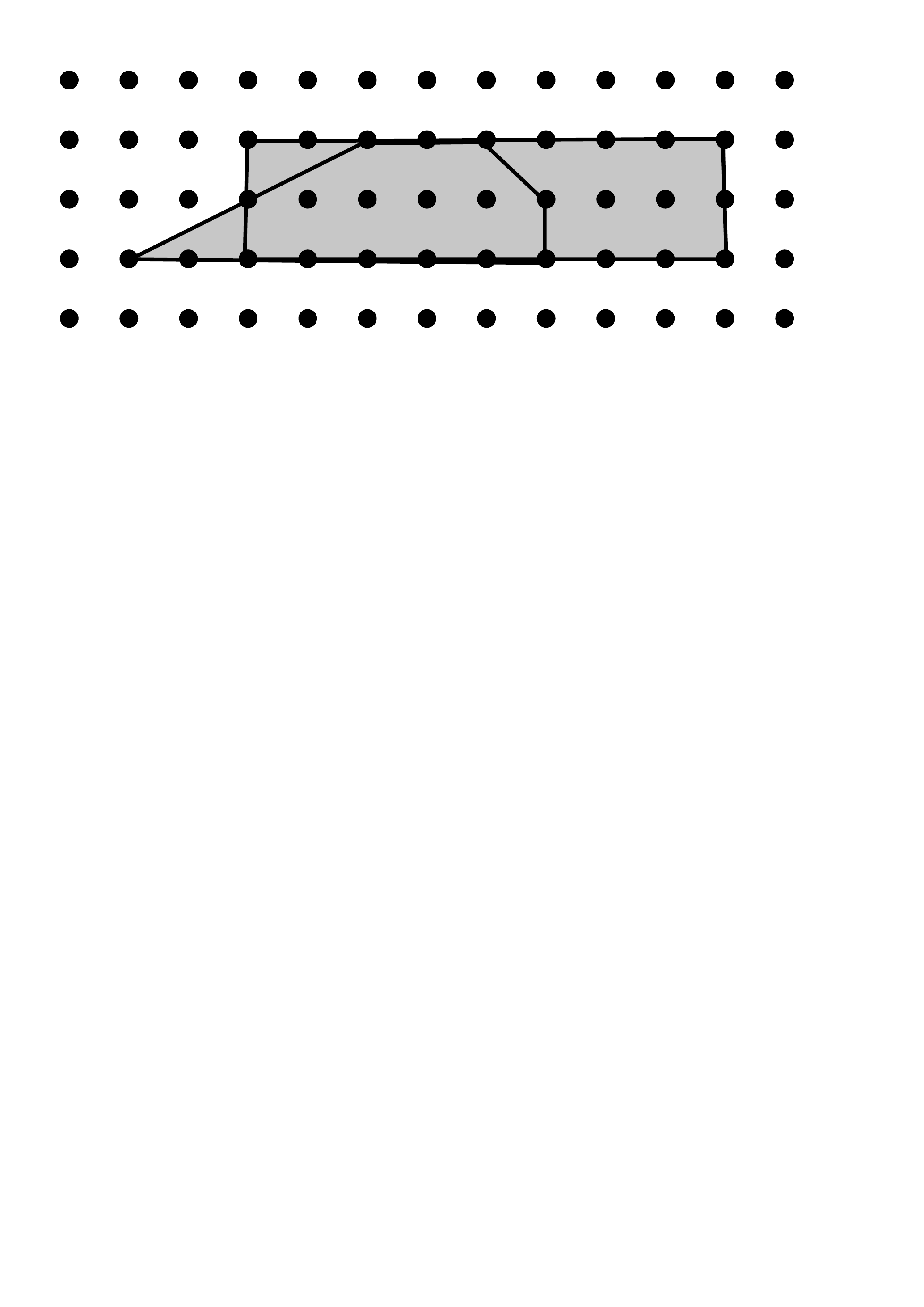
                \setlength{\abovecaptionskip}{-63mm}
		\caption{The shaded region $T_2$ contains $\S-(2,0)$.}
%		\label{}
	\end{subfigure}
	\hspace{0.2 in}
	\begin{subfigure}[t]{0.45\textwidth}
		\centering
  \def\svgwidth{\columnwidth}
        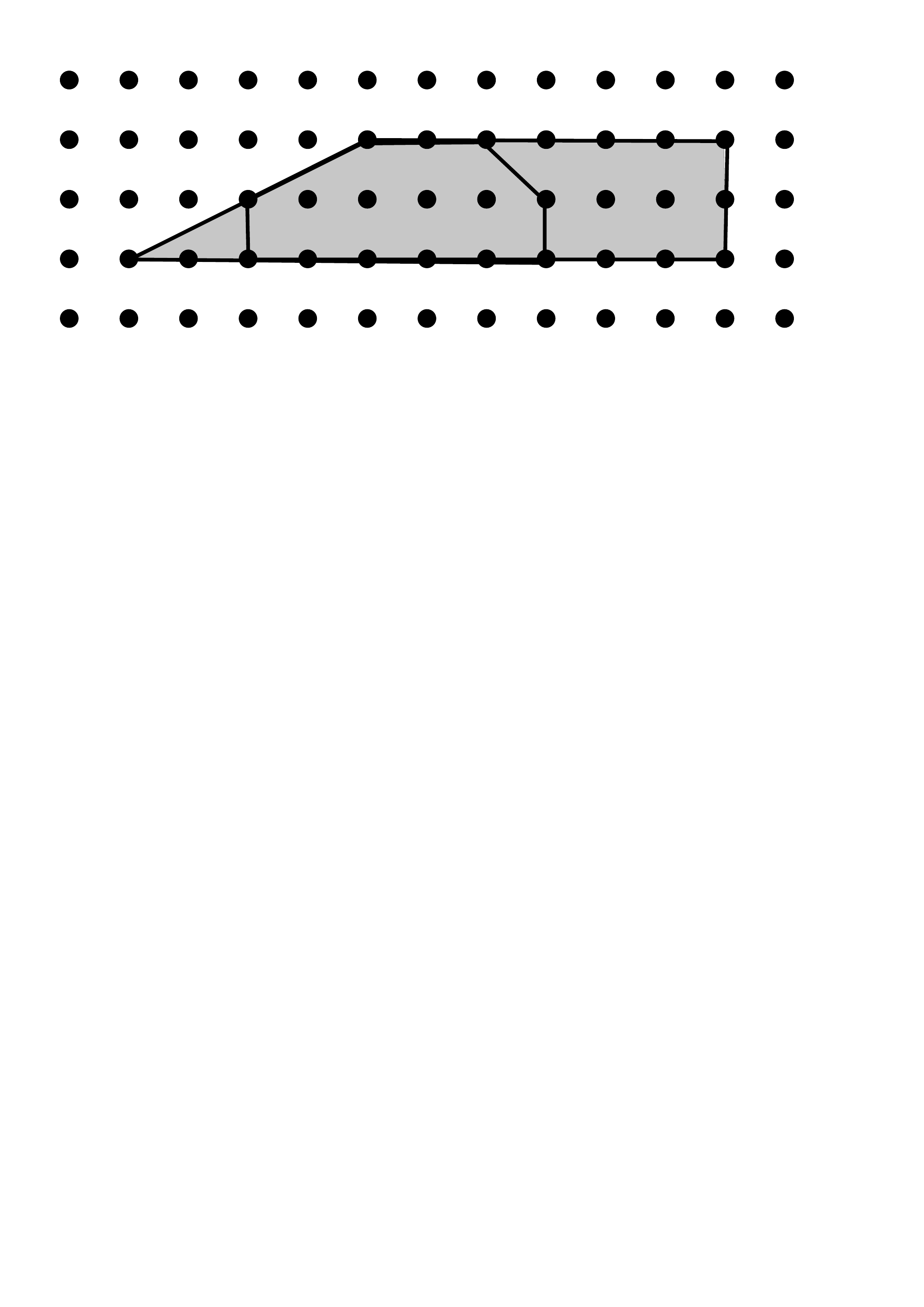
                \setlength{\abovecaptionskip}{-63mm}
                		\caption{Points not $\eta$-generated are removed.}
%		\label{}
	\end{subfigure}
	\vspace{-2 in}
		\hspace{0.2 in}
	
	\begin{subfigure}[t]{0.45\textwidth}
		\centering
  \def\svgwidth{\columnwidth}
        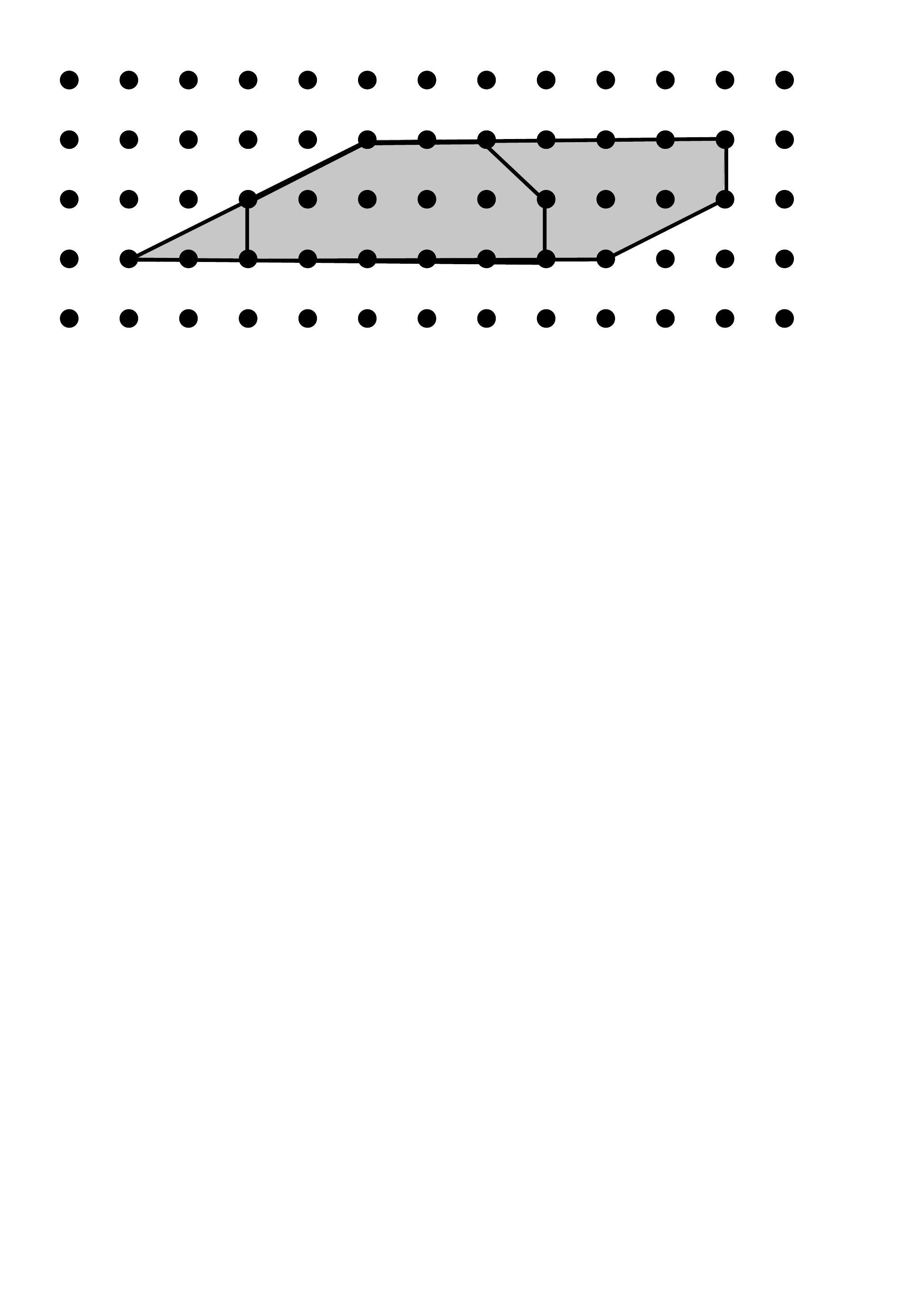
                \setlength{\abovecaptionskip}{-63mm}
		\caption{Rational lines parallel to $\ell$ intersecting the shaded set $\S_0$ contain at least $2$ integer points.}
%		\label{}
	\end{subfigure}
		\vspace{-2 in}
	\caption{Steps in the proof of Proposition~\ref{lemma:balanced} when $\ell$ is neither vertical nor horizontal.}
	\label{fig:lemma}
\end{figure}
%%%%%%%%%%%%%

Therefore, $P_{\eta}(T_a)=P_{\eta}(R_{n,3})$ and we obtain
$$
D_{\eta}(T_a)=D_{\eta}(R_{n,3})-a\leq -a.
$$
Observe that any line parallel to $\ell$ that intersects $\{(0,2),(1,2),\dots,(a-1,2)\}$ must intersect 
$T_a$ in precisely one integer point.  Inductively applying Corollary~\ref{modifiedgeneratingcor}, 
we have that for each $0\leq i<a$, the point $(i,2)$ is not $\eta$-generated by the set $T_a\setminus\{(0,2),\dots,(i-1,2)\}$ and so $D_{\eta}(T_a\setminus\{(0,2),\dots,(i-1,2)\})\leq D_{\eta}(T_a)$.  Setting $\tilde{T}_a:=T_a\setminus\{(0,2),(1,2),\dots,(a-1,2)\}$, it follows that $D_{\eta}(\tilde{T}_a)\leq -a$.
Define
$$
\S_0:=\tilde{T}_a\setminus\{(0,n-a),(0,n-a+1),\dots,(0,n-1)\}.
$$
By Corollary~\ref{amountincrease}, $D_{\eta}(\S_0)\leq
D_{\eta}(T_a)+a\leq0$. (See Figure~\ref{fig:lemma}.)  Moreover, 
every line parallel to $\ell$ that has nonempty intersection with $\S_0$
intersects it in at least two places.

We claim that $\S_0$ contains an $\ell$-balanced subset.  Let $\ell_0$
be the translation of $\ell$ that has nonempty intersection with $w$ and
for $0<i\leq n-1$, let $\ell_i:=\ell_0+(i,0)$.  Then for all $i$,
$\ell_i\cap\S_0\neq\emptyset$ and every element of $\S_0$ is contained
in exactly one of $\ell_0,\dots,\ell_{n-1}$.  Let
$$
U_i:=\bigcup_{j=0}^{n-1}\ell_j\cap\S
$$
and observe that $U_0=\S_0$.  Thus $D_{\eta}(U_0)\leq0$.  If
$D_{\eta}(U_{n-1})\leq0$, then $U_{n-1}$ contains an $\eta$-generating
set.  Since $U_{n-1}$ is a convex subset of a single line, the
Morse-Hedlund Theorem~\cite{MH}
implies that $\eta$ is periodic, a contradiction.  Therefore we have
that $D_{\eta}(U_{n-1})>0$ and there is a maximal index $0\leq
i_{\max}<n-1$ such that $D_{\eta}(U_{i_{\max}})\leq0$.

Write $\ell_{i_{\max}}\cap\S_0=\{q_1,q_2,q_3\}$, where $q_1$ is the
bottom-most element and $q_3$ is the top-most.  If both $q_1$ and $q_3$
are $\eta$-generated by
$U_{i_{\max}}$,
then $U_{i_{\max}}$ is $\ell$-balanced and we are done (here we are using
the fact that every line parallel to $\ell$ that has nonempty intersection
with $\S_0$ intersects it in at least two places).  Otherwise,
without loss of
generality, suppose $q_3$ is not $\eta$-generated by $U_{i_{\max}}$.  Set
$$
\S_1:=U_{i_{\max}}\setminus\{q_3\}.
$$
Since this removes a non-generated
vertex from a set of nonpositive discrepancy, it follows 
that $D_{\eta}(\S_1)\leq D_{\eta}(U_{i_{\max}})\leq0$.  We claim that both $q_1$
and $q_2$ are $\eta$-generated by $\S_1$.  Say, for example, that
$q_2$ is not $\eta$-generated by $\S_1$.  Then
$D_{\eta}(\S_1\setminus\{q_2\})\leq0$ and $q_1$ is $\eta$-generated
by $\S_1\setminus\{q_2\}$, as otherwise
$D_{\eta}(U_{i_{\max}+1})\leq D_{\eta}(\S_1)\leq D_{\eta}(U_{i_{\max}})$
contradicting maximality of $i_{\max}$.
By Corollary~\ref{modifiedgeneratingcor}, this contradicts the fact that
$\ell$ is a nonexpansive direction for $\eta$.  The same argument holds
if $q_1$ is not $\eta$-generated and so we conclude that both $q_1$ and
$q_2$
are $\eta$-generated by $\S_1$.  Therefore $\S_1$ is an
$\ell$-balanced set.
\end{proof}

\begin{definition}
Given a nonexpansive direction $\ell$ and an
$\ell$-balanced set $\S^\ell$,
define the associated {\em border $B_\ell(\S^\ell)$} to be the thinnest strip with
edges parallel to $\ell$
that contains $\S^\ell$.
If $w_\ell\in E(\S^\ell)$ is the edge of $\S^\ell$ that is parallel to
$\ell$,
then $B_\ell(S^\ell\setminus w_\ell)$ denotes the thinnest strip with
edges parallel to $\ell$ that contains $S^\ell\setminus w_\ell$.
\end{definition}

Note that if there exists $n\in\N$ satisfying
$P_{\eta}(R_{n,3})\leq3n$, then
Proposition~\ref{lemma:balanced} guarantees the existence of the set 
$\S^{\ell}$ and the boundary
edge $w_\ell$.

\begin{proposition}\label{prop:period}
Suppose there exists $n\in\N$
such that $P_{\eta}(R_{n,3})\leq3n$, $\ell$ is a nonexpansive
direction for $\eta$, and $H$ is a half-plane whose boundary is parallel to
$\ell$.
Then if $f,g\in X_{\eta}$ are such that $f\neq g$ but
$f\rst{H}=g\rst{H}$, then both $f$ and $g$ are periodic with period
vector parallel to $\ell$.

Furthermore,  if there exists an $\ell$-balanced set $\S^\ell$,  $w_\ell\in
E(\S^\ell)$ is the edge of $\S^\ell$ parallel to $\ell$, and
$B_\ell(S^\ell)$ and $B_\ell(S^\ell\setminus w_\ell)$ are the associated
borders, then for any $\vec u\in\ZZ$:
\begin{enumerate}
\item If the restriction $(T^{\vec
u}f)\rst{B_{\ell}({\S}^{\ell}\setminus w_\ell)}$ does not extend
uniquely to an $\eta$-coloring of $B_{\ell}(\S_{\ell})$, then the period
of $(T^{\vec u}f)\rst{B_{\ell}({\S}^{\ell}\setminus w_\ell)}$ is at most
$|w_{\ell}\cap\ZZ|-1$;\label{first}
\item If the restriction $(T^{\vec
u}f)\rst{B_{\ell}({\S}^{\ell}\setminus w_\ell)}$ extends uniquely to an
$\eta$-coloring of $B_{\ell}(\S^{\ell})$, then the period of $(T^{\vec
u}f)\rst{B_{\ell}({\S}^{\ell}\setminus w_\ell)}$ is at most
$2|w\cap\ZZ|-2$.\label{second}
\end{enumerate}
\end{proposition}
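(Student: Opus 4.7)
My plan is to prove the two numbered statements (the ``furthermore'' part) first and then derive the opening assertion from them. Fix an $\ell$-balanced set $\S^\ell$ with distinguished edge $w_\ell$; this exists by Proposition~\ref{lemma:balanced}, provided $\eta$ is aperiodic (otherwise the conclusion is immediate). Let $\vec v$ be a primitive integer vector along $\ell$. The set $B_\ell(\S^\ell)\setminus B_\ell(\S^\ell\setminus w_\ell)$ is a single lattice row parallel to $\ell$, namely the row containing $w_\ell$, which I will call the \emph{bottom row}.

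For item (\ref{first}), assume $(T^{\vec u}f)\rst{B_\ell(\S^\ell\setminus w_\ell)}$ admits another $\eta$-extension $h\in X_\eta$, so that $h$ and $T^{\vec u}f$ agree on the sub-strip but their disagreement set $D$ is a nonempty subset of the bottom row. For each $k\in\Z$, let $c_k:=(T^{\vec u}f)\rst{(\S^\ell\setminus w_\ell)+k\vec v}$, viewed after translation by $-k\vec v$ as an $\eta$-coloring of $\S^\ell\setminus w_\ell$. The key step is to show that $D$ is bi-infinite with gaps of length at most $|w_\ell\cap\ZZ|-1$. Indeed, suppose $\vec p_1+k\vec v\in D$ (where $\vec p_1$ is the left endpoint of $w_\ell$) while $T^{\vec u}f$ and $h$ agreed on $(w_\ell\setminus\{\vec p_1\})+k\vec v$: then they would agree on $(\S^\ell\setminus\{\vec p_1\})+k\vec v$, and since $\vec p_1$ is $\eta$-generated by $\S^\ell$ (condition (ii) of Definition~\ref{def:balanced}) and hence $\vec p_1+k\vec v$ is $\eta$-generated by $\S^\ell+k\vec v$, they would also agree at $\vec p_1+k\vec v$, contradicting $\vec p_1+k\vec v\in D$. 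So $D$ must contain another point within distance $|w_\ell\cap\ZZ|-1$ to the right, and the symmetric argument using $\vec p_2$ propagates $D$ to the left. Consequently every translate $w_\ell+k\vec v$ meets $D$, so each $c_k$ admits at least two distinct $\eta$-extensions to $\S^\ell+k\vec v$ (one from $T^{\vec u}f$, one from $h$) and therefore lies in the set $B$ of non-uniquely-extending $\eta$-colorings of $\S^\ell\setminus w_\ell$. By Corollary~\ref{cor:bound}, $|B|\leq|w_\ell\cap\ZZ|-1$.

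To conclude item (\ref{first}), view the sub-strip coloring as a bi-infinite sequence of columns perpendicular to $\vec v$. Condition (iv) of Definition~\ref{def:balanced} ensures each row of $\S^\ell$ contains at least $|w_\ell\cap\ZZ|-1$ integer points, so that $\S^\ell\setminus w_\ell$ covers its bounding box densely enough for a length-$L$ column block of the sub-strip (with $L$ the width of $\S^\ell$) to be identified with the corresponding $c_k$. The block complexity of the column sequence at length $L$ is then at most $|B|\leq|w_\ell\cap\ZZ|-1\leq L$, and the one-dimensional Morse-Hedlund theorem forces the sub-strip to be periodic with period at most $|w_\ell\cap\ZZ|-1$ in the $\vec v$-direction.

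Item (\ref{second}) is handled by a similar counting: with the two-extension argument unavailable, I use the bound $P_\eta(\S^\ell)-P_\eta(\S^\ell\setminus w_\ell)\leq|w_\ell\cap\ZZ|-1$ together with a decomposition of the $c_k$ seen along the sub-strip into ``good'' (uniquely-extending) and ``bad'' types to show the variety of windows is at most $2|w_\ell\cap\ZZ|-2$, from which Morse-Hedlund gives period at most $2|w_\ell\cap\ZZ|-2$. The opening assertion then follows by covering: for every $\vec u\in\ZZ$, one of (\ref{first}), (\ref{second}) applies and the restriction $(T^{\vec u}f)\rst{B_\ell(\S^\ell\setminus w_\ell)}$ has period along $\vec v$ of magnitude at most $2|w_\ell\cap\ZZ|-2$; letting $P$ be the least common multiple of $\{1,2,\ldots,2|w_\ell\cap\ZZ|-2\}$, we find $P\vec v$ is a common period of every translated sub-strip, and these sub-strips cover $\ZZ$ as $\vec u$ varies, so $f$ itself is periodic with period $P\vec v$, and the same reasoning yields the conclusion for $g$. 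The main obstacle I anticipate is the propagation step in item (\ref{first}) forcing every $c_k$ into $B$: without simultaneously exploiting that both endpoints of $w_\ell$ are $\eta$-generated, $D$ could a priori be sparse and only the finitely many consecutive windows touching a given point of $D$ would immediately be seen to extend non-uniquely.
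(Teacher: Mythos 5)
The decisive problem is your treatment of item~(\ref{second}), and with it the overall plan of proving the two numbered statements first and only afterwards deducing the half-plane assertion. Item~(\ref{first}) is genuinely local (it holds for any element of $X_\eta$), but item~(\ref{second}) is not: for an arbitrary element of $X_\eta$, a strip whose sub-strip coloring extends uniquely to $B_\ell(\S^\ell)$ need not be periodic at all, and no counting of the kind you sketch can bound its window complexity. Corollary~\ref{cor:bound}, equivalently the inequality $P_\eta(\S^\ell)-P_\eta(\S^\ell\setminus w_\ell)\le|w_\ell\cap\ZZ|-1$, bounds only the number of colorings of $\S^\ell\setminus w_\ell$ that extend \emph{non-uniquely}; the uniquely-extending (``good'') colorings can be as numerous as $P_\eta(\S^\ell\setminus w_\ell)$, so in the situation of~(\ref{second}) --- where every window along the strip may be good --- your good/bad decomposition yields no bound like $2|w_\ell\cap\ZZ|-2$ on the number of windows, and Morse--Hedlund cannot be invoked. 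In the paper, the bound in~(\ref{second}) is not a one-strip counting fact: it is produced inside a strip-by-strip induction (Steps 2 and 3 of the paper's proof) in which a strip whose windows extend uniquely \emph{inherits} a divisor of the period of the adjacent strip (via the $\eta$-generated endpoint of $w_\ell$), the induction being anchored at the boundary of $H$, where the hypothesis $f\neq g$, $f\rst{H}=g\rst{H}$ forces non-unique extension, hence case~(\ref{first}) and period at most $|w_\ell\cap\ZZ|-1$; the factor $2$ enters through a pigeonhole on the at most $2\bigl(P_\eta(\S^\ell)-P_\eta(\S^\ell\setminus w_\ell)\bigr)\le 2|w_\ell\cap\ZZ|-2$ colorings of $\S^\ell$ whose restrictions are bad. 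So the half-plane hypothesis must be used before~(\ref{second}) can be proved; your order of argument (establish~(\ref{first}) and~(\ref{second}) for every $\vec u$ without reference to $g$, then take an lcm to get periodicity of $f$) cannot be carried out.

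There is also a gap in your Morse--Hedlund step for item~(\ref{first}), though this one is repairable. Your count controls the colorings $c_k$ of the translates $(\S^\ell\setminus w_\ell)+k\vec v$; to conclude periodicity of the sub-strip you pass to ``length-$L$ column blocks'' and claim condition (iv) of Definition~\ref{def:balanced} lets them be identified with the corresponding $c_k$. That identification fails: $\S^\ell\setminus w_\ell$ is a convex set whose fibers parallel to $\ell$ are staggered, so it does not fill the bounding parallelogram inside the strip, the $L$-block at position $k$ is not determined by $c_k$, and the number of $L$-blocks is not bounded by the number of $c_k$'s. The paper repairs exactly this by using the staircase set $V$ (one extreme point on each line of the strip parallel to $\ell$) and the stacked set $U=\bigcup_{y=0}^{|w_\ell\cap\ZZ|-2}\bigl(V+y\vec v\bigr)$; condition (iv) is what guarantees $U\subseteq\S^\ell\setminus w_\ell$, so that windows of length $|w_\ell\cap\ZZ|-1$ (not $L$) in the sequence of $V$-translates are in bijection with colorings of translates of $U$ and hence number at most $|w_\ell\cap\ZZ|-1$, after which Morse--Hedlund applies. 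Your propagation argument showing that the disagreement set meets every translate of $w_\ell$ (using that both endpoints of $w_\ell$ are $\eta$-generated) is correct and mirrors the paper's Step 1 claim; it is the conversion of the bound on the $c_k$'s into a one-dimensional complexity bound that needs the staircase device.
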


Note that if $\eta$ is aperiodic, then by Proposition~\ref{lemma:balanced}, there exists an $\ell$-balanced set $\S^\ell$.

\begin{proof}
We assume that $\ell$ is a nonexpansive direction and there exists $n\in\N$ 
with $P_\eta(R_{n,3})\leq 3n$.  
Let $\S^\ell$ be an $\ell$-balanced set, 
$w_\ell\in E(\S^\ell)$ be the edge of $\S^\ell$ parallel to $\ell$  and let 
$B_\ell(S^\ell)$ and $B_\ell(S^\ell\setminus w_\ell)$ be the associated
borders.  
By definition, $\S^{\ell}\setminus w_{\ell}$ is 
contained in $B_{\ell}(\S^{\ell}\setminus w_{\ell})$.  Find $A\in 
SL_2(\Z)$ such that $A(\ell)$ points vertically downward and define 
$\tilde{\eta}\colon\ZZ\to\A$ by $\tilde{\eta} = \eta\circ A^{-1}$ and 
$\tilde{\S^{\ell}} = A(\S^{\ell})$.  Observe that $\eta$ is 
aperiodic if and only if $\tilde{\eta}$ is aperiodic, and that 
$\tilde{\S^{\ell}}$ is $A(\ell)$-balanced for $\tilde{\eta}$.

Let $f,g\in X_\eta$ be as in the statement of the proposition.  
 Let 
$\tilde{f}:=f\circ A^{-1}$, $\tilde{g}:=g\circ A^{-1}$, and 
$\tilde{w}_{\ell}:=A(w_{\ell})$.  It suffices 
to show that for any $\vec 
u\in\ZZ$ , 
$\tilde{f}, \tilde{g}$ are periodic and 
that $(T^{\vec u}\tilde{f})\rst{A(B_{\ell}(\S^{\ell}\setminus 
w_\ell))}$ satisfies the claimed bounds on its period.

The proof proceeds in three steps.  First we show that the restriction 
of $f$ to the strip $B_{\ell}(\S^{\ell}\setminus w_{\ell})$ is 
periodic.  Next we use this fact to show that $f$ itself is periodic.  
Finally we use the periodicity of $f$ (with some as yet unknown period) to 
establish the claimed bounds on the period of $(T^{\vec 
u}f)\rst{A(B_{\ell}(\S^{\ell}\setminus w_{\ell}))}$.

\subsubsection*{Step 1: Showing 
$\tilde{f}\rst{B_{\ell}(\S^{\ell}\setminus w_{\ell})}$ is periodic}
For $i\in\Z$, let
$$
H_i:=\{(x,y)\in\ZZ\colon x \geq i\}.
$$
By translating the coordinate system if necessary and using the nonexpansivity of 
$\ell$, we can assume that 
$A(H)=H_0$.  Furthermore, there exists a translation $(i,0)$ such that 
$(T^{-(i,0)}\tilde{f})\rst{H_{0}} = (T^{-(i,0)} \tilde{g})\rst{H_{0}}$, but 
$(T^{-(i,0)}\tilde{f})\rst{H_{-1}}\neq (T^{-(i,0)} \tilde{g})\rst{H_{-1}}$.  Without loss, 
we can assume that $i=0$.  
Set $B:=A(B_{\ell}(\S^{\ell}\setminus w_{\ell}))$ and without loss, assume 
that $B\subseteq H_0$ and $B\not\subseteq H_1$.  Choose minimal $L\in\N$ 
such that
\begin{equation}
\label{def:L}
B=\left\{(x,y)\in\ZZ\colon 0\leq x<L\right\}.
\end{equation}
For $i\in\Z$, set
$$
C_i:=\tilde{\S^{\ell}}+(0,i)\text{ and 
}D_i:=\tilde{\S^{\ell}}\setminus\tilde{w}_{\ell}+(0,i).
$$
We claim that for all $i\in\Z$, the $\tilde{\eta}$-coloring 
$\tilde{f}\rst{D_i}$ does not extend uniquely to an 
$\tilde{\eta}$-coloring of $\tilde{f}\rst{C_i}$.
If not, then $\tilde{f}\rst{B}$ extends uniquely to an 
$\tilde{\eta}$-coloring of $B\cup C_i$ for some $i\in\Z$.  Since any 
translation of an $\ell$-balanced set is also $\ell$-balanced, the 
top-most vertex of the edge of $C_{i+1}$ parallel to $A(\ell)$ is 
$\tilde{\eta}$-generated by $C_{i+1}$.
This is the only element of $C_{i+1}$ that is not contained in $B\cup 
C_i$, and so $\tilde{f}\rst{B}$ extends uniquely to an $\eta$-coloring 
of $B\cup C_i\cup C_{i+1}$.
By induction, $\tilde{f}\rst{B}$ extends uniquely to an $\eta$-coloring 
of $B\cup\bigcup_{j\geq i}C_j$.  The bottom-most vertex of the edge of 
$C_i$ parallel to $A(\ell)$ is also $\eta$-generated by $C_i$,
and so a similar induction argument shows that $\tilde{f}\rst{B}$ 
extends uniquely to an $\tilde{\eta}$-coloring of 
$B\cup\bigcup_{j\in\Z}C_j$.  This contradicts the fact that 
$\tilde{f}\rst{H_0}=\tilde{g}\rst{H_0}$ but 
$\tilde{f}\rst{H_{-1}}\neq\tilde{g}\rst{H_{-1}}$ and so the claim 
follows.  Equivalently, for all $j\in\Z$, the $\tilde{\eta}$-coloring 
$(T^{(0,j)}\tilde{f})\rst{D_0}$ does not extend uniquely to an 
$\tilde{\eta}$-coloring of $C_0$.

By Corollary~\ref{cor:bound}, there are at most 
$|\tilde{w}_{\ell}\cap\tilde{\S^{\ell}}|-1=|w_{\ell}\cap\S^{\ell}|-1$
many colorings of $D_0$ that extend non-uniquely to an 
$\tilde{\eta}$-coloring of $C_0$.  Thus
$$
\left|\left\{(T^{(0,i)}\tilde{f})\rst{D_0}\colon 
i\in\Z\right\}\right|\leq|w_{\ell}\cap\S^{\ell}|-1.
$$
For each integer $0\leq x<L$, where $L$ is defined as in~\eqref{def:L}, let $p_x$ be the bottom-most element of 
$\tilde{\S^{\ell}}\cap\{(x,j)\colon j\in\Z\}$.  Set
$$
V:=\{p_x\colon 0\leq x<L\}\text{ and 
}U:=\bigcup_{y=0}^{|w_{\ell}\cap\S^{\ell}|-2}V+(0,y).
$$
Since $\tilde{\S^{\ell}}$ is $A(\ell)$-balanced, $U\subseteq D_0$. 
\begin{figure}[ht]
      \centering
       \def\svgwidth{\columnwidth}
    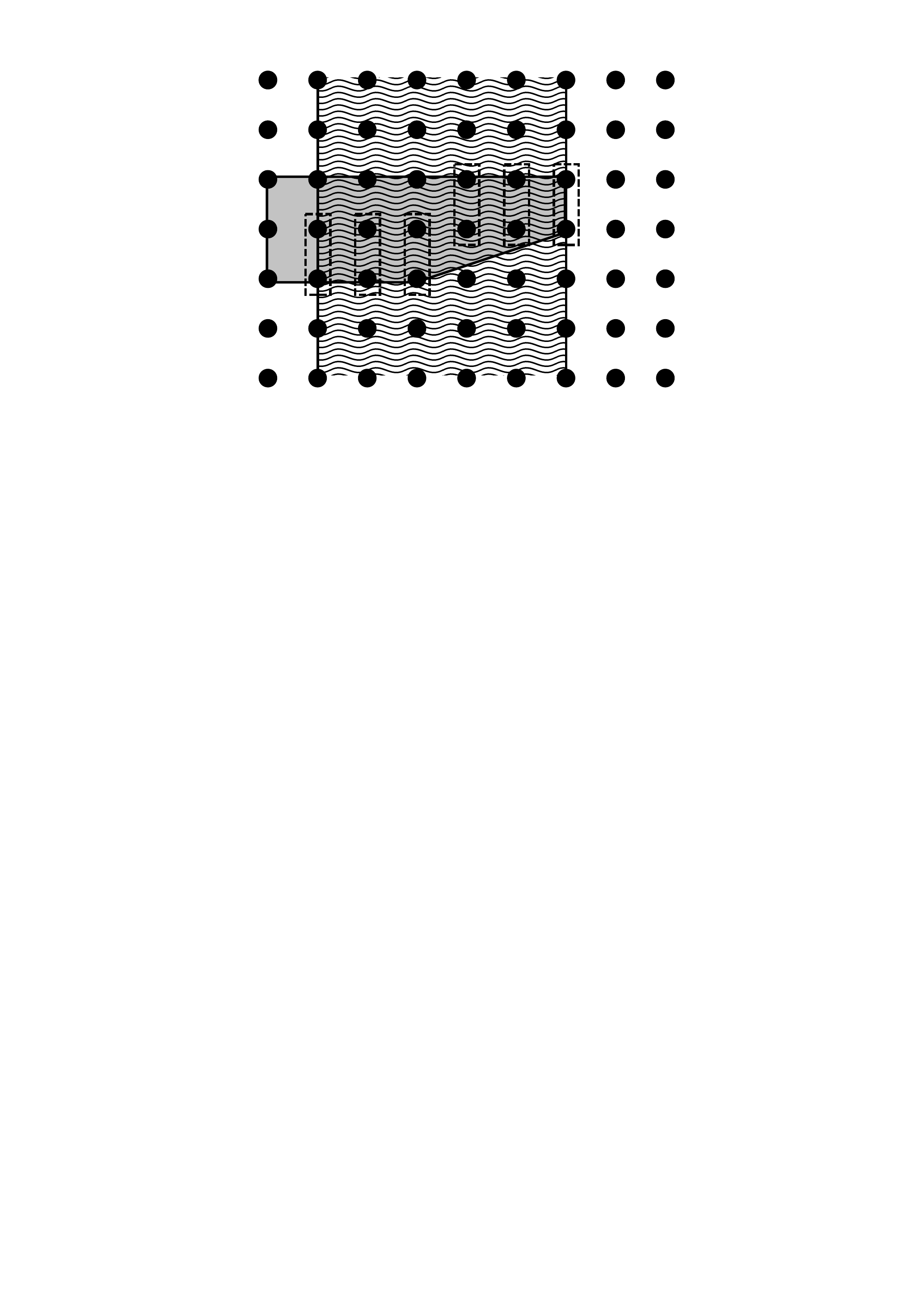
     \setlength{\abovecaptionskip}{-120mm}
      \caption{The shaded region represents $\S$, the union of the boxes is $U$, 
      and the union of the bottom most elements of the boxes is the set $V$.  
      Step $1$ of the proof shows that the wavy region $B$ is periodic.}
     \label{fig:morse-hedlund}
\end{figure}
(See Figure~\ref{fig:morse-hedlund}.)
Define
$\alpha\colon\Z\to\mathcal{W}_{\eta}(V)$
by $\alpha(j):=(T^{(0,j)}\tilde{f})\rst{V}$.  
Patterns of the form 
$\alpha\rst{\{m,m+1,\dots,m+|w_{\ell}\cap\S^{\ell}|-2\}}$ are 
in one-to-one correspondence with patterns of the form 
$(T^{(0,m)}\tilde{f})\rst{U}$.  The number of such patterns is at most 
the number of patterns of the form $(T^{(0,m)}\tilde{f})\rst{D_0}$,
which is at most $|w_{\ell}\cap\S^{\ell}|-1$.  By the 
Morse-Hedlund Theorem~\cite{MH}, $\alpha$ is periodic with period at 
most $|w_{\ell}\cap\S^{\ell}|-1$.  Therefore 
$\tilde{f}\rst{B}$ is vertically periodic with period at most 
$|w_{\ell}\cap\S^{\ell}|-1$ as well.

\subsubsection*{Step 2: Showing $f$ is periodic}  For $i\in\Z$, set
$$
B_i:=B+(i,0).
$$

We claim that for any $i\geq 0$, we have that $\tilde{f}\rst{B_{-i}}$ is 
vertically periodic
and the periods satisfy the bounds in the statement of the proposition.  
For $i=0$, we have already shown that $\tilde{f}\rst{B_0}$ is vertically 
periodic of period at most $|w_{\ell}\cap\S^{\ell}|-1$.  We proceed by induction and 
suppose that for all $0\leq i<k$, we have that $\tilde{f}\rst{B_{-i}}$ is 
periodic and
\begin{enumerate}
\item The period of $\tilde{f}\rst{B_{-i}}$ is at most 
$2|w_{\ell}\cap\S^{\ell}|-2$;
\item If for all $j\in\Z$, the $\eta$-coloring 
$(T^{-(-i,j)}\tilde{f})\rst{\tilde{\S^{\ell}}}$ does not extend uniquely 
to an $\eta$-coloring of $\S^{\ell}$, then the period of 
$\tilde{f}\rst{B_{-i}}$ is at most $|w_{\ell}\cap\S^{\ell}|-1$.
\end{enumerate}

First we show that $\tilde{f}\rst{B_{-k}}$ is vertically periodic of 
period at most $2|w_{\ell}\cap\S^{\ell}|-2$.  Suppose there 
exists $j\in\Z$ such that 
\begin{equation}
\label{eq:extends}
\text{ $(T^{-(-k+1,j)}\tilde{f})\rst{\tilde{\S^{\ell}}}$ extends uniquely to an 
$\eta$-coloring of $\S^{\ell}$.}
\end{equation}
Let 
$p\leq2|w_{\ell}\cap\S^{\ell}|-2$ be the minimal vertical 
period of $\tilde{f}\rst{B_{-k+1}}$.
Then for all $m\in\Z$, 
$(T^{-(-k+1,j+mp)}\tilde{f})\rst{\tilde{\S^{\ell}}}$ extends uniquely to 
an $\eta$-coloring of $\S^{\ell}$ and in particular all the colorings 
$(T^{-(-k+1,j+mp)}\tilde{f})\rst{\tilde{\S^{\ell}}}$ coincide.
By periodicity of $\tilde{f}\rst{B_{-k+1}}$, all of the colorings 
$(T^{-(-k+1,j+mp+1)}\tilde{f})\rst{\tilde{\S^{\ell}}\setminus \tilde{w_\ell}}$ coincide and so 
all of the colorings 
$(T^{-(-k+1,j+mp+1)}\tilde{f})\rst{\tilde{\S^{\ell}}}$ coincide except 
possibly on the top-most element of $\tilde{w_{\ell}}$.
Since $\tilde{\S^{\ell}}$ is $A(\ell)$-balanced, the top-most element of 
$\tilde{w_{\ell}}$ is $\eta$-generated by $\tilde{S^{\ell}}$, and so the colorings 
coincide on the top-most element of $\tilde{w_{\ell}}$ as well.  By 
induction, for any $q$ with $0\leq q<p$ and all $m\in\Z$, all 
colorings of the form $(T^{-(-k+1,j+mp+q)}\tilde{f})\rst{\tilde{\S^{\ell}}}$ 
coincide.  This implies that $\tilde{f}\rst{B_{-k}}$ is periodic and 
that its period divides the period of $\tilde{f}\rst{B_{-k+1}}$.

Otherwise, if~\eqref{eq:extends} does not hold, we can suppose that for all $j\in\Z$, the coloring 
$(T^{-(-i,j)}\tilde{f})\rst{\tilde{\S^{\ell}}\setminus\tilde{w_\ell}}$ does not extend uniquely 
to an $\eta$-coloring of $\tilde{S^{\ell}}$.  Then by applying the 
Morse-Hedlund Theorem as in Step 1, the vertical period of 
$\tilde{f}\rst{B_{-k+1}}$ is at most 
$|w_{\ell}\cap\S^{\ell}|-1$.
As above, let $0\leq p<|w_{\ell}\cap\S^{\ell}|-1$ be the 
minimal vertical period of $\tilde{f}\rst{B_{-k+1}}$.
Let 
$\pi\colon\mathcal{W}_{\eta}(\tilde{\S^{\ell}})\to\mathcal{W}_{\eta}(\tilde{\S^{\ell}}\setminus\tilde{w_\ell})$ 
be the natural restriction map.
As in Corollary~\ref{cor:bound}, there are at most 
$P_{\eta}(\tilde{\S^{\ell}})-P_{\eta}(\tilde{\S^{\ell}}\setminus\tilde{w_\ell})$ elements of 
$\mathcal{W}_{\eta}(\tilde{\S^{\ell}}\setminus\tilde{w_\ell})$ whose pre-image under $\pi$ 
contains more than one element; say the number of such elements is $Q$.
There are at most 
$Q+P_{\eta}(\tilde{S^{\ell}})-P_{\eta}(\tilde{\S^{\ell}}\setminus\tilde{w_\ell})$ elements of 
$\mathcal{W}_{\eta}(\tilde{\S^{\ell}})$ where $\pi$ is not one-to-one.
That is, there are at most
$$2(P_{\eta}(\tilde{\S^{\ell}})-P_{\eta}(\tilde{\S^{\ell}}\setminus\tilde{w_\ell}))\leq2|w_{\ell}\cap\S^{\ell}|-2$$
many $\eta$-colorings of $\tilde{\S^{\ell}}$ whose restrictions to 
$\tilde{\S^{\ell}}\setminus\tilde{w_\ell}$ do not extend uniquely to  $\eta$-colorings of 
$\tilde{\S^{\ell}}$.

Each of the colorings $(T^{-(-k+1,j)}\tilde{f})\rst{\tilde{\S^{\ell}}}$ is such a 
coloring.
By the pigeonhole principle, there exist $0\leq i<j<p$ such that
\begin{equation}
\label{eq:restriction}
(T^{-(-k+1,j_1)}\tilde{f})\rst{\tilde{\S^{\ell}}}=(T^{-(-k+1,j_2)}\tilde{f})\rst{\tilde{\S^{\ell}}}.
\end{equation}
Since $\tilde{\S^{\ell}}$ is $A(\ell)$-balanced, every vertical line with 
nonempty intersection with $\tilde{\S^{\ell}}$ contains at least 
$|w_{\ell}\cap\S^{\ell}|-1$ integer points.  Since the 
vertical period of $\tilde{f}\rst{B_{-k+1}}$ is at most 
$|w_{\ell}\cap\S^{\ell}|-1$ and by using~\eqref{eq:restriction},
we have that $j_2-j_1$ is a multiple of $p$.
Using induction as previously, we have that
$$(T^{-(-k+1,j_1+j)}\tilde{f})\rst{\tilde{\S^{\ell}}}=(T^{-(-k+1,j_2+j)}\tilde{f})\rst{\tilde{\S^{\ell}}}$$
for all $j\in\Z$.  In particular $\tilde{f}\rst{B_{-k+1}\cup B_{-k}}$ is 
vertically periodic of period at most 
$2|w_{\ell}\cap\S^{\ell}|-2$.

By induction, for all $k>0$ we have that $\tilde{f}\rst{B_{-k}}$ is vertically periodic with the 
bounds claimed in the proposition.  Let 
$\T^{\ell}\subseteq R_{n,3}$ be a set with is balanced in the direction 
antiparallel to $\ell$.  Since the restriction of $\tilde{f}$ to the 
vertical half-plane $\{(x,y)\in\ZZ\colon x\leq 0\}$ is periodic, a similar induction argument (using 
$\T^{\ell}$ in place of $\S^{\ell}$) shows that $\tilde{f}$ is 
vertically periodic on all of $\ZZ$, where the precise bounds on the period are yet 
to be determined.  (A priori, these bounds depend on the number of integer points 
on the edge of $\T^\ell$ that is antiparallel to $\ell$.) 

\subsubsection*{Step 3: Showing that the period of $f$ satisfies the 
claimed bounds}  We are left with showing that $\tilde{f}\rst{B_k}$ 
satisfies the claimed bounds for all $k\in\Z$.
We remark that the argument showing that $\tilde{f}\rst{B_{-k}}$ is 
vertically periodic with the claimed bounds relied only the fact that 
$\tilde{f}\rst{B_0}$ was vertically periodic of period at most 
$|w_{\ell}\cap\S^{\ell}|-1$.
Thus it suffices to show that for infinitely many $k>0$, the vertical 
period of $\tilde{f}\rst{B_k}$ is at most 
$|w_{\ell}\cap\S^{\ell}|-1$, since then the previous argument 
shows that the half-plane to the left of such a $B_k$ satisfies the 
claimed bounds.
As before, it further suffices to show that for infinitely many $k>0$, 
the $\eta$-coloring $\tilde{f}\rst{B_k}$ does not extend uniquely to an 
$\eta$-coloring of $B_k\cup B_{k-1}$.

Since $\tilde{f}\rst{B_k}$ is vertically periodic for all $k$, there 
are only finitely many colorings $B_0$ that are of the form 
$(T^{-(k,0)}\tilde{f})\rst{B_0}$ for some $k\in\Z$.
Say there exists an integer $k_{\min} \geq 0$ such that 
$(T^{-(k,0)}\tilde{f})\rst{B_0}$ extends uniquely to an $\eta$-coloring 
of $B_0\cup B_{-1}$ for all $k>k_{\min}$ and without loss assume that 
$k_{\min}$ is the minimal integer with this property.
Let $K\geq k_{\min}$ be the smallest integer for which there exists 
$i\in\N$ such that
$$(T^{-(K+i,0)}\tilde{f})\rst{B_0}=(T^{-(K,0)}\tilde{f})\rst{B_0}$$
($K$ exists by the pigeonhole principle).  Then by definition of 
$k_{\min}$, there is a unique extension of this common coloring of 
$B_0\cup B_{-1}$.  In particular, 
$(T^{-(K-1,0)}\tilde{f})\rst{B_0}=(T^{-(K+i-1,0)}\tilde{f})\rst{B_0}$. If 
$K>0$, this contradicts minimality of $k_{\min}$.  If $K=0$ this 
contradicts the fact that $\tilde{f}\rst{B_0}$ does not extend uniquely 
to an $\eta$-coloring of $B_0\cup B_{-1}$,
which is one of the defining characteristics of $\tilde{f}$.  Either 
case leads to a contradiction,
and so we conclude that no such integer $k_{\min}$ exists.  The bounds on 
$\tilde{f}\rst{B_k}$ claimed in the proposition follow.  

The analogous 
argument applied to $g$ implies the periodicity of $g$.
\end{proof}

\begin{corollary}\label{cor:periodicstripimpliesperiodic}
Assume there exists $n\in\N$ such that $P_{\eta}(R_{n,3})\leq3n$.  Suppose 
$\ell$ is an oriented rational line in $\R^2$, $\widehat{\ell}$ is the anti-parallel line, 
$\S^\ell$ is an $\ell$-balanced set, $\widehat{\S^\ell}$ is an $\widehat{\ell}$-balanced set,
%$\widehat{\ell}$ is the antiparallel line, and there exists an $\ell$-balanced 
%set $\S^{\ell}$ and an $\widehat{\ell}$-balanced set $\widehat{\S}^{\ell}$.  
$w_{\ell}\in E(\S)$ is the edge parallel to $\ell$ and 
$B\subset\ZZ$ is the thinnest bi-infinite strip with edges parallel and 
antiparallel to $\ell$ that contains $\S^{\ell}\setminus w_{\ell}$.  
If $\eta\rst{B}$ is periodic, then $\eta$ is periodic with period vector 
parallel to $\ell$.
\end{corollary}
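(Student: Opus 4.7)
The plan is to apply Proposition~\ref{prop:period} after exhibiting two distinct configurations in $X_\eta$ that agree on a half-plane whose boundary is parallel to $\ell$. Since $B$ is a bi-infinite strip with edges parallel to $\ell$ and $\eta\rst{B}$ is periodic, any period vector that preserves $B$ must be parallel to $\ell$ (a vector with nonzero perpendicular component would shift $B$ off itself, making the restricted periodicity condition incoherent); let $\vec v\neq \vec 0$ be such a period. Then $\eta\rst{B} = (T^{\vec v}\eta)\rst{B}$. If $T^{\vec v}\eta = \eta$ globally, then $\eta$ is itself periodic with period $\vec v$ parallel to $\ell$ and we are done.

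Otherwise, $T^{\vec v}\eta \neq \eta$ while the two configurations coincide on $B$, so the set of points where they disagree is nonempty and entirely contained in the union of the two open half-planes on either side of $B$. Some point of disagreement lies on one of these two sides; without loss of generality assume it lies on the same side of $B$ as the edge $w_\ell$ of $\S^\ell$, the other case being handled identically with $\widehat{\S^\ell}$ playing the role of $\S^\ell$. Taking the infimum, in the perpendicular-to-$\ell$ direction, of the displacements at which a disagreement occurs on this side, we obtain a closed half-plane $H^*$ with boundary parallel to $\ell$, containing $B$, on which $\eta = T^{\vec v}\eta$, but with a point just outside $H^*$ at which the two configurations differ.

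This shows that the corresponding oriented line (either $\ell$ or $\widehat{\ell}$) is a nonexpansive direction for $\eta$, witnessed by $f:=\eta$ and $g:=T^{\vec v}\eta$. The hypothesis of the corollary supplies a balanced set in each orientation, so Proposition~\ref{prop:period} applies with the half-plane $H^*$ and yields that $\eta$ is periodic with period vector parallel to $\ell$, as desired. The main subtlety is the bookkeeping step of showing that the period of $\eta\rst{B}$ can be taken parallel to $\ell$ (from $B$ being bounded in the perpendicular direction) and that the orientation of the resulting nonexpansive direction matches the appropriate hypothesized balanced set; the availability of both $\S^\ell$ and $\widehat{\S^\ell}$ in the statement is precisely what makes this matching possible regardless of which side of $B$ the disagreement first occurs.
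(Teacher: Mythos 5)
There is a genuine gap at the heart of your reduction. You set $f:=\eta$, $g:=T^{\vec v}\eta$, note that they agree on $B$, and then claim that by cutting at the first disagreement on one side of $B$ you obtain a half-plane $H^*$ with boundary parallel to $\ell$, containing $B$, on which $f$ and $g$ agree. But any half-plane whose boundary is parallel to $\ell$ and which contains the strip $B$ necessarily contains the \emph{entire} open half-plane flanking $B$ on one of its two sides. So if the disagreement set of $\eta$ and $T^{\vec v}\eta$ meets \emph{both} sides of $B$ --- which is perfectly possible a priori, since the hypothesis only controls $\eta\rst{B}$ --- then $f$ and $g$ agree on a fixed-width strip but on no half-plane with boundary parallel to $\ell$, your construction of $H^*$ collapses, no nonexpansive witness pair is produced, and Proposition~\ref{prop:period} cannot be invoked. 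Your proposal says nothing about this case, and it is exactly the case that forces the paper not to use Proposition~\ref{prop:period} as a black box: the paper instead reruns the Step~2 induction of that proposition, propagating the (vertical, after normalizing $\ell$) periodicity of $\eta\rst{B}$ outward one lattice strip at a time, separately on each side --- on one side via the $\ell$-balanced set $\S^\ell$ (dichotomy: either some strip coloring extends non-uniquely, giving the Morse--Hedlund period bound, or every extension is unique, in which case uniqueness transports the period), and on the other side via the $\widehat{\ell}$-balanced set $\widehat{\S^\ell}$. That argument never needs two global configurations agreeing on a half-plane. In the sub-case where all disagreements do lie on a single side (or $T^{\vec v}\eta=\eta$), your argument does work and is a legitimate shortcut, but as written it does not prove the corollary.

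A smaller point: your parenthetical claim that any period vector of $\eta\rst{B}$ ``must be parallel to $\ell$'' is not justified under the region-restricted notion of periodicity (Definition~\ref{def:periodic-in-region}); a vector with a small nonzero component perpendicular to $\ell$ gives a non-vacuous and coherent condition on the strip, and it does not immediately yield a period parallel to $\ell$. In the paper's applications of this corollary the periodicity of the strip is always given with period vector parallel to $\ell$, so this is an interpretive rather than fatal issue, but if you want to keep the reduction you should either assume that form of the hypothesis explicitly or supply the missing argument.
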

\begin{proof}
Let $\S^\ell$ be an $\ell$-balanced set and let $w_\ell\in E(\S)$ be the associated edge and 
$B$ the associated strip.  
The argument is nearly identical to the proof of Step 2 of 
Proposition~\ref{prop:period} and so we just summarize the differences. 
Maintaining the notation in that proof, if there exists $i\in\Z$ such 
that $\tilde{f}\rst{B_i}$ extends uniquely to an $\eta$-coloring of 
$B_i\cup B_{i-1}$, then $\tilde{f}\rst{B_i}$ is periodic of period at 
most $|w_{\ell}\cap\S^{\ell}|-1$ and the remainder of the 
induction is identical. Otherwise, for every $i\in\Z$, the 
coloring $\tilde{f}\rst{B_i}$ extends uniquely to an $\eta$-coloring of 
$B_i\cup B_{i-1}$.  By the pigeonhole principle and the fact that 
$\S^{\ell}$ is $\ell$-balanced, as in Step 2 of 
Proposition~\ref{prop:period}, it follows that whenever 
$\tilde{f}\rst{B_i}$ is vertically periodic, $\tilde{f}\rst{B_{i-1}}$ is 
vertically periodic of period dividing that of $\tilde{f}\rst{B_i}$.  
This establishes the result for the restriction of $f$ to 
$\bigcup_{j=0}^{\infty}B_{i-j}$.  The restriction to the other 
half-plane follows a similar argument using the antiparallel line $\widehat{\ell}$ 
and associated $\widehat{\ell}$-balanced set $\widehat{\S}^{\ell}$ instead of 
$\S^{\ell}$.
\end{proof}

\begin{corollary}\label{cor:semiinfinite}
Suppose there exists $n\in\N$ such that $P_{\eta}(R_{n,3})\leq3n$ and 
$f\in X_{\eta}$.
Suppose $\ell$ is a nonexpansive direction for $\eta$, $\vec u\in\ZZ$ is 
the shortest integer vector parallel to $\ell$, $\S$ is an 
$\ell$-balanced set, and $w\in E(\S)$ is the edge parallel to $\ell$.  
Let $B_{\ell}(\S\setminus w)$ be the intersection of $\ZZ$ with all 
lines parallel to $\ell$ that have nonempty intersection with 
$\S\setminus w$.  Finally, suppose there exists $R\in\N$ such that for all $r\geq R$, 
$(T^{r\cdot\vec u}f)\rst{\S\setminus w}$ does not extend uniquely to an 
$\eta$-coloring of $\S$.  Then $f\rst{B_\ell(\S\setminus 
w)}$ is eventually periodic with period vector parallel to $\vec u$, 
period at most $|w\cap\S|-1$, and the initial portion which may not be periodic has length at most $|w\cap\S|-1$.
\end{corollary}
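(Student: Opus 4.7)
First, mimicking Step 1 of the proof of Proposition~\ref{prop:period}, I would choose $A\in SL_2(\Z)$ so that $A(\vec u)=(0,1)$ (so that $A(\ell)$ points vertically upward), and define $\tilde{f}:=f\circ A^{-1}$, $\tilde{\eta}:=\eta\circ A^{-1}$, $\tilde{\S}:=A(\S)$, $\tilde{w}:=A(w)$, $\tilde{D}_0:=\tilde{\S}\setminus\tilde{w}$, and $\tilde{B}:=A(B_{\ell}(\S\setminus w))$. The hypothesis then becomes that $(T^{(0,r)}\tilde{f})\rst{\tilde{D}_0}$ does not extend uniquely to an $\tilde{\eta}$-coloring of $\tilde{\S}$ for every $r\geq R$; by Corollary~\ref{cor:bound}, this implies that $\bigl|\bigl\{(T^{(0,r)}\tilde{f})\rst{\tilde{D}_0} : r\geq R\bigr\}\bigr|\leq k$, where $k:=|w\cap\S|-1$.

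Next, I would let $V$ denote the set consisting of the bottom-most integer point of $\tilde{\S}$ in each column of $\tilde{D}_0$. Since $\tilde{\S}$ is $A(\ell)$-balanced, every vertical line meeting $\tilde{\S}$ contains at least $k$ integer points, and hence $U:=\bigcup_{y=0}^{k-1}\bigl(V+(0,y)\bigr)$ is contained in $\tilde{D}_0$. Defining $\alpha\colon\Z\to\A^{V}$ by $\alpha(m):=(T^{(0,m)}\tilde{f})\rst{V}$, the length-$k$ sub-blocks $(\alpha(m),\alpha(m+1),\ldots,\alpha(m+k-1))$ correspond bijectively with the colorings $(T^{(0,m)}\tilde{f})\rst{U}$, each of which is the restriction to $U$ of $(T^{(0,m)}\tilde{f})\rst{\tilde{D}_0}$. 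Combining this observation with the previous paragraph, the one-sided sequence $\alpha\rst{\{R,R+1,\ldots\}}$ has at most $k$ distinct length-$k$ sub-blocks.

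Finally, I would invoke the one-sided version of the Morse--Hedlund Theorem on $\alpha\rst{\{R,R+1,\ldots\}}$: having at most $k$ distinct length-$k$ sub-blocks, this sequence is eventually periodic with period $p\leq k$ and initial (possibly non-periodic) portion of length at most $k$. Unwinding the definition of $\alpha$ and applying $A^{-1}$, the eventual periodicity transfers to $\tilde{f}\rst{\tilde{B}}$ in the vertical direction and then to $f\rst{B_{\ell}(\S\setminus w)}$ in the $\vec u$ direction with the claimed bounds. I expect the main technical obstacle to lie in this final unwinding step: because the bottom-most points in $V$ sit at different vertical heights across the columns of $\tilde{B}$, transferring the Morse--Hedlund conclusion for the one-dimensional sequence $\alpha$ to the two-dimensional restriction $\tilde{f}\rst{\tilde{B}}$ requires confirming that the period and initial-portion bounds apply uniformly across all columns of the strip.
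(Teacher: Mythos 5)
Your proposal is correct and follows essentially the same route as the paper: the paper's proof is exactly the Step 1 argument of Proposition~\ref{prop:period} run one-sidedly, with the cross-section $V$, the stack $U\subseteq\S\setminus w$ (using the balanced condition), the bound of $|w\cap\S|-1$ non-uniquely-extending colorings from Corollary~\ref{cor:bound}, and the one-sided Morse--Hedlund Theorem. The final ``obstacle'' you flag is not an issue: since each symbol $\alpha(m)$ records the coloring of $V+(0,m)$ across \emph{all} columns simultaneously, eventual periodicity of $\alpha$ immediately gives the period and pre-period bounds uniformly on the strip.
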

\begin{proof}
The proof is almost identical to Step 1 of 
Proposition~\ref{prop:period}.  Define
$$
\alpha\colon\N\to\{(T^{r\cdot\vec u}f)\rst{B_{\ell}(\S\setminus 
w)}\colon r\geq R\}
$$
by setting $\alpha(i):=(T^{r\cdot\vec u}f)\rst{B_{\ell}(\S\setminus 
w)}$.  As in Proposition~\ref{prop:period}, we have that the number of 
patterns of the form 
$\alpha\rst{\{m,m+1,\dots,m+|w\cap\S|-2\}}$ is at most 
$|w\cap\S|-1$.  The one-sided version of the Morse-Hedlund 
Theorem~\cite{MH} shows that $\alpha$ is eventually periodic with
period at most $|w\cap\S|-1$ and is such that the initial portion has length at most $|w\cap\S|-1$.
\end{proof}

\begin{corollary}\label{cor:semiinfiniteperiodbounds}
Assume there exists $n\in\N$ such that $P_{\eta}(R_{n,3})\leq3n$.  
Suppose $\ell$ is an oriented rational line and there exists an 
$\ell$-balanced set $\S^{\ell}$.  Let $w_{\ell}\in E(\S^{\ell})$ be the 
edge parallel to $\ell$ and suppose $\T\subset\ZZ$ is an infinite convex 
set with a semi-infinite edge $W$ parallel to $\ell$.  Let
$$
U:=\left\{\vec u\in\ZZ\colon (\S^{\ell}\setminus w_\ell)+\vec 
u\subseteq\T\text{ and }w_\ell+\vec u\not\subseteq\T\right\}.
$$
If $\eta\rst{(\S\setminus w_{\ell})+U}$ is periodic with period vector 
parallel to $\ell$, then $\eta\rst{\S+U}$ is periodic with period vector 
parallel to $\ell$.  Moreover if for all $\vec u\in U$ the coloring 
$(T^{\vec u}\eta)\rst{\S\setminus w_{\ell}}$ does not extend uniquely to 
an $\eta$-coloring of $\S$, then the period of $\eta\rst{(\S\setminus 
w_{\ell})+U}$ is at most $|w_{\ell}\cap\S^{\ell}|-1$ and the 
period of $\eta\rst{\S+\vec u}$ is at most 
$2|w_{\ell}\cap\S^{\ell}|-2$.  Otherwise the period of 
$\eta\rst{\S+\vec u}$ is equal to the period of $\eta\rst{(\S\setminus 
w_{\ell})+U}$.
\end{corollary}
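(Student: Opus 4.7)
The plan is to reduce to the vertical case by an $SL_2(\Z)$ change of coordinates sending $\ell$ vertically downward, as in the setup of Proposition~\ref{prop:period}, so that $\vec v$ becomes a unit downward vector and the hypothesis becomes vertical periodicity of $\tilde\eta\rst{A((\S\setminus w_\ell)+U)}$. Because $\T$ is convex with a semi-infinite edge parallel to $\ell$, the set $U$ is closed under translation by $\vec v$ in the semi-infinite direction, so after transformation $A(U)$ is a vertically-closed subset of $\ZZ$. The proof then splits according to whether $(T^{\vec u}\eta)\rst{\S\setminus w_\ell}$ fails to extend uniquely for every $\vec u\in U$, or succeeds at some $\vec u_0\in U$.

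For the case of universal failure of unique extension: by condition (iii) of Definition~\ref{def:balanced} and Corollary~\ref{cor:bound}, at most $|w_\ell\cap\S^\ell|-1$ colorings of $\S^\ell\setminus w_\ell$ fail to extend uniquely to $\S^\ell$. Hence the set $\{(T^{\vec u}\eta)\rst{\S\setminus w_\ell}:\vec u\in U\}$ has cardinality at most $|w_\ell\cap\S^\ell|-1$, which together with the periodicity hypothesis gives the period bound $|w_\ell\cap\S^\ell|-1$ for $\eta\rst{(\S\setminus w_\ell)+U}$. Counting preimages of the restriction map $\pi\colon\mathcal{W}_\eta(\S^\ell)\to\mathcal{W}_\eta(\S^\ell\setminus w_\ell)$ (at most $|w_\ell\cap\S^\ell|-1$ points in the image have multiple preimages, and their preimages contribute at most $2(|w_\ell\cap\S^\ell|-1)$ elements total), I conclude that at most $2|w_\ell\cap\S^\ell|-2$ colorings of $\S^\ell$ lie over restrictions with multiple extensions. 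Every $(T^{\vec u}\eta)\rst{\S}$ for $\vec u\in U$ belongs to this set, so an application of the Morse-Hedlund Theorem yields the period bound $2|w_\ell\cap\S^\ell|-2$ for $\eta\rst{\S+U}$.

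For the ``otherwise'' case, fix $\vec u_0\in U$ with $(T^{\vec u_0}\eta)\rst{\S\setminus w_\ell}$ extending uniquely to an $\eta$-coloring of $\S$, and let $p$ be the period of $\eta\rst{(\S\setminus w_\ell)+U}$. The patterns $(T^{\vec u_0+mp\vec v}\eta)\rst{\S\setminus w_\ell}$ all agree for valid $m$, so by unique extension the patterns on $\S+\vec u_0+mp\vec v$ agree as well. To obtain the same identity at every position $\vec u_0+q\vec v\in U$ with $0\leq q<p$, I adapt Step~2 of Proposition~\ref{prop:period}: conditions (ii) and (iv) of Definition~\ref{def:balanced} ensure that the top endpoint of $w_\ell$ is $\eta$-generated by $\S^\ell$, so the coloring on $(\S^\ell\setminus w_\ell)+\vec u$ uniquely determines the top vertex of $w_\ell+\vec u$; iterating down the edge $w_\ell$, the full coloring on $\S+\vec u$ is forced by the coloring on $(\S\setminus w_\ell)+\vec u$ within each period. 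This propagates the period $p$ from $\eta\rst{(\S\setminus w_\ell)+U}$ to $\eta\rst{\S+U}$.

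The main obstacle I expect is the ``otherwise'' case: a single position of unique extension must be leveraged to deduce agreement of all extensions across a full period of $U$. This requires careful use of the $\ell$-balanced property to run an induction along $w_\ell$, mirroring the argument of Step~2 of Proposition~\ref{prop:period} but adapted to the semi-infinite region $U$ rather than to a half-plane, and tracking the already-known period $p$ of the restriction rather than establishing vertical periodicity from scratch.
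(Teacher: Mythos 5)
Your overall skeleton --- change coordinates so $\ell$ points vertically, split on whether some $(T^{\vec u}\eta)\rst{\S\setminus w_\ell}$ extends uniquely, count the non-uniquely-extending colorings via Corollary~\ref{cor:bound}, and run the Step~2 induction of Proposition~\ref{prop:period} --- is the same route the paper intends (its proof is essentially a pointer to Steps 2 and 3 of Proposition~\ref{prop:period} and Corollary~\ref{cor:semiinfinite}).  However, two of your key steps are not correct as written.  In the all-non-unique branch you deduce the bound $2|w_\ell\cap\S^\ell|-2$ for $\eta\rst{\S+U}$ by ``an application of the Morse-Hedlund Theorem'' from the fact that the colorings $(T^{\vec u}\eta)\rst{\S^\ell}$, $\vec u\in U$, lie in a set of at most $2|w_\ell\cap\S^\ell|-2$ elements.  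Morse-Hedlund requires a bound on the number of words of some length $n$ by $n$, not a bound on the alphabet; a sequence over two symbols can have arbitrarily large period, so an alphabet bound alone gives neither periodicity of the $w_\ell$-column nor any bound on its period.  What is needed (and what the paper does in the second half of Step~2) is: pigeonhole to find positions $\vec u_0+j_1\vec v$, $\vec u_0+j_2\vec v$ with $j_2-j_1\leq 2|w_\ell\cap\S^\ell|-2$ and equal $\S^\ell$-colorings; condition (iv) of Definition~\ref{def:balanced} together with the minimal period $p\leq|w_\ell\cap\S^\ell|-1$ of $\eta\rst{(\S\setminus w_\ell)+U}$ to conclude that $j_2-j_1$ is a multiple of $p$; and then the generated-endpoint induction to propagate this single coincidence to all translates --- the step you only invoke in the other branch.  (Similarly, the period bound $|w_\ell\cap\S^\ell|-1$ for $\eta\rst{(\S\setminus w_\ell)+U}$ does not follow from ``cardinality plus the periodicity hypothesis''; it needs the window argument of Step~1 and Corollary~\ref{cor:semiinfinite}, where condition (iv) guarantees that a coloring of $\S\setminus w_\ell$ contains a full window of length $|w_\ell\cap\S^\ell|-1$ of the cross-section sequence, so that Morse-Hedlund genuinely applies.)

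In the ``otherwise'' branch, the assertion that generatedness of the top endpoint implies ``the coloring on $(\S^\ell\setminus w_\ell)+\vec u$ uniquely determines the top vertex of $w_\ell+\vec u$'' is false: generatedness of an endpoint $x$ says that a coloring of $\S^\ell\setminus\{x\}$ determines $x$, and $\S^\ell\setminus\{x\}$ still contains the rest of the edge $w_\ell$.  If your claim were literally true, every coloring of $\S\setminus w_\ell$ would extend uniquely to $\S$ and your first branch could never occur.  The correct propagation (as in Step~2 of Proposition~\ref{prop:period}) compares the colorings at $\vec u_0+q\vec v$ and $\vec u_0+q\vec v+mp\vec v$: they agree on the $(\S\setminus w_\ell)$-part by periodicity, on $w_\ell$ minus one endpoint by the inductive hypothesis at offset $q-1$ (since $(w_\ell\cap\ZZ)+\vec v$ overlaps $w_\ell\cap\ZZ$ in all but one point), and generatedness of that endpoint then forces full agreement.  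You gesture at this mechanism in your closing paragraph, but as written both branches omit it exactly where it is needed, and the Morse-Hedlund invocation cannot substitute for it; with the propagation argument inserted in both places the proof matches the paper's.
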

\begin{proof}
This follows from the Morse-Hedlund Theorem and the pigeonhole 
principle, as in Steps 2 and 3 of Proposition~\ref{prop:period}, and in 
Corollary~\ref{cor:semiinfinite}.
\end{proof}

\begin{proposition}
\label{cor:antiparallel}
Assume $\eta$ is aperiodic and there exists $n\in\N$ such that $P_{\eta}(R_{n,3})\leq3n$.  
If $\ell$ is a nonexpansive direction for $\eta$ and $\S$ is an
$\eta$-generating set, then the direction antiparallel to $\ell$ is also
nonexpansive for $\eta$.  In particular, there is an edge
$\widehat{w}_{\ell}\in E(\S)$ antiparallel to $\ell$.
\end{proposition}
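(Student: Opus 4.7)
Plan: The ``in particular'' clause follows from Proposition~\ref{parallelprop} applied in direction $\hat{\ell}$ once $\hat{\ell}$ is shown to be nonexpansive, so I focus on establishing nonexpansivity of $\hat{\ell}$ by contradiction. Assume $\hat{\ell}$ is expansive. Using the $SL_2(\Z)$ coordinate change from the proof of Proposition~\ref{cor:linedirection}, I may assume $\ell$ is vertical pointing downward, so $H(\ell) = \{(x,y) : x \geq 0\}$ and $H(\hat{\ell}) = \{(x,y) : x \leq 0\}$. Nonexpansivity of $\ell$ supplies $f, g \in X_{\eta}$ with $f \neq g$ and $f\rst{H(\ell)} = g\rst{H(\ell)}$; set $A := \{\vec u \in \ZZ : f(\vec u) \neq g(\vec u)\} \subseteq \{x \leq -1\}$. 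Aperiodicity of $\eta$ and Proposition~\ref{lemma:balanced} produce an $\ell$-balanced set $\S^{\ell} \subseteq R_{n,3}$, and the portion of the proof of Proposition~\ref{prop:period} that uses only $\S^{\ell}$ (Step~1 together with the westward induction of Step~2, but not the further extension that would require an $\hat{\ell}$-balanced set $\T^{\ell}$) shows that $f\rst{\{x \leq 0\}}$ and $g\rst{\{x \leq 0\}}$ are vertically periodic with a common period $P$; in particular, $A$ is vertically $P$-periodic.

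Let $X_A := \{x \in \Z : \exists y \in \Z,\ (x,y) \in A\}$, a nonempty subset of $\{x \leq -1\}$, and split on whether $X_A$ is bounded below.

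\textit{Bounded case.} Let $x_{\min} := \min X_A$ and choose any integer $v_1 < x_{\min}$. The pair $T^{(v_1, 0)} f, T^{(v_1, 0)} g \in X_{\eta}$ has disagreement set $A - (v_1, 0) \subseteq \{x \geq x_{\min} - v_1 > 0\}$, so the pair agrees on $H(\hat{\ell}) = \{x \leq 0\}$ while differing somewhere in $\{x > 0\}$. This witnesses nonexpansivity of $\hat{\ell}$ and contradicts our assumption.

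\textit{Unbounded case.} Encode each column of $f$ and $g$ on $\{x \leq 0\}$ as elements $C_x^f, C_x^g \in \A^P$; the pair-sequence $(C_x^f, C_x^g)_{x \leq 0}$ lives in the finite set $\A^P \times \A^P$ and the columns with $C_x^f \neq C_x^g$ are cofinal as $x \to -\infty$. Pigeonhole on longer and longer windows of consecutive column-pairs together with a diagonal extraction, combined with compactness of $X_{\eta}$, produces $f_{\infty}, g_{\infty} \in X_{\eta}$ as limits of $T^{(x_k, 0)} f, T^{(x_k, 0)} g$ along a suitable sequence $x_k \to -\infty$. Since $T^{(x_k, 0)} f$ is vertically $P$-periodic on the expanding half-plane $\{x \leq |x_k|\}$, the limit $f_{\infty}$ (and likewise $g_{\infty}$) is globally vertically $P$-periodic, and by construction $f_{\infty}(0, \cdot) \neq g_{\infty}(0, \cdot)$. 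The aim of the extraction is to further arrange that $f_{\infty}\rst{\{x \leq -1\}} = g_{\infty}\rst{\{x \leq -1\}}$; once achieved, applying $T^{(-1, 0)}$ produces a pair in $X_\eta$ that differs but agrees on $H(\hat{\ell}) = \{x \leq 0\}$, again contradicting expansivity of $\hat{\ell}$.

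I expect the main obstacle to be the unbounded case, and specifically the execution of the diagonal extraction so that the limit pair actually agrees on all of $\{x \leq -1\}$ rather than merely on finite pre-chosen windows. The key tool should be Corollary~\ref{cor:semiinfinite}: applied to appropriate shifts of $(f, g)$ with the $\ell$-balanced set $\S^{\ell}$ and the shortest parallel vector $\vec u = (0, -1)$, it promotes the bounded-alphabet recurrence of the column-pair sequence as $x \to -\infty$ to genuine eventual periodicity. Once such periodicity is in hand, pigeonhole supplies a sequence $x_k \to -\infty$ along which the disagreement column at $x_k$ is isolated, preceded by arbitrarily many consecutive agreement columns, and the resulting limit pair agrees on the required half-plane.
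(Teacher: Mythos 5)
Your bounded case is fine (it does not even need the vertical periodicity), and your restriction to Step~1 plus the westward half of Step~2 of Proposition~\ref{prop:period} to get vertical periodicity of $f,g$ on $\{x\le 0\}$ is legitimate; but the unbounded case --- which you yourself flag as the crux --- has a genuine gap that Corollary~\ref{cor:semiinfinite} cannot close. Your extraction needs disagreement columns preceded, to the west, by arbitrarily long runs of agreement columns. Nothing in the hypotheses rules out that $f$ and $g$ differ in \emph{every} column of $\{x\le-1\}$: agreement on a half-plane says nothing about the gap structure of the disagreement set. In that scenario no disagreement column has even one agreement column to its west; moreover, for any $x_k\to-\infty$ the shifted pairs $T^{(x_k,0)}f,\,T^{(x_k,0)}g$ disagree in column $-1$ for every $k$, and by vertical periodicity and pigeonhole this disagreement persists at a fixed site in any limit, so no limit pair can agree on $\{x\le-1\}$. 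Corollary~\ref{cor:semiinfinite} does not repair this: it gives eventual periodicity \emph{along} $\ell$ (vertically, inside a strip $B_\ell(\S\setminus w)$), whereas you need eventual periodicity of the transverse, horizontal sequence $x\mapsto(C^f_x,C^g_x)$ of column pairs, which is a different statement. Tellingly, your argument never uses the contradiction hypothesis that $\widehat{\ell}$ is expansive except as the target of the contradiction, so you are in effect trying to prove nonexpansivity of $\widehat{\ell}$ by pushing the witness for $\ell$ across the boundary, and the syndetic-disagreement scenario above shows this cannot succeed in general.

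The paper's proof uses the expansivity of $\widehat{\ell}$ structurally, and this is the missing ingredient. By a compactness argument, expansivity of $\widehat{\ell}$ is quantified: there exist $a,b$ such that every $\eta$-coloring of $[-a+1,0]\times[-b+1,b-1]$ determines the color at $(1,0)$, hence $a$ consecutive columns determine the next column to the east. By Proposition~\ref{prop:period} both $\tilde f$ and $\tilde g$ are vertically periodic; since they agree on a vertical half-plane but are distinct, at most one of them can be horizontally periodic, say $\tilde f$ is not. Encoding the colorings $(T^{(-i,0)}\tilde f)\rst{B_{\ell}(\S^{\ell}\setminus w_\ell)}$ as a bi-infinite word $\alpha$ over the finite alphabet supplied by vertical periodicity, the determinism above makes $\alpha(i+a)$ a function of $\alpha(i),\dots,\alpha(i+a-1)$, forcing $\alpha$ to be periodic and hence $\tilde f$ to be horizontally periodic --- the desired contradiction. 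To salvage your outline you would need to import exactly this kind of horizontal rigidity from the assumed expansivity of $\widehat{\ell}$ (or from the dichotomy that at most one of $f,g$ is horizontally periodic), rather than from Corollary~\ref{cor:semiinfinite}.
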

\begin{proof}
We proceed by contradiction.  Suppose $\ell$ is nonexpansive but the
antiparallel direction $\widehat{\ell}$ is expansive for $\eta$.
By Corollary~\ref{cor:rational}, $\ell$ is a rational line.  
Let $f,g\in X_{\eta}$ be as in Proposition~\ref{prop:period}.
For convenience assume that $\ell$ points vertically downward by
composing, if needed, with some $A\in SL_2(\Z)$.  Expansivity of
$\widehat{\ell}$ means there exist $a,b\in\N$ such that every
$\eta$-coloring of $[-a+1,0]\times[-b+1,b-1]$ extends uniquely to an
$\eta$-coloring of the larger set $[-a+1,0]\times[-b+1,b-1]\cup\{(1,0)\}$.
(Otherwise, there are rectangles $Q_R = [-R+1, 0]\times [-R+1, R-1]$ 
for every $R\geq 1$ and there exist functions $f_R, g_R\in X_\eta$ such that 
$f_R\rst{Q_R} = g_R\rst{Q_R}$ and $f_R(1,0) = g_R(1,0)$.  Passing to a 
limit we obtain $f_\infty, g_\infty\in X_\eta$ that agree on a half plane 
but disagree at $(1,0)$, contradicting expansivity.)

Then both $\tilde f = f\circ A^{-1}$ and $\tilde g = g\circ A^{-1}$ are
vertically periodic and agree on a vertical
half plane and so at most one of $\tilde f$ and $\tilde g$ is
horizontally periodic.
Without loss, assume that $\tilde f$ is not horizontally periodic. Let
$C$ be the set of $\tilde f$-colorings of the border
$B_{\ell}(\S^{\ell}\setminus w_\ell)$,
where $\S^\ell$ is an $\ell$-balanced set and $w_\ell$ is the edge of
$\S^\ell$ parallel to $\ell$.
(Note that such a set $\S^\ell$ exists by Proposition~\ref{lemma:balanced}.)  The set 
$C$ is finite because $B_{\ell}({\S}^{\ell}\setminus
w_\ell)$ is a vertical strip and $\tilde f$ is vertically periodic.  We
produce a coloring $\alpha\colon\Z\to C$ by coloring the integer $i$
with the color $(T^{(-i,0)}\tilde f)\rst{B_{\ell}(\S^{\ell}\setminus
w_\ell)}$. Since every $\eta$-coloring of $[-a+1,0]\times[-b+1,b-1]$
extends uniquely to an $\eta$-coloring of
$[-a+1,0]\times[-b+1,b-1]\cup\{(1,0)\}$, we also have that every
$\eta$-coloring of $[-a+1,0]\times(-\infty,\infty)$ extends uniquely to
an $\eta$-coloring of $[-a+1,1]\times(-\infty,\infty)$. Therefore for
any $i\in\Z$, the $\alpha$-color of $\{i,i+1,\dots,i+a-1\}$ uniquely
determines the $\alpha$-color of $i+a$.  Therefore $\alpha$ is periodic
and hence $\tilde f$ is horizontally periodic, a contradiction.  Thus 
$\widehat{\ell}$ is expansive for $\eta$.

By Proposition~\ref{parallelprop}, there is an edge
$\widehat{w}_{\ell}\in E(\S)$ antiparallel to $\ell$.
\end{proof}

\begin{corollary}\label{cor:balanced}
Assume that $\eta$ is aperiodic and there exists $n\in\N$ such that
$P_{\eta}(R_{n,3})\leq3n$.  Let $\S\subseteq R_{n,3}$ be an
$\eta$-generating set satisfying~\eqref{eq:subset}.  Then for
every nonhorizontal, nonexpansive direction $\ell$, $\S$ is $\ell$-balanced.

If $\ell$ is horizontal and nonexpansive, then $\S$ is either $\ell$-balanced or $\widehat{\ell}$-balanced, 
where $\widehat{\ell}$ is the antiparallel direction.
\end{corollary}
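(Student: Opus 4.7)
The plan is to verify the four defining properties of a balanced set from Definition~\ref{def:balanced} for $\S$, with the chosen orientation $\ell$ (or $\widehat\ell$ in the horizontal case).  Properties~(1)--(3) come essentially for free: Proposition~\ref{parallelprop} supplies an edge $w\in E(\S)$ parallel to $\ell$; the fact that $\S$ is an $\eta$-generating set makes both endpoints of $w$ automatically $\eta$-generated, which is property~(2); and Corollary~\ref{generatingequation} furnishes $D_\eta(\S\setminus w)>D_\eta(\S)$, which is property~(3).  The real work is property~(4), that every line parallel to $\ell$ meeting $\S$ contains at least $|w\cap\S|-1$ integer points.  At the outset I would invoke Proposition~\ref{cor:antiparallel} (using that $\eta$ is aperiodic) to produce an edge $\widehat w\in E(\S)$ antiparallel to $\ell$, which pins $\S$ inside the strip bounded by the two parallel lines through $w$ and $\widehat w$.

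For the nonhorizontal case, a line parallel to $\ell$ meets $R_{n,3}$ in at most three integer points, forcing $|w\cap\S|,|\widehat w\cap\S|\in\{2,3\}$.  The subcase $|w\cap\S|=2$ makes property~(4) trivial.  When $|w\cap\S|=3$, the edge $w$ meets each of the rows $y=0,1,2$ and $\ell$ has direction vector $(a,1)$ (allowing $a=0$ in the vertical case).  I would parameterize the parallel lines by the integer $t$ with $L_t\colon x-ay=t$, whose three potential integer points in $R_{n,3}$ are $(t,0)$, $(t+a,1)$, and $(t+2a,2)$.  Letting $I_i$ denote the integer interval of $t$-values for which the row-$i$ point lies in $\S$ (convexity of $\S$ makes each $I_i$ an interval), the hypothesis $|w\cap\S|=3$ forces $I_0,I_1,I_2$ to share a common left endpoint, while $|\widehat w\cap\S|\geq 2$ forces at least two of them to reach the common right endpoint.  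A counting argument then shows that at least two of the three intervals contain every integer $t$ in the relevant range, giving property~(4).

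For the horizontal case, the extreme horizontal edges $w,\widehat w$ lie on distinct rows, so $\S$ occupies two or three rows.  In the two-row subcase one observes that the two desired inequalities (that the opposite row contain at least $|w\cap\S|-1$ or at least $|\widehat w\cap\S|-1$ integer points) cannot both fail, so at least one of $\ell$- or $\widehat\ell$-balanced holds.  In the three-row subcase, writing $k_0,k_1,k_2$ for the three row sizes and assuming without loss that $k_0\geq k_2$, I would deduce from concavity of the real-valued width of $\conv(\S)$ that $k_1\geq k_2-1$; combined with the trivial $k_0\geq k_2-1$, this shows that $\S$ is $\widehat\ell$-balanced (the balancing being around $\widehat w$, which sits on the smaller extreme row).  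The main obstacle I expect is in this last step: passing from the real convexity inequality $w(1)\geq(w(0)+w(2))/2$ to the integer bound $k_1\geq k_2-1$ requires care, since the midpoint of integer $x$-coordinates may be a half-integer and the ensuing rounding from widths to integer-point counts can cost one, so one must verify that the slack afforded by $k_0\geq k_2$ absorbs any such loss.
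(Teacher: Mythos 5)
Your proposal is correct and follows essentially the same route as the paper: properties (1)--(3) come from Proposition~\ref{parallelprop}, the generating property of $\S$, and~\eqref{eq:subset}, while property (4) is obtained from the antiparallel edge supplied by Proposition~\ref{cor:antiparallel} together with convexity of $\S$ within the three rows (and the horizontal case by comparing the two horizontal edges and bounding the middle row via concavity of the width function), exactly as in the paper's argument. The rounding issue you flag is harmless: a closed horizontal segment of length $L$ contains at least $\lceil L\rceil$ integer points, and only the bound $k_1\geq k_2-1$ is needed, which follows directly from $k_1\geq w(1)\geq\min\bigl(w(0),w(2)\bigr)=k_2-1$.
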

\begin{proof}
Assume that $\ell$ is a nonhorizontal and nonexpansive direction.  
We check the four conditions of Definition~\ref{def:balanced}.
The first condition follows from Proposition~\ref{parallelprop}, the
second is immediate from the definition of an $\eta$-generating set and
the third follows since $\S$ satisfies~\eqref{eq:subset}. If
$|w\cap\S|=2$, then the fourth condition follows since every
line with nonempty intersection with $\S$ intersects in at least one
point.  If $|w\cap\S|=3$, then $\ell$ is either vertical or
determines a line with slope of the form $1/a$ for some integer $a>0$.
By Proposition~\ref{cor:antiparallel}, there exists $w_{\widehat{\ell}}\in
E(\S)$ antiparallel to $\ell$.  Since both endpoints of $w_{\widehat{\ell}}$
are boundary vertices of $\S$,
$|w_{\widehat{\ell}}\cap\S|\geq2$.  Therefore any line parallel
to $\ell$ that has nonempty intersection with $\S$, intersects $\S$ in at
least two integer points.

If $\ell$ is horizontal, let $n$ be the smaller of the number of integer points on 
the top and bottom edges of $\S$.  By convexity of $\S$, the middle line 
has length $r\geq n$ for some $r\in \R$.  Thus the middle line contains at least 
$\lfloor r\rfloor \geq n$ integer points, and so $\S$ is balanced for either 
$\ell$ or $\widehat{\ell}$.  
\end{proof}

\section{complexity with multiple nonexpansive lines}
In this section, we show that the complexity assumption of the existence of $n\in\N$ such that $P_{\eta}(R_{n,3})\leq3n$ is incompatible with the existence of more than one nonexpansive line for $\eta$.  

We assume throughout that:
\begin{align}
\label{eq:H1}
\tag{H1}
& X_{\eta} \text{ has at least two nonexpansive lines.}\\
\label{eq:H2}
\tag{H2}
& \text{There exists } n\in\N \text{ such that } P_{\eta}(n,3)\leq3n.
\end{align}

If $\eta$ is periodic, let $\vec u\in\ZZ$ be a period vector and consider any line $\ell$ that is not parallel to $\vec u$.  By taking a neighborhood of $\ell$ wide enough to include $\ell\pm\vec u$, 
we have that $\ell$ is expansive.  Thus every line apart from possibly the direction determined by $\vec u$ is expansive, so there is at most one nonexpansive line. 
Thus Hypothesis~\eqref{eq:H1} implies that 
\begin{equation}
\label{eq:aperiodic}
\eta \text{ is aperiodic.}
\end{equation}

We begin with some general facts about the shape of an $\eta$-generating set.  
By Proposition~\ref{parallelprop}, if $\S$ is an $\eta$-generating set,
then the boundary $\partial\S$ contains an edge parallel to each
nonexpansive direction.  By Proposition~\ref{cor:antiparallel}, whenever
$\ell$ is a nonexpansive direction, the direction antiparallel to $\ell$
is also a nonexpansive direction.  Since $\S\subseteq R_{n,3}$,
$\partial\S$ cannot consist of more than six edges (at most two edges are 
horizontal and the others connect integer points in $R_{n,3}$ with 
different $y$-coordinates).  Thus there are at most
three nonexpansive lines for $\eta$, and each orientation on each line
determines a nonexpansive direction. Furthermore, by 
Proposition~\ref{parallelprop}, we can assume that all of the 
nonexpansive lines are rational lines through the origin.

We start with a construction of  a large convex set that 
is used in Propositions~\ref{exactlytwo} and~\ref{exactlythree} to show that $\eta$ cannot have multiple nonexpansive lines while also having low complexity.

As noted, we have at most three nonexpansive lines for $\eta$.  
Let 
\begin{equation}
\label{eq:lines}
\ell_1, \ell_2\subset\R^2 \text{ or } \ell_1, \ell_2, \ell_3\subset\R^2 \text{ denote 
the nonexpansive lines for } \eta,
\end{equation}
depending if there are $2$ or $3$ nonexpansive 
lines. We write all statements for three nonexpansive lines, with the implicit 
understanding that when there are only $2$ nonexpansive lines, we remove any reference 
to $\ell_3$.  

Without loss of generality, we can assume that all $\ell_i$ pass through the origin.  
By Corollary~\ref{cor:rational}, we can assume that the nonexpansive lines are rational lines and without loss we can assume that $\ell_1, \ell_3$ are not horizontal.  
By Proposition~\ref{cor:linedirection}, there exist orientations on $\ell_1, \ell_2, \ell_3$ that determine nonexpansive directions for $\eta$.  
For the remainder of this construction, we make a slight abuse of notation and view $\ell_1, \ell_2, \ell_3$ as directed lines that determine nonexpansive directions.

Let $\S\subseteq R_{n,3}$ be an $\eta$-generating set.  By Proposition~\ref{prop:generating-sets}, there exist edges $w_1, w_2, w_3\in E(\S)$ parallel to $\ell_1, \ell_2, \ell_3$, respectively.  By Proposition~\ref{cor:antiparallel}, there exist $\widehat{w}_1, \widehat{w}_2, \widehat{w}_3\in E(\S)$ such that $w_i$ is antiparallel to $\widehat{w}_i$, for $i=1,2,3$.  By Corollary~\ref{cor:balanced}, since $w_1$ and $w_3$ are not 
horizontal, we have that $\S$ is $w_1, \widehat{w}_1, w_3$ and $\widehat{w}_3$-balanced.  If $w_2$ is not horizontal, then again applying  Corollary~\ref{cor:balanced}, we have that $\S$ is both $w_2$ and $\widehat{w}_2$-balanced.  If $w_2$ is horizontal, then $\S$ is balanced for 
at least one of $w_2$ and $\widehat{w}_2$.  So, without loss, we can assume that
$$
\text{$\S$ is $w_1, \widehat{w}_1,w_3, \widehat{w}_3$ and $w_2$-balanced.}
$$

Let $H^{\prime}_0$ denote the half-plane through the origin determined by $\ell_1$.  Let $H^{\prime}_{-1}$ be the smallest half-plane strictly containing $H^{\prime}_0$ whose boundary contains an integer point (this is well-defined since $\ell_1$ is a rational line).  Since $\ell_1$ is a nonexpansive direction, there exist $f,g\in X_{\eta}$ such that $f\rst{H^{\prime}_0}=g\rst{H^{\prime}_0}$ but $f\rst{H^{\prime}_{-1}}\neq g\rst{H^{\prime}_{-1}}$.  
Since $\ell_2$ is not parallel to $\ell_1$ and $f\rst{H^{\prime}_0}=g\rst{H^{\prime}_0}$, 
at most one of $f\rst{H^{\prime}_{-1}}$ and $g\rst{H^{\prime}_{-1}}$ extends to a $\ZZ$-coloring that is periodic with period vector parallel to $\ell_2$.  
Without loss of generality, suppose $f\rst{H^{\prime}_{-1}}$ is an $\eta$-coloring of $H^{\prime}_{-1}$ which cannot be extended to a periodic $\eta$-coloring of $\ZZ$ with a period vector parallel to $\ell_2$.  By Proposition~\ref{prop:period}, $f$ is periodic with period vector parallel to $\ell_1$.  Translating if needed, we can assume that $(w_1\cap\ZZ)\subset H^\prime_{-1}\setminus H^\prime_0$.  It follows that $\S\subset H^\prime_{-1}$ (recall that the boundaries of both $\S$ and $H^\prime_{-1}$ are positively oriented).

To make the constructions clearer, it is convenient to make a change of coordinates such that $\ell_1$ points vertically downward.  
Thus choose $A\in SL_2(\Z)$ such that $A(\ell_1)$ points vertically downward.  Define 
\begin{equation}
\label{eq:tildes}
\tilde{\eta}:=\eta\circ A^{-1}; 
\tilde{f}:=f\circ A^{-1};
\tilde{\S}:=A(\S),
\end{equation}
and 
\begin{equation}
\label{eq:moretilde}
\tilde{\ell}_i:=A(\ell_i) \text{, } \tilde{w}_i:=A(w_i) \text{ and }\widehat{\tilde{w}_i}:=A(\widehat{w}_i), \text{ for } i=1,2,3.  
\end{equation}
 Then for any finite, nonempty set $\T\subset\ZZ$, we have $D_{\eta}(\T)=D_{\tilde{\eta}}(A(\T))$.  It follows that $\tilde{\eta}$ is aperiodic and 
 \begin{equation}
 \label{eq:tildef-periodic}
 \tilde{f} \text{ is vertically periodic  with minimal period } p \text{ and is not doubly periodic.}
 \end{equation}
 Further, 
 \begin{equation}
 \label{eq:Stilde-generating}
 \tilde{\S} \text{ is an } \tilde{\eta}\text{-generating set }
 \end{equation}
 and
 \begin{equation}
 \label{eq:Stilde-balanced}
 \tilde{\S} \text{ is } \tilde{w}_1, \widehat{\tilde{w}_1}, \tilde{w}_3, \widehat{\tilde{w}_3}\text{-balanced and is balanced for at least one of } \tilde{w}_2 \text{ and } \widehat{\tilde{w}_2}.
 \end{equation}

For $i\in\Z$, define
$$
H_i:=\left\{(x,y)\in\ZZ\colon x\geq i\right\}.
$$
Note that $H_0=A(H^{\prime}_0)$ and $H_{-1}=A(H^{\prime}_{-1})$.  
For $i\in\Z$, let $B_i$ be a vertical strip of width $i$ defined by 
\begin{equation}
\label{def:Bi}
B_i:=H_{-1}\setminus H_{i-2}
\end{equation}
and $\bar{B}_i$ be the vertical sub-strip of width $i-1$ defined by 
$$\bar{B}_i:=H_0\setminus H_{i-2}.$$ 
Let $d\in\N$ be the number of distinct vertical lines passing through $\tilde{\S}$ and note that $\tilde{\S}\subset B_d$ and $(\tilde{\S}\setminus w_1)\subset\bar{B}_d$.  

We claim there are infinitely many integers $x\geq 0$ such that
\begin{equation}
\label{eq:non-unique}
\tilde{f}\rst{\bar{B}_d+(x,0)} \text{ does not extend uniquely to an } \eta\text{-coloring of } B_d+(x,0).
\end{equation}  
By construction, $x=0$ is such an integer.  
If there are not infinitely many such integers, let $x_{\max}$ denote the largest such integer.  
By~\eqref{eq:tildef-periodic}, $\tilde{f}$ is vertically periodic and there are only finitely many colorings of the form $(T^{(x,0)}\tilde{f})\rst{\bar{B}_d}$; say there are $P$ such colorings.  
By the pigeonhole principle, there are distinct integers $x_1, x_2\in\{x_{\max}+1,\dots,x_{\max}+P+2\}$ such that 
$$(T^{(x_1,0)}\tilde{f})\rst{\bar{B}_d}=(T^{(x_2,0)}\tilde{f})\rst{\bar{B}_d};$$ 
without loss assume that $x_1\geq x_{\max}$ is the smallest integer for which there exists $x_2$ with this property.  Since $(T^{(x_2,0)}\tilde{f})\rst{\bar{B}_d}$ extends uniquely to an $\eta$-coloring of $B_d$, so does $(T^{(x_1,0)}\tilde{f})\rst{\bar{B}_d}$.  Therefore 
$$(T^{(x_1-1,0)}\tilde{f})\rst{\bar{B}_d}=(T^{(x_2-1,0)}\tilde{f})\rst{\bar{B}_d}.$$  
Since $x_2-1>x_{\max}$, we have that $(T^{(x_2-1,0)}\tilde{f})\rst{\bar{B}_d}$ extends 
uniquely to an $\eta$-coloring of $B_d$.  Thus so does $(T^{(x_1-1,0)}\tilde{f})\rst{\bar{B}_d}$, 
and in particular, we have that $x_1-1>x_{\max}$.  However, this contradicts the choice of $x_1$ 
as the smallest integer with this property and the claim follows.  

Let $0=x_1<x_2<x_3<\ldots$ be a sequence integers satisfying~\eqref{eq:non-unique}.  
Then since $\tilde{\S}$ is $\tilde{w}_1$-balanced by~\eqref{eq:Stilde-balanced} and 
for all $i\in\N$, $f\rst{A^{-1}(\bar{B}_d+(x_i,0))}$ satisfies condition~\eqref{first} in Proposition~\ref{prop:period} and so it has period at most $|w_1\cap\S|-1=|\tilde{w}_1\cap\tilde{\S}|-1$.  
It follows that for all $i\in\N$, $\tilde{f}\rst{\bar{B}_d+(x_i,0)}$ is vertically periodic of period at most $|\tilde{w}_1\cap\tilde{\S}|-1$.

\begin{claim}
\label{claim:infinite}
For all $i\geq d$, there is no finite set $F_i\subset B_i$ such that every $\tilde{\eta}$-coloring of the form $(T^{(0,j)}\tilde{f})\rst{F_i}$ extends uniquely to an $\tilde{\eta}$-coloring of $B_i$. 
\end{claim}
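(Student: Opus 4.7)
The plan is a proof by contradiction using the pair $\tilde f,\tilde g\in X_{\tilde\eta}$ constructed above, for which $\tilde f\rst{H_0}=\tilde g\rst{H_0}$ but $\tilde f\neq\tilde g$ on the column $\{x=-1\}\subset B_i$. Suppose a finite $F_i\subset B_i$ with the stated unique extension property exists. If $F_i\cap(\{-1\}\times\Z)=\emptyset$, i.e.\ $F_i\subset H_0$, the contradiction is immediate: $\tilde g\rst{F_i}=\tilde f\rst{F_i}$, so by the unique extension hypothesis applied at $j=0$ we would have $\tilde g\rst{B_i}=\tilde f\rst{B_i}$, contradicting the disagreement of $\tilde f$ and $\tilde g$ on $\{x=-1\}\subset B_i$.

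The substantive case is $F_i\cap(\{-1\}\times\Z)\neq\emptyset$. Here I would exploit the sequence $0=x_1<x_2<\cdots$ satisfying~\eqref{eq:non-unique} together with compactness. For each $m$, pick $\tilde h_m\in X_{\tilde\eta}$ agreeing with $\tilde f$ on $\bar B_d+(x_m,0)$ but differing at some $(x_m-1,y_m)$. Reducing $y_m$ modulo the period $p$ of $\tilde f$ and passing to a subsequence, arrange $y_m$ to be a fixed value $y^*$. Since $\{(T^{(0,j)}\tilde f)\rst{F_i}:j\in\Z\}$ is finite (by vertical periodicity of $\tilde f$), a further subsequence yields a fixed pattern: $(T^{(-x_m,0)}\tilde f)\rst{F_i}=(T^{(0,j_0)}\tilde f)\rst{F_i}$ for some $j_0\in\Z$. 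By compactness of $X_{\tilde\eta}$, extract sub-subsequential limits $\tilde f^*=\lim_k T^{(-x_{m_k},0)}\tilde f$ and $\tilde h^*=\lim_k T^{(-x_{m_k},0)}\tilde h_{m_k}$, both lying in $X_{\tilde\eta}$, with $\tilde f^*\rst{F_i}=(T^{(0,j_0)}\tilde f)\rst{F_i}$, $\tilde f^*=\tilde h^*$ on $\bar B_d$, and $\tilde f^*(-1,y^*)\neq\tilde h^*(-1,y^*)$.

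The hard part will be promoting the equality $\tilde f^*=\tilde h^*$ from $\bar B_d$ to all of $F_i$---specifically to the points of $F_i$ on the column $\{x=-1\}$ other than $(-1,y^*)$ and, when $i>d$, to any points of $F_i$ on columns $d-2\le x\le i-3$. For the latter I would strengthen the non-unique-extension statement~\eqref{eq:non-unique} to a wider strip by using the balancedness of $\tilde\S$ together with iterated applications of Corollary~\ref{modifiedgeneratingcor} (as in the construction of the $x_m$'s themselves). For the former I would refine the subsequence further so that $\tilde f^*$ and $\tilde h^*$ agree at the finitely many prescribed $y$-coordinates distinct from $y^*$, invoking vertical periodicity and another round of pigeonhole. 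Once $\tilde h^*\rst{F_i}=\tilde f^*\rst{F_i}=(T^{(0,j_0)}\tilde f)\rst{F_i}$ is established while $\tilde h^*\rst{B_i}\neq\tilde f^*\rst{B_i}$, applying the unique extension hypothesis at shift $j_0$ produces two distinct $\tilde\eta$-colorings of $B_i$ extending the same pattern on $F_i$, yielding the desired contradiction.
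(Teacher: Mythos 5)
Your reduction to the case $F_i\cap(\{-1\}\times\Z)\neq\emptyset$ is fine, but the main case has two genuine gaps. First, the step ``$(T^{(-x_m,0)}\tilde f)\rst{F_i}=(T^{(0,j_0)}\tilde f)\rst{F_i}$ for some $j_0$'' is unjustified: by~\eqref{eq:tildef-periodic} $\tilde f$ is vertically periodic but \emph{not} doubly periodic, so a horizontal translate of $\tilde f$ restricted to $F_i$ need not coincide with any vertical translate of $\tilde f\rst{F_i}$. Pigeonhole lets you make the pattern $(T^{(x_{m_k},0)}\tilde f)\rst{F_i}$ constant along a subsequence, but not of the special form $(T^{(0,j)}\tilde f)\rst{F_i}$, and the hypothesis of the Claim applies \emph{only} to colorings of that form; so the unique-extension hypothesis never gets applied to your limits $\tilde f^*,\tilde h^*$. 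Second, the ``hard part'' you flag is exactly where the approach breaks and the proposed patches do not close it: \eqref{eq:non-unique} provides, for each $x_m$, a disagreement \emph{somewhere} on the column $x=x_m-1$, with no control of its location relative to the recentered points of $F_i$ on the column $x=-1$; indeed $\tilde h_m$ may disagree with $\tilde f$ precisely at those points, in which case $\tilde h^*\rst{F_i}\neq\tilde f^*\rst{F_i}$ and no contradiction is available. Vertical periodicity and further pigeonholing can only make $\tilde h_{m_k}\rst{F_i}$ constant along a subsequence, not equal to the target pattern. (Widening the agreement region from $\bar B_d$ to $\bar B_i$ is the unproblematic part --- the pigeonhole proof of~\eqref{eq:non-unique} works verbatim for any width, no appeal to balancedness or Corollary~\ref{modifiedgeneratingcor} needed --- but the column $x=-1$ points remain the obstruction.)

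The missing idea is that one should not try to manufacture a second extension at all: the paper uses the hypothesized unique extension \emph{positively}, to transfer periodicity into $\tilde\eta$. Since $\tilde f\in X_{\tilde\eta}$, the finite pattern $\tilde f\rst{F_i}$ occurs in $\tilde\eta$, say $\tilde f\rst{F_i}=(T^{\vec u}\tilde\eta)\rst{F_i}$; the unique-extension hypothesis (with $j=0$ and $\beta=T^{\vec u}\tilde\eta$) then forces $(T^{\vec u}\tilde\eta)\rst{B_i}=\tilde f\rst{B_i}$, which is vertically periodic by~\eqref{eq:tildef-periodic}; Corollary~\ref{cor:periodicstripimpliesperiodic} upgrades periodicity of $\tilde\eta$ on this strip to periodicity of $\eta$, contradicting~\eqref{eq:aperiodic}. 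Your first case is essentially an instance of this use of the hypothesis; to salvage the proof, replace the compactness construction in the main case by this periodicity-transfer argument.
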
 
If not, suppose $F_i\subset B_i$ is a finite set and for all $j\in\Z$ the coloring $(T^{(0,j)}\tilde{f})\rst{F_i}$ extends uniquely to an $\eta$-coloring of $B_i$.  Since $\tilde{f}\in X_{\tilde{\eta}}$, 
there exists $\vec u\in\ZZ$ such that 
$$
\tilde{f}\rst{F_i}=(T^{\vec u}\tilde{\eta})\rst{F_i}, 
$$
where the existence of $\vec u$ follows from the fact that every finite pattern occurring in an element of $X_{\tilde{\eta}}$ also occurs in $\tilde{\eta}$.  Therefore $(T^{\vec u}\tilde{\eta})\rst{B_i}=\tilde{f}\rst{B_i}$ is vertically periodic.  By Corollary~\ref{cor:periodicstripimpliesperiodic}, we have that $\tilde{\eta}$ is periodic and thus that $\eta$ is periodic, a contradiction of~\eqref{eq:aperiodic}.  The claim follows.

To describe the large set we construct, we define: 
\begin{definition}
If $\S\subseteq\ZZ$ is a convex set, then $\T\subseteq\ZZ$ is {\em $E(\S)$-enveloped} if
\begin{enumerate}
\item $\T$ is convex;
\item For all $w\in E(\T)$, there exists $u\in E(\S)$ such that $w$ is parallel to $u$ and $|w|\geq|u|$.
\end{enumerate}
\end{definition}

Maintaining notation of $\tilde{f}$ and $\tilde{S}$ defined in~\eqref{eq:tildes} and $B_1$ defined in~\eqref{def:Bi}, we inductively define a convex set $G_\infty$ on which we can control periodicity.  For each $i\in\N$, let 
\begin{equation}\label{it:one}
\begin{tabular}{l}
$F_i\subset B_{d+i-1}$ be a finite, $E(\tilde{\S})$-enveloped set \\
containing $[-1,i-1]\times[-d-i-1,d+i+1]$.
\end{tabular}
\end{equation}
and let
\begin{equation}\label{it:two}
\begin{tabular}{l}
$G_i\subseteq B_{d+i-1}$ be the largest $E(\tilde{\S})$-enveloped \\
set to which $\tilde{f}\rst{F_i}$ extends uniquely
\end{tabular}
\end{equation}
(we allow the possibilities that $G_i=F_i$ or that $G_i$ is infinite). 

By Claim~\ref{claim:infinite},  
$G_j\neq B_j$ and so the set 
\begin{equation}
\label{eq:G-infty}
G_j\cap\{(-1,y)\colon y\in\Z\}
\end{equation}
is semi-infinite.  This semi-infinite line either has an element of maximal $y$-coordinate or of minimal $y$-coordinate.  Therefore there is either a subsequence $\{j_k\}_{k=0}^{\infty}$ such that $G_{j_k}\cap\{(-1,y)\colon y\in\Z\}$ has an element of maximal $y$-coordinate for all $k$ or there is a subsequence such that $G_{j_k}\cap\{(-1,y)\colon y\in\Z\}$ has an element of minimal $y$-coordinate for all $k$.  Without loss of generality (the other case being similar), suppose that there are infinitely many $j\in\N$ such that the set $G_j\cap\{(-1,y)\colon y\in\Z\}$ has an element of maximal $y$-coordinate.  Without loss (passing to a subsequence if necessary) we assume $G_j \cap\{(-1,y)\colon y\in\Z\}$ has an element of maximal $y$-coordinate for all $j\in\N$ and let $y_j^{\max}$ be this $y$-coordinate.  By~\eqref{eq:tildef-periodic}, $\tilde{f}$ is vertically periodic with minimal period $p$.  There exists $0\leq J_{\max}<p$ such that for infinitely many $j$, $y_j^{\max}\equiv J_{\max}$ (mod $p$).  Passing to this subsequence and maintaining the same notation on indices $j$, for each such $j$, let $k_j\in\Z$ be such that $y_j^{\max}=k_j\cdot p+J_{\max}$.  By periodicity, $\tilde{f}\rst{G_j-(0,k_j\cdot p)}$ does not extend uniquely to an $\eta$-coloring of any larger convex set and the point $(-1,J_{\max})$ is the top-most element of $\{(-1,y)\colon y\in\Z\}\cap(G_j-(0,k_j\cdot p))$ for all $j$.

Set 
\begin{equation}
\label{def:G-inf}
G_{\infty}:=\bigcup_j\left(G_j-(0,k_j\cdot p)\right).
\end{equation}
If necessary, we again make a change of 
coordinates and assume that $J_{\max}=0$.  Thus
\begin{equation}
\label{eq:G-infty-one}
\text{$G_{\infty}$ is an $E(\tilde{\S})$-enveloped set that intersects every vertical line in $H_{-1}$.}
\end{equation}
By construction, $E(G_{\infty})$ has a semi-infinite edge that points vertically downward from $(0,0)$.  By~\eqref{eq:G-infty-one},
\begin{equation}
\label{def:u}
G_\infty \text{ has a nonhorizontal, semi-infinite edge } u\in E(G_{\infty})
\end{equation}
and  $u$ is parallel to some edge in $E(\tilde\S)$.
This edge determines a nonexpansive direction for $\tilde{\eta}$, since $\tilde{f}\rst{G_{\infty}}$ cannot be uniquely extended to any larger $E(\tilde\S)$-enveloped set.

Define $K\supset G_{\infty}$ by taking 
\begin{equation}
\label{def:K}
K \text{ is the smallest } E(\tilde{\S})\text{-enveloped set containing } G_{\infty} \text{ with }
u\notin \partial K, 
\end{equation}
meaning that $K$ is the set obtained by extending the successor edge to $u$ backwards until it intersects an integer point and then taking the convex hull (note that successor 
edge is meant with respect to positive orientation on the boundary).  By construction, 
\begin{equation}
\label{eq:h-exists}
\text{ there exists } \tilde{h}\in X_{\tilde{\eta}} \text{ such that } \tilde{f}\rst{G_{\infty}}=\tilde{h}\rst{G_{\infty}} \text{ and } \tilde{f}\rst{K}\neq\tilde{h}\rst{K}. 
\end{equation}
By~\eqref{eq:tildef-periodic}, $\tilde{f}$ is vertically periodic and so 
\begin{equation}
\label{eq:h-periodic}
\tilde{h}\rst{G_\infty} \text{ is vertically periodic (with minimal period p) but $\tilde{h}\rst{K}$ is not.}
\end{equation}

We use the construction of $G_\infty$ to eliminate the case of $2$ nonexpansive lines: 
\begin{proposition}
\label{exactlytwo}
Suppose there are exactly two nonexpansive lines for $X_{\eta}$.  Then for all $n\in\N$, $P_{\eta}(R_{n,3})>3n$.
\end{proposition}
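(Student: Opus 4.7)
I argue by contradiction. Assume $P_\eta(R_{n,3})\leq 3n$. The construction preceding the proposition supplies an $\tilde\eta$-generating set $\tilde\S\subseteq R_{n,3}$, elements $\tilde f,\tilde h\in X_{\tilde\eta}$ with $\tilde f$ vertically periodic of minimal period $p$ and not doubly periodic, a convex set $G_\infty$ on which $\tilde f\rst{G_\infty}=\tilde h\rst{G_\infty}$, the enlargement $K\supsetneq G_\infty$ on which $\tilde f\rst{K}\neq\tilde h\rst{K}$, and the nonhorizontal semi-infinite edge $u\in E(G_\infty)$. The goal is to show that $\tilde f$ is doubly periodic, contradicting \eqref{eq:tildef-periodic}.

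\textit{Step 1 (identify the direction of $u$).} By \eqref{def:u}, $u$ is parallel to some edge of $\tilde\S$, and by Proposition~\ref{parallelprop} every edge of an $\tilde\eta$-generating set is parallel to a nonexpansive direction for $\tilde\eta$. Since there are exactly two nonexpansive lines, $\tilde\ell_1$ (vertical) and $\tilde\ell_2$, the edge $u$ is parallel to one of them. I rule out the vertical case by convex geometry: $G_\infty$ already has a vertical semi-infinite downward boundary ray coming from the construction, and a second parallel vertical semi-infinite edge would force $G_\infty$ to lie in a vertical strip (or, if the two rays point in opposite directions, to be invariant under a full vertical translation). Each possibility contradicts either \eqref{eq:G-infty-one} (which says $G_\infty$ meets every vertical line in $H_{-1}$, hence extends without bound in the positive $x$ direction) or Claim~\ref{claim:infinite} (which rules out $G_j=B_j$ and hence $G_\infty=H_{-1}$). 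Therefore $u$ is parallel to $\tilde\ell_2$, and $\tilde\ell_2$ is nonhorizontal.

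\textit{Step 2 (periodicity of a strip for $\tilde f$ parallel to $\tilde\ell_2$).} By \eqref{eq:Stilde-balanced}, $\tilde\S$ is $\tilde w_2$-balanced, possibly after replacing $\tilde w_2$ by $\widehat{\tilde w_2}$. Let $\vec v\in\ZZ$ be the shortest integer vector parallel to $u$. Mimicking Step~1 of the proof of Proposition~\ref{prop:period}, I claim that for infinitely many integers $m$ along the infinite direction of $u$, the restriction $(T^{m\vec v}\tilde f)\rst{\tilde\S\setminus\tilde w_2}$ fails to extend uniquely to an $\tilde\eta$-coloring of $\tilde\S$. Indeed, if unique extension held for all but finitely many such $m$, then the boundary vertex of $\tilde w_2$ generated by $\tilde\S$, translated by $m\vec v$, could be iteratively added to $G_\infty$ while preserving $E(\tilde\S)$-envelopment, producing a strictly larger $E(\tilde\S)$-enveloped set to which $\tilde f\rst{G_\infty}$ extends uniquely; this contradicts the maximality built into the definition of $G_\infty$ and the existence of $\tilde h$ with $\tilde f\rst{K}\neq\tilde h\rst{K}$. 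Corollary~\ref{cor:bound} bounds by $|\tilde w_2\cap\tilde\S|-1$ the number of distinct colorings of $\tilde\S\setminus\tilde w_2$ whose extension to $\tilde\S$ is non-unique, so the one-sided Morse-Hedlund Theorem~\cite{MH} yields that $\tilde f$ restricted to the thinnest strip parallel to $\tilde\ell_2$ containing $\tilde\S\setminus\tilde w_2$ is periodic with period vector parallel to $\tilde\ell_2$.

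\textit{Step 3 (contradiction).} By Corollary~\ref{cor:periodicstripimpliesperiodic}, $\tilde f$ is periodic with period vector parallel to $\tilde\ell_2$. Combined with vertical periodicity and $\tilde\ell_1\neq\tilde\ell_2$, this makes $\tilde f$ doubly periodic, contradicting \eqref{eq:tildef-periodic}.

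The main obstacle is Step~2: whereas Step~1 of Proposition~\ref{prop:period} extracts non-unique extensions from a genuine half-plane of agreement, here the agreement region is only the convex set $G_\infty$ with $u$ as a boundary edge. One must use the semi-infiniteness of $u$ together with the $E(\tilde\S)$-enveloped maximality of $G_\infty$ (and a vertical-periodicity-based propagation if needed) to produce the infinite family of translates required to invoke Morse-Hedlund.
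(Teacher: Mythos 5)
Your Step~2 is the heart of the matter, and it is not established; the difficulty you flag in your closing paragraph is precisely where the proposal breaks. Two problems. First, ``infinitely many $m$ with non-unique extension'' is not enough to run Morse--Hedlund: the counting in Step~1 of Proposition~\ref{prop:period} (and in Corollaries~\ref{cor:bound} and~\ref{cor:semiinfinite}) needs \emph{every} window $(T^{m\vec v}\tilde f)\rst{\tilde\S\setminus\tilde w_2}$ in the relevant range to lie in the set of at most $|\tilde w_2\cap\tilde\S|-1$ colorings that extend non-uniquely; with failures only along some infinite set, the intermediate windows are unconstrained and no complexity bound on the strip follows. Second, your proposed source of these failures --- ``unique extension at all but finitely many $m$ would contradict the maximality of $G_\infty$'' --- does not work: each $G_j$ is maximal only inside the strip $B_{d+j-1}$, and the only global non-extension statement the construction provides is~\eqref{eq:h-exists}, which says the extension of $\tilde f\rst{G_{\infty}}$ to the whole set $K$ (several full lines) is non-unique. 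That does not localize to any single translate of $\tilde\S$: unique extension at one position adjacent to the semi-infinite edge $u$ propagates (via the generated endpoints of $\tilde w_2$) only along a semi-infinite portion of the first new line, and $\tilde h$ may still differ from $\tilde f$ near the finite end of $u$ or on the remaining lines of $K\setminus G_{\infty}$, so no contradiction with~\eqref{eq:h-exists} arises. Even granting Step~2, Step~3 overreaches: you would obtain only \emph{eventual} periodicity on a semi-infinite piece of a strip adjacent to $u$, whereas Corollary~\ref{cor:periodicstripimpliesperiodic} needs periodicity on a full bi-infinite strip, and vertical periodicity of $\tilde f$ does not transport that sliver to the whole strip, since vertical translates of the sliver leave the strip.

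The paper's proof avoids exactly these traps by never attempting to prove non-unique extension for $\tilde f$ along $u$. It works instead with the witness $\tilde h$ of~\eqref{eq:h-exists}: it shows $\tilde h$ is vertically periodic on the increasing sets $L_0=G_\infty\subset L_1\subset\cdots\subset L_I$ obtained by extending the edges of $G_\infty$ (the intermediate edge directions are expansive because there are only two nonexpansive lines, so colors propagate line by line, using Corollaries~\ref{cor:semiinfinite} and~\ref{cor:semiinfiniteperiodbounds}), while $\tilde h\rst{K}$ is \emph{not} vertically periodic by~\eqref{eq:h-periodic}. Hence the maximal index $J$ with $C_J$ inside the vertically periodic region is finite, and a Morse--Hedlund count in the strips $C_j$ parallel to $\tilde\ell_2$ (using that $\tilde h\rst{C_j}$ cannot be $\tilde\ell_2$-periodic, by the choice of $f$) forces unique extension from $C_J$ to $C_{J+1}$ and pushes vertical periodicity past $J$; the contradiction is with the definition of $J$, not with double periodicity of $\tilde f$. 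One further point: your Step~1 claim that $\tilde\ell_2$ is nonhorizontal conflates the two coordinate systems. The edge $u$ is nonhorizontal in the $A$-coordinates, but $\ell_2$ may well be horizontal in the original ones; in that case $\S$ is guaranteed balanced for only one orientation, and the paper must substitute an $\ell_2$-balanced set from Proposition~\ref{lemma:balanced}. Your fix of ``replacing $\tilde w_2$ by $\widehat{\tilde w_2}$'' does not address this, since the balanced edge needed is the one parallel to $u$.
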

\begin{proof}
We proceed by contradiction and assume that $\eta$ has exactly two 
nonexpansive directions and that there exists $n\in\N$ such that
$P_{\eta}(R_{n,3})\leq3n$.  Thus hypotheses~\eqref{eq:H1} and~\eqref{eq:H2} are satisfied.
In particular, by~\eqref{eq:aperiodic}, $\eta$ is aperiodic.  

We maintain the notation of the nonexpansive lines in~\eqref{eq:lines} (where we assume only two), 
the quantities in~\eqref{eq:tildes} and~\eqref{eq:moretilde},  and of the construction of the set $G_\infty$ defined in~\eqref{def:G-inf} satisfying~\eqref{eq:G-infty-one}.  
Since there are only two nonexpansive lines for $\eta$,  the edge $u$ defined in~\eqref{def:u} must either be parallel or antiparallel to $\tilde{\ell}_2$.  Let $K\supset G_{\infty}$ be defined as in~\eqref{def:K} and $\tilde{h}$ as in~\eqref{eq:h-exists}. 
Then
 $K\setminus G_{\infty}$ can be written as
$$
K\setminus G_{\infty}=\bigcup_{k=1}^{k_{0}}(l_k\cap K), 
$$
where $l_1, l_2,\dots,l_{k_{0}}$ are (undirected) lines parallel to $\tilde{\ell}_2$ and $k_0$ is the number of lines produced in the construction of $K$.  By~\eqref{eq:h-periodic}, $\tilde{h}\rst{K}$ cannot be extended to a vertically periodic $\eta$-coloring of $H_{-1}$.  Let $u_0:=u$ and label the edges of $G_{\infty}$ by $u_{i+1}:=\suc(u_i)$ for $i=0,\dots,|E(G_{\infty})|-1$, 
where $\suc(\cdot)$ denotes the successor edge taken with positive orientation.  

Suppose $u_I$ is the edge parallel to $\tilde{\ell}_1$, meaning that $u_I$ points vertically downward.  Define a sequence of sets
$$
G_{\infty}=L_0\subset L_1\subset L_2\subset\cdots\subset L_{I-1}, 
$$
where $L_{i+1}$ is obtained from $L_i$ by extending the edge of $L_i$ parallel to $u_{I-i}$ to be semi-infinite and taking the intersection of $\ZZ$ with the convex hull of the resulting shape (see Figure~\ref{fig:construction}).  Then $E(L_{i+1})=E(L_i)\setminus\{u_{I-i+1}\}$.

\begin{figure}[ht]
      \centering
   \def\svgwidth{\columnwidth}
    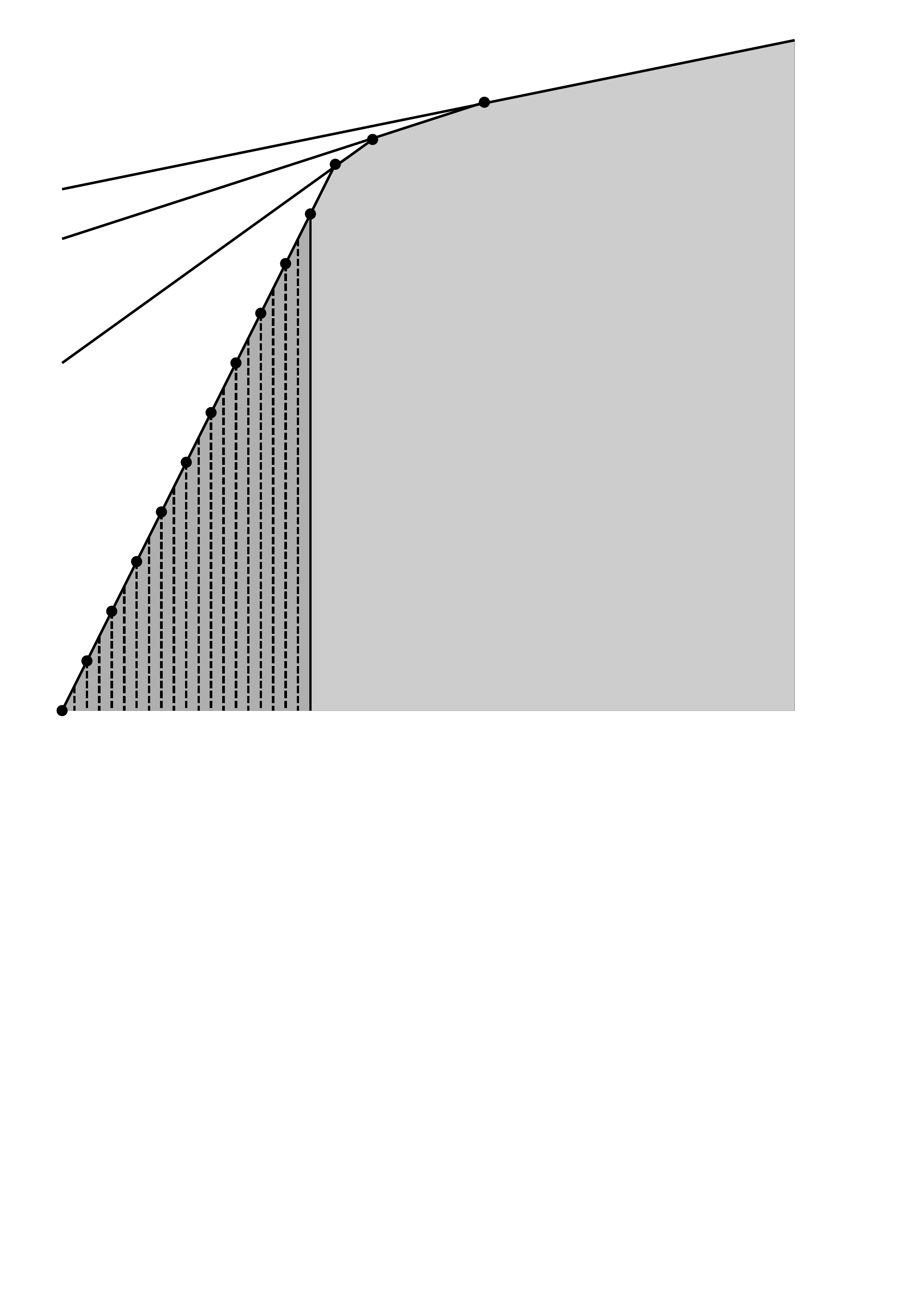
         \setlength{\abovecaptionskip}{-80mm}
      \caption{The construction of the nested sets $L_0\subset 
L_1\cdots$.  Integer
      points at the intersection of two lines are marked with a dot and 
the dotted
      lines show $L_1\setminus L_0=s_{1}\cup s_{2}\cup\cdots$.}
      \label{fig:construction}
\end{figure}

We claim that for $0\leq i<I$, $\tilde{h}\rst{L_i}$ is vertically periodic, but possibly of larger period than that of $\tilde{h}\rst{L_0}$ and that $\tilde{h}\rst{L_I}$ is eventually vertically periodic.   For $i=0$, this follows directly from the construction of $G_\infty$.  
For $i=1$, write
$$
L_1\setminus L_0=s_{1}\cup s_{2}\cup\cdots
$$
where $s_j$ is the semi-infinite line defined by $s_j:=\{(-j-1,y)\colon y\in\Z\}\cap L_1$.  
For integers $0\leq a\leq b$, write $s_{[a,b]} = s_a\cup s_{a+1}\cup\ldots\cup s_b$.
Suppose that $\tilde{h}\rst{L_0\cup s_{[1,j]}}$ is vertically periodic.  
Let $\vec v_j(i)\in\ZZ$ be the translation of $\tilde{\S}$ such that the top-most element of $\tilde{w}_1+\vec v_j(i)$ is the point $(j,i)$.  
If for all $R<0$ 
there exists $r\leq R$ such that $(T^{-\vec v_j(r)}\tilde{h})\rst{\tilde{\S}\setminus\tilde{w}_1}$ extends uniquely to an $\eta$-coloring of $\tilde{\S}$, then there is a unique extension of $\tilde{h}\rst{L_0\cup s_{[1, j]}}$ to an $\tilde{\eta}$-coloring of $L_0\cup s_{[1, j+1]}$ by~\eqref{eq:Stilde-generating}.  
In this case, arguing as in Step $1$ in Proposition~\ref{prop:period}, 
the restriction of $\tilde{h}$ to $L_0\cup s_{[1, j+1]}$ is vertically periodic of the same period as $\tilde{h}\rst{L_0\cup s_{[1,j]}}$.  
Otherwise there exists $R<0$ such that for all $r\leq R$ the coloring $(T^{-\vec v_j(r)}\tilde{h})\rst{\tilde{\S}\setminus\tilde{w}_1}$ does not extend uniquely to an $\eta$-coloring of $\tilde{\S}$.  
Then by Corollary~\ref{cor:semiinfinite} the restriction of $\tilde{h}$ to $L_{1}\cap s_{[j, j+m]}$ is eventually periodic of period at most $|\tilde{w}_1\cap\tilde{\S}|-1$ and the initial portion which may not be periodic has length at most $|\tilde{w}_1\cap\tilde{\S}|-1$, where $m$ is the number of vertical lines in $L_1$ which have nonempty intersection with $\tilde{\S}\setminus\tilde{w}_1$.  
So by Corollary~\ref{cor:semiinfiniteperiodbounds}, we have that $\tilde{h}\rst{L_{1}\cap s_{j+1}}$ is eventually vertically periodic of period at most $2|\tilde{w}_1\cap\tilde{\S}|-2$ and the initial portion which may not be periodic again has length at most $|\tilde{w}_1\cap\tilde{\S}|-1$.  
It follows by induction that $\tilde{h}\rst{L_1}$ is eventually vertically periodic and $(T^{(0,|\tilde{w}\cap\tilde{\S}|+1)}\tilde{h})\rst{L_1}$ is vertically periodic.  If $L_1=L_I$ then the claim follows.  Otherwise the semi-infinite edge of $L_1$ parallel to $u_{I-1}$ determines an expansive direction for $\tilde{\eta}$.  Write 
$$
L_2\setminus\bigl(L_0\cup(L_1-(0,|\tilde{w}\cap\tilde{\S}|-1))\bigr)=\bigcup_{i=1}^{k_1}\tilde{s}_i
$$
where the $\tilde{s}_i$ are semi-infinite lines parallel to $u_{I-1}$.  Since $u_{I-1}$ is expansive, there is a unique extension of $\tilde{h}\rst{L_1-(0,|\tilde{w}\cap\tilde{\S}|-1)}$ to an $\eta$-coloring of $(L_1-(0,|\tilde{w}\cap\tilde{\S}|-1))\cup\tilde{s}_1$.  
Since $L_1-(0,|\tilde{w}\cap\tilde{\S}|-1)$ is colored in the same way as $L_1-(0,|\tilde{w}\cap\tilde{\S}|-1-q)$, where $q$ is the vertical period, and there is a unique way to extend this coloring to an $\tilde{\eta}$-coloring of $(L_1-(0,|\tilde{w}\cap\tilde{\S}|-1))\cup\tilde{s}_1$, we have that
the vertical periodicity of $\tilde{h}\rst{L_1-(0,|\tilde{w}\cap\tilde{\S}|-1)}$ implies that $\tilde{h}\rst{\bigl(L_1-(0,|\tilde{w}\cap\tilde{\S}|-1)\bigr)\cup\tilde{s}_1}$ is also vertically periodic. 
Inductively it follows that $\tilde{h}\rst{L_2}$ is vertically periodic.
%% (in fact it follows that the vertical period is at most $\left(2|\tilde{w}_1\cap\tilde{\S}
%% |-2\right)!$, but we won't need this fact).  
More generally, suppose that $\tilde{h}\rst{L_i}$ is vertically periodic for $i<I$.  Then $L_i$ has two semi-infinite edges, one of which it shares with $L_0$ and the other determines an expansive direction for $\tilde{\eta}$.  Write
$$
L_{i+1}\setminus L_i=t_1\cup t_2\cup\cdots
$$
where $L_i\cup t_{[1,j]}$ is convex for all $j=1,2,\dots$,  each $t_j$ is the intersection of $\ZZ$ with a semi-infinite line parallel to $u_{I-i}$ and contained in $L_i$, and 
$t_{[a,b]} = t_a\cup t_{a+1}\cup\ldots \cup t_b$.  
Suppose that $\tilde{h}\rst{L_i\cup s_{[1,j]}}$ is vertically periodic.  Since $u_{I-i}$ determines an $\tilde{\eta}$-expansive direction, 
there is a unique extension of $L_i\cup t_{[1,j]}$ to an $\tilde{\eta}$-coloring of $L_i\cup t_{[1,j+1]}$.  
By vertical periodicity, $\tilde{h}\rst{L_i\cup t_{[1,j]}}=(T^{(0,-q)}\tilde{h})\rst{L_i\cup t_{[1,j]}}$, 
where $q$ denotes the smallest vertical period of $\tilde{h}\rst{L_i\cup t_{[1,j]}}$.  
By uniqueness, $\tilde{h}\rst{L_i\cup t_{[1,j+1]}}=(T^{(0,-q)}\tilde{h})\rst{L_i\cup t_{[1,j+1]}}$ and hence is also vertically periodic.  By induction, this holds for all $j$ and hence $\tilde{h}\rst{L_{i+1}}$ is vertically periodic.  The claim follows.

Let $C$ denote the smallest bi-infinite strip whose edges are parallel to $\tilde{\ell}_2$ that contains 
$\tilde{\S}\setminus\tilde{w}_2$.  Let $J\in\Z$ be the maximal integer such that $C+(0,J)$ is a subset of the region in $\ZZ$ on which $\tilde{h}$ is vertically periodic, let $C_j:=C+(0,j)$, and let $Q\in\N$ be the smallest vertical period of $\tilde{h}\rst{L_I-(0,J)}$.  The integer $J$ is well-defined by~\eqref{eq:h-periodic}.  
Then for all $j\leq J$, we have that $\tilde{h}\rst{C_j}=(T^{(0,-Q)}\tilde{h})\rst{C_j}$.  

We claim that for all $j\leq J$, $\tilde{h}\rst{C_j}$ is not periodic with period vector parallel to $\tilde{\ell}_2$.  
By the preceding remark, it suffices to show that this holds for all sufficiently negative values of $j$.
For all $j\in\Z$ sufficiently negative that the only edge of $L_0$ that $C_j$ intersects is the edge parallel to $\tilde{\ell}_1$ (all but finitely many $C_j$ have this property), recall that $\tilde{h}\rst{L_0}=\tilde{f}\rst{L_0}$.  
By the construction of $\tilde{f}$, we have that $\tilde{f}\rst{H_{-1}}$ cannot be extended to an $\tilde{\eta}$-coloring of $\ZZ$ which is periodic with period vector parallel to $\tilde{\ell}_2$.  
If $\tilde{h}\rst{C_j}$ is $\tilde{\ell}_2$-periodic, then by Corollary~\ref{cor:periodicstripimpliesperiodic} it follows that $\tilde{h}$ itself is $\tilde{\ell}_2$-periodic.  
But the sequence $(T^{(0,-k)}\tilde{h})$ has an accumulation point, and any such accumulation 
point is also $\tilde{\ell}_2$-periodic.  
Moreover, the restriction of any such accumulation point to $H_{-1}$ is one of the functions $\tilde{f}\rst{H_{-1}}, (T^{(0,-1)}\tilde{f})\rst{H_{-1}},\dots,(T^{(0,-p+1)}\tilde{f})\rst{H_{-1}}$ (where again $p\in\N$ is the minimal vertical period of $\tilde{f}$).  This contradicts the fact that $\tilde{f}\rst{H_{-1}}$ does not extend to a $\tilde{\ell}_2$-periodic coloring of $\ZZ$, and the claim follows.

If $\ell_2$ is not horizontal, then $\tilde{\S}$ is $u$-balanced, where $u$ is the edge defined in~\eqref{def:u}.    In this case every line parallel to $u$ that has nonempty intersection with $\tilde{\S}$ contains at least $|\tilde{w}_2\cap\tilde{\S}|-1$ integer points.  Since $\tilde{h}\rst{C_j}$ is not $\tilde{\ell}$-periodic, the Morse-Hedlund Theorem implies that there are at least $|\tilde{w}_2\cap\tilde{\S}|$ distinct $\tilde{\eta}$-colorings of $\tilde{\S}\setminus\tilde{w}_2$ that occur in $C_j$ (otherwise the coloring would be periodic).  But there are at most $|\tilde{w}\cap\tilde{\S}|-1$ $\eta$-colorings of $\tilde{\S}\setminus w_2$ that extend non-uniquely to an $\eta$-coloring of $\tilde{\S}$, and so by Corollary~\ref{eq:tildef-periodic}, the coloring of $C_j$ extends uniquely to an $\tilde{\eta}$-coloring of $C_j\cup C_{j+1}$ for all $j\leq J$.  
Since the restriction of $\tilde{h}$ to the region $\bigcup_{j\leq J}C_j$ is vertically periodic and $\tilde{h}\rst{C_J}$ extends uniquely to an $\tilde{\eta}$-coloring of $C_J\cup C_{J+1}$, the restriction of $\tilde{h}$ to the region $\bigcup_{j\leq J+1}C_j$ is vertically periodic.  But this contradicts the definition of $J$.  If $\ell_2$ is horizontal, then the same argument applies to $\S^{\ell_2}$ in place of $\S$, where $\S^{\ell_2}$ is an $\ell_2$-balanced subset of $R_{n,3}$ constructed by Proposition~\ref{lemma:balanced}.
\end{proof}

Following standard terminology in the literature (e.g.~\cite{EKM}) we make the following definition:
\begin{definition}
\label{def:periodic-in-region}
Suppose $\T\subset\ZZ$ and $\vec u\in\ZZ$.  We say that $\alpha\colon 
\T\to\A$ is {\em periodic when restricted to  the region $T$ with period 
vector $\vec u$} if $\alpha(\vec x)=\alpha(\vec x+\vec u)$ for all $\vec 
x\in\T$ such that $\vec x+\vec u\in\T$.
\end{definition}

\begin{proposition}\label{exactlythree}
Suppose there are exactly three nonexpansive lines for $\eta$.  Then for all $n\in\N$, $P_{\eta}(R_{n,3})>3n$.
\end{proposition}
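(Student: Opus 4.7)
The plan is to argue by contradiction following the template of Proposition~\ref{exactlytwo}. Assume there are exactly three nonexpansive lines $\ell_1, \ell_2, \ell_3$ and that $P_\eta(R_{n,3}) \leq 3n$ for some $n\in\N$. Then hypotheses~\eqref{eq:H1} and~\eqref{eq:H2} hold, so $\eta$ is aperiodic by~\eqref{eq:aperiodic}. I would invoke the construction preceding Proposition~\ref{exactlytwo} verbatim to obtain the vertically periodic function $\tilde f \in X_{\tilde\eta}$, the large convex set $G_\infty$ on which $\tilde f$ is vertically periodic, its nonhorizontal semi-infinite edge $u \in E(G_\infty)$, the enlarged set $K \supset G_\infty$, and a function $\tilde h \in X_{\tilde\eta}$ agreeing with $\tilde f$ on $G_\infty$ but differing on $K$. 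Since $u$ is parallel to an edge of $\tilde\S$, it is parallel or antiparallel to one of $\tilde\ell_1, \tilde\ell_2, \tilde\ell_3$.

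The skeleton of the proof mirrors Proposition~\ref{exactlytwo}. I would construct the nested family $L_0 = G_\infty \subset L_1 \subset \cdots \subset L_{I-1}$ by successively extending boundary edges of $G_\infty$ to semi-infinite edges in the positive orientation, and inductively show that each $\tilde h\rst{L_i}$ is vertically periodic. The induction step uses expansiveness of the newly extended edges (every direction other than $\tilde\ell_1, \tilde\ell_2, \tilde\ell_3$ is expansive) together with Corollaries~\ref{cor:semiinfinite} and~\ref{cor:semiinfiniteperiodbounds} to propagate vertical periodicity across the semi-infinite strips being appended. The periodicity on $L_I$ then forces vertical periodicity of $\tilde h$ on every strip $C_j = C + (0,j)$ for $j$ below some maximal integer $J$, where $C$ is the thinnest bi-infinite strip parallel to $u$ containing $\tilde\S \setminus \tilde w_u$ and $\tilde w_u \in E(\tilde\S)$ is the edge parallel to $u$.

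The final contradiction comes from showing that $\tilde h\rst{C_J}$ cannot itself be periodic in the direction of $u$: the Morse-Hedlund argument then produces more colorings of the restricted shape than there are non-unique extensions, so $\tilde h\rst{C_J}$ must extend uniquely to $C_J \cup C_{J+1}$, propagating vertical periodicity and contradicting the maximality of $J$. In Proposition~\ref{exactlytwo} this $u$-nonperiodicity relied on the fact that $\tilde f\rst{H_{-1}}$ admits no $\tilde\ell_2$-periodic extension to $\ZZ$. For three lines, the edge $u$ may be parallel to $\tilde\ell_2$ or $\tilde\ell_3$, so I would establish the symmetric non-extension statement: the same argument used in the preamble to single out $\ell_2$ applies to $\ell_3$ (at most one of the pair $f, g$ extends to an $\ell_3$-periodic coloring of $\ZZ$), and therefore by possibly swapping $f \leftrightarrow g$ and relabeling $\ell_2 \leftrightarrow \ell_3$, we can arrange that $\tilde f\rst{H_{-1}}$ has no periodic extension in whichever of $\tilde\ell_2, \tilde\ell_3$ the edge $u$ is parallel to.

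The main obstacle I anticipate is the degenerate configuration in which $f$ extends periodically only to $\ell_3$ while $g$ extends periodically only to $\ell_2$, so that neither initial choice avoids both extensions simultaneously. Here the choice of $\tilde f$ must be coordinated with the direction of $u$ arising in the $G_\infty$ construction; the plan is to run the construction with both candidate choices and select the one whose resulting $u$ is parallel to the avoided direction, or alternatively to exploit the fact that the direction of $u$ is constrained by the vertical periodicity of $\tilde f$ on $G_\infty$ to make the matching choice in advance. Once the non-extension property is in place, the concluding Morse-Hedlund counting on $\tilde\S \setminus \tilde w_u$ proceeds exactly as in the final paragraph of Proposition~\ref{exactlytwo}, with $\tilde w_2$ and $\tilde\ell_2$ replaced by $\tilde w_u$ and the line of $u$, and with the balanced substitute $\S^{\ell_2}$ replaced by the appropriate $\ell$-balanced set from Proposition~\ref{lemma:balanced} if $u$ happens to be horizontal.
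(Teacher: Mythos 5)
Your plan is to rerun the proof of Proposition~\ref{exactlytwo} with three lines, and the one place where that proof genuinely uses ``exactly two'' is precisely the place you flag but do not close. In the preamble construction the function $f$ is fixed \emph{before} $G_\infty$ is built, chosen so that $f\rst{H^{\prime}_{-1}}$ admits no periodic extension with period vector parallel to $\ell_2$; in the two-line case the unbounded edge $u$ of $G_\infty$ is then automatically parallel or antiparallel to $\tilde{\ell}_2$, so this non-extension hypothesis matches the direction that appears in the final step (showing $\tilde{h}\rst{C_j}$ is not $u$-periodic, via accumulation points of $T^{(0,-k)}\tilde{h}$ whose restrictions to $H_{-1}$ are vertical translates of $\tilde{f}\rst{H_{-1}}$). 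With three lines, the exclusion argument ``at most one of $f,g$ extends periodically'' works for each fixed transverse direction separately, so it can happen that $f$ extends $\ell_3$-periodically but not $\ell_2$-periodically while $g$ extends $\ell_2$-periodically but not $\ell_3$-periodically. Since $G_\infty$, and hence $u$, is produced from the chosen function (through subsequence choices, $J_{\max}$, etc.), nothing prevents the construction run on $f$ from yielding $u\parallel\tilde{\ell}_3$ and the construction run on $g$ from yielding $u\parallel\tilde{\ell}_2$; in that configuration neither of your two proposed remedies applies, and you give no argument that it cannot occur or can be repaired. Without the non-extension property in the direction of $u$, the claim that $\tilde{h}\rst{C_j}$ is not $u$-periodic fails, and with it the contradiction with the maximality of $J$. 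So the proposal has a genuine gap at its central step, not a technicality.

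For comparison, the paper does not reuse the $G_\infty$/$L_i$ machinery at all for three lines; it turns the extra nonexpansive line into a geometric resource rather than an obstacle. With three nonexpansive lines the generating set $\S\subseteq R_{n,3}$ is forced to be a hexagon whose four non-horizontal edges contain exactly two lattice points and whose two horizontal edges have equal length, so every row of $\S$ contains at least $|w_2\cap\S|$ lattice points and $\S$ is balanced in every nonexpansive direction. Working with horizontal half-planes $V_j$, the functions $f,g$ agreeing on $V_0$, and the sets $\S_R,\S_L$ obtained by deleting the rightmost (resp.\ leftmost) point of each row, the paper shows (Claim~\ref{claim:done}) that $f$ is doubly periodic on some half-plane $V_{y_0}$ with period vector $(-a_1,-1)$ or $(-a_6,-1)$ but not on $V_{y_0+1}$, and then exhibits three distinct colorings of $\S_R$ that extend non-uniquely to $\eta$-colorings of $\S$; since $|\S_R|=|\S|-3$ and $D_\eta(\S_R)>D_\eta(\S)$ give $P_\eta(\S)\leq P_\eta(\S_R)+2$, at most two such colorings can exist, a contradiction. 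If you want to salvage your route you would need a new argument coordinating the choice of $f$ with the direction of the edge $u$; otherwise the hexagon-counting argument is the mechanism by which the three-line case is actually eliminated.
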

\begin{proof}
We proceed by contradiction and assume that $\eta$ has exactly three 
nonexpansive directions and that there exists $n\in\N$ such that
$P_{\eta}(R_{n,3})\leq3n$.  Thus hypotheses~\eqref{eq:H1} and~\eqref{eq:H2} are satisfied.  In particular, $\eta$ is aperiodic~\eqref{eq:aperiodic}.

By Proposition~\ref{parallelprop}, there exists an $\eta$-generating set $\S\subseteq R_{n,3}$ which satisfies~\eqref{eq:subset} and every nonexpansive direction for $\eta$ is parallel to one of the edges of $\S$.  By Proposition~\ref{cor:antiparallel}, the direction antiparallel to any nonexpansive direction is also nonexpansive.  Since there are exactly three nonexpansive lines for $\eta$, $\S$ has precisely six edges, all of which determine nonexpansive directions.  
Since $\S\subseteq R_{n,3}$, two of these edges must be horizontal and the remaining four edges each contain exactly two integer points.  
Again by Proposition~\ref{cor:antiparallel}, every edge of $\S$ is antiparallel to another edge of $\S$, and so $\partial\S$ is a hexagon comprised of three pairs of parallel edges.  It follows that the two horizontal edges contain the same number of integer points and this number is at most $n-1$.  
Let $w_1\in E(\S)$ be the predecessor edge to the top horizontal edge in $E(\S)$ and recursively define $w_{i+1}:=\suc(w_i)$ for $i=1,2,3,4,5$ (see Figure~\ref{fig:three-nonexp}).  
Then $w_{i+3}$ is antiparallel to $w_i$ for all $i$, where the indices are understood to be taken $\pmod 6$.

\begin{figure}[ht]
      \centering
   \def\svgwidth{.8\columnwidth}
         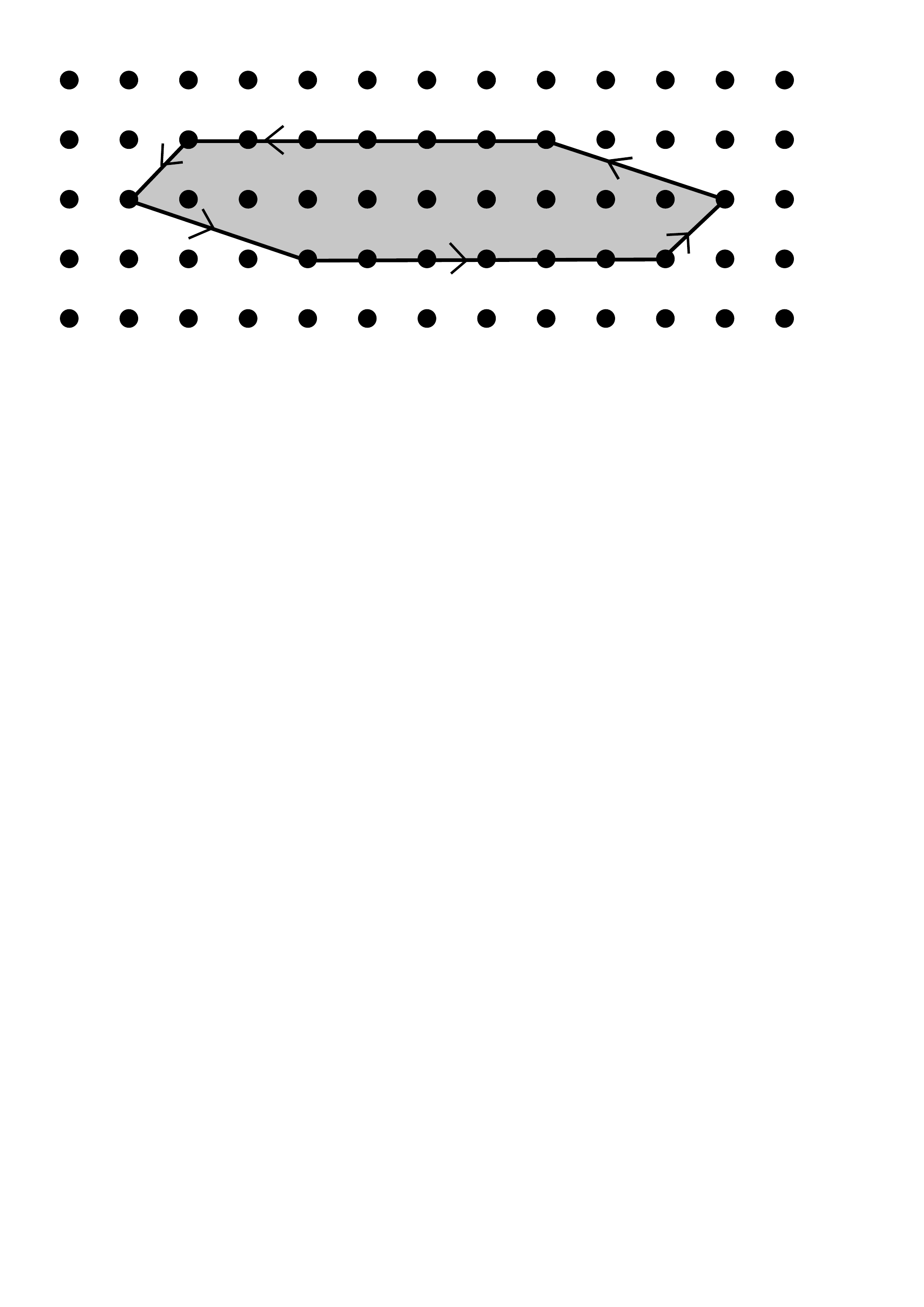
         \setlength{\abovecaptionskip}{-100mm}
      \caption{The set $\S$ with oriented edges labeled.}
      \label{fig:three-nonexp}
\end{figure}

We summarize: $|w_2\cap\ZZ|=|w_5\cap\ZZ|\leq n-1$ and $|w_i\cap\ZZ|=|w_{i+3}\cap\ZZ|=2$ for $i=1,3$.  It follows that 
\begin{equation}\label{eq:fact}
\text{$\S$ is balanced in every nonexpansive direction.}
\end{equation}

For convenience, define $a_1, a_3, a_4, a_6\in\Z$ such that
\begin{equation*}
%%\label{eq:directions}
\text{$w_i$ is parallel to $(a_i,1)$ for $i=1,6$ and $w_i$ is parallel to $(a_i,-1)$ for $i=3,4$.}
\end{equation*}
By convexity, one of the statements: 
\begin{align*}
& a_1, a_3\leq0; \\
& a_1\leq0, a_3\geq0, |a_1|>a_3; \\
& a_1\geq0, a_3\leq0, |a_3|>a_1; 
\end{align*}
holds.  
In each case, every horizontal line that has nonempty intersection with $\S$ contains at least 
\begin{equation}\label{eq:size-of-S*}
|w_2\cap\S|\text{ integer points}
\end{equation}
(e.g. in the first case the middle horizontal line in $\S$ contains $|w_2\cap\S|+|a_1|+|a_3|$ integer points, and the other cases are similar).\footnote{This bound is stronger than our usual bound that every horizontal line that has nonempty intersection with $\S$ intersects in at least $|w_2\cap\S|-1$ integer points.}

For $j\in\Z$, let $V_j$ be the horizontal half-plane defined by 
$$
V_j:=\left\{(x,y)\colon x\in\Z, \ y\leq j\right\}.
$$
Since the $w_2$ direction is nonexpansive for $\eta$, by Proposition~\ref{prop:period} there exist $f,g\in X_{\eta}$ such that $f\rst{V_0}=g\rst{V_0}$ but $f\rst{V_1}\neq g\rst{V_1}$.  At most one of $f$ and $g$ is periodic with period vector parallel to $w_1$, and so we can 
suppose without loss that $f$ is not.  Furthermore, without loss we can assume that 
\begin{equation}\label{eq:f-not-periodic}
f\rst{V_1} \text{ does not extend to a periodic } \eta\text{-coloring of } \ZZ 
\text{ with period parallel to } w_1.
\end{equation}

Since $\S$ is $w_2$-balanced by~\eqref{eq:fact}, it follows from Proposition~\ref{prop:period} that
$f$ is horizontally periodic and the restriction of $f$ to any horizontal strip of height two has period at most $2|w_2\cap\S|-2$.  Set
$$
B:=\left\{(x,y)\in\ZZ\colon y\in\{-1,0\}\right\}\text{ and }C:=\left\{(x,y)\colon y\in\{-1,0,1\}\right\}.
$$
For any $j\in\Z$ such that $(T^{-(0,j)}f)\rst{B}$ does not extend uniquely to an $\eta$-coloring of $C$, we have that $(T^{-(0,j)}f)\rst{B}$ is horizontally periodic of period at most $|w_1\cap\S|-1$.  In particular, this holds for $j=0$.  

We claim that there are infinitely many integers $j\leq0$ 
such that 
\begin{equation}
\label{eq:inf-non-unique}
(T^{-(0,j)}f)\rst{B} \text{ does not extend uniquely to an } \eta\text{-coloring of } C.
\end{equation}
The proof of the claim is similar to that of~\eqref{eq:non-unique}.  
We proceed by contradiction.  Suppose that there exists an integer $J\leq0$ such that for all $j<J$, the coloring $(T^{-(0,j)}f)\rst{B}$ extends uniquely to an $\eta$-coloring of $C$ and assume that $|J|$ is minimal. 
Since $f\rst{V_0}$ is horizontally periodic, there are only finitely many $\eta$-colorings of the form $(T^{-(0,j)}f)\rst{B}$ for $j\leq0$.   Say there are $M$ such colorings.  
 Then by the pigeonhole principle, there exist $1\leq j_1<j_2\leq M+2$ such that $(T^{-(0,J-j_1)}f)\rst{B}=(T^{-(0,J-j_2)}f)\rst{B}$.  Choose $j_1$ to be the smallest integer such that there exists $j_2$ with this property.  Then by construction, $(T^{-(0,J-j_1)}f)\rst{C}=(T^{-(0,J-j_2)}f)\rst{C}$ and hence $(T^{-(0,J-j_1+1)}f)\rst{B}=(T^{-(0,J-j_2+1)}f)\rst{B}$.  If $j_1>1$, this contradicts the minimality of $j_1$.  If $j_1=1$, then the fact that $(T^{-(0,J-j_2+1)}f)\rst{B}=(T^{-(0,J)}f)\rst{B}$ extends uniquely to an $\eta$-coloring of $C$ contradicts the minimality of $|J|$.  The claim follows.

Let
\begin{align}
\S_R&:=\S\text{ with the rightmost element of every row removed;}\label{eq:S_R} \\
\S_L&:=\S\text{ with the leftmost element of every row removed.}\label{eq:S_L}
\end{align}

We claim that there do not exist integers $y_1, y_2\in\Z$ such that both of the following hold simultaneously:
\begin{align}
\label{eq:def-S_R} 
\text{for all $x\in\Z$, $(T^{(x,y_1)}f)\rst{\S_R}$ extends uniquely to an $\eta$-coloring of $\S$;} \\
\label{eq:def-S_L} 
\text{for all $x\in\Z$, $(T^{(x,y_2)}f)\rst{\S_L}$ extends uniquely to an $\eta$-coloring of $\S$.}
\end{align}
 We prove the claim by contradiction.  Suppose instead that such integers $y_1, y_2\in\Z$ exist and assume $y_1\leq y_2$ (the other case being similar).  Define $F:=[0,|\S|]\times[y_1,y_2+2]$ and observe that since $f\in X_{\eta}$,  there exists $\vec u\in\ZZ$ such that $f\rst{F}=(T^{\vec u}\eta)\rst{F}$.  By~\eqref{eq:def-S_R} and~\eqref{eq:def-S_L}, $T^{\vec u}\eta$ coincides with $f$ on the set
\begin{equation}
\label{eq:big-set}
F\cup\left([0,\infty)\times[y_1,y_1+2]\right)\cup\left((-\infty,0]\times[y_2,y_2+2]\right),
\end{equation}
and so $T^{\vec u}\eta$ is horizontally periodic on this set.  
Let $v\in V(\S)$ be the vertex of intersection of the edges $w_1$ and $w_2$.  There is a translation of $\S$ that takes $v$ to the point $(|\S|+1,y_1+3)$ and takes $\S\setminus v$ to a subset of $F\cup([0,\infty)\times[y_1,y_1+2])$.  Since $\S$ is $\eta$-generating and $T^{\vec u}\eta$ coincides with $f$ on $F\cup([0,\infty)\times[y_1,y_1+2])$, we have that
$$(T^{\vec u}(\eta))(|\S|+1,y_1+3)=f(|\S|+1,y_1+3).
$$  
It follows by induction that $(T^{\vec u}(\eta))(|\S|+k,y_1+3)=f(|\S|+k,y_1+3)$ for all $k\geq1$.  A similar induction argument shows that 
$$(T^{\vec u}(\eta))(|\S|+k,y_1+2+j)=f(|\S|+k,y_1+2+j)$$ 
for all $k\geq1$
and all $1\leq k\leq y_2-y_1$.  Therefore $T^{\vec u}\eta$ and $f$ coincide on the set larger than 
in~\eqref{eq:big-set}, defined by:
\begin{equation*}
F\cup\left([0,\infty)\times[y_1,y_2+2]\right)\cup\left((-\infty,0]\times[y_2,y_2+2]\right).
\end{equation*}
A similar argument, using the vertex $v^{\prime}\in V(\S)$ that is the intersection of 
the edges $w_4$ and $w_5$ in place of $v$, shows that $T^{\vec u}\eta$ and $f$ coincide on the set
$$
(-\infty,\infty)\times[y_1,y_2+2],
$$
and so $T^{\vec u}\eta$ is horizontally periodic on this set.  Since $\S$ is horizontally balanced by~\eqref{eq:fact} it follows from Corollary~\ref{cor:periodicstripimpliesperiodic} that $T^{\vec u}\eta$ is horizontally periodic and hence $\eta$ is periodic.  This is a contradiction of~\eqref{eq:aperiodic} and the claim follows.  

Thus henceforth we assume that for all $y\in\Z$, there exists $x_y\in\Z$ such that
\begin{equation}\label{eq:extension}
(T^{(x_y,y)}f)\rst{\S_R} \text{ does not extend uniquely to an } \eta\text{-coloring of } \S.
\end{equation}
(The remainder of the proof is analogous if instead, for all $y\in\Z$, there exists $x_y\in\Z$ such that $(T^{(x_y,y)}f)\rst{\S_L}$ does not extend uniquely to an $\eta$-coloring of $\S$.)

\begin{claim}
\label{claim:done}
There exists a nonpositive integer $y$ such that $f\rst{V_y}$ is doubly periodic, $f\rst{V_{y+1}}$ is not doubly periodic, and either $(-a_1,-1)$ or $(-a_6,-1)$ is a period vector for $f\rst{V_y}$.  
\end{claim}
As $V_y$ is a half plane, doubly periodic is  interpreted in the sense of Definition~\ref{def:periodic-in-region}.  Recall that $B=\left\{(x,y)\in\ZZ\colon y\in\{-1,0\}\right\}$.  Let $B^{\prime}$ be the thinnest strip with edges parallel and antiparallel to $w_1$ which contains $\S\setminus w_1$.  For $x\in\Z$, let 
\begin{equation*}
%\label{eq:def-of-B_x}
B_x^{\prime}:=B^{\prime}+(x,0).
\end{equation*}
If there exists $x_0\in\Z$ such that $f\rst{B^{\prime}_{x_0}\cap V_0}$ does not extend uniquely to an $\eta$-coloring of $(B^{\prime}_{x_0}\cup B^{\prime}_{x_0+1})\cap V_0$, then for any $\vec u\in\ZZ$ such that $(\S\setminus w_1+\vec u)\subset B^{\prime}_{x_0}\cap V_0$, 
since $\S$ is $\eta$-generating we have that 
$(T^{-\vec u}f)\rst{\S\setminus w_1}$ extends non-uniquely to an $\eta$-coloring of $\S$.  
Since $\S$ satisfies~\eqref{eq:subset}, by Corollary~\ref{generatingequation} we have that $D_{\eta}(\S\setminus w_1)>D_{\eta}(\S)$.  Since $|w_1\cap\S|=2$, there is precisely one coloring of $\S\setminus w_1$ that extends non-uniquely to an $\eta$-coloring of $\S$.  
In particular, since
$$
B^{\prime}_{x_0}\cap V_0=\bigcup_{k=2}^{\infty}\left((\S\setminus w_1)+(x_0-ka_1,-k)\right)
$$
it follows that $B^{\prime}_{x_0}\cap V_0$ is periodic with period vector $(-a_1,-1)$.  
Since $f\rst{V_1}\neq g\rst{V_1}$, we have that $f\rst{B}$ is horizontally periodic of period at most $|w_2\cap\S|-1$.  
The region $(B^{\prime}_{x_0}\cap V_0)\cap B$ is convex and both $\{(x,-1)\colon x\in\Z\}$ and $\{(x,0)\colon x\in\Z\}$ intersect it in at least $|w_2\cap\S|-1$ integer points by~\eqref{eq:size-of-S*}, 
as the strip $B^\prime_{x_0}$ is only wide enough to contain 
$\S\setminus w_1$.  Therefore $f(x,0)=f(x-a_1,-1)$ for all $x\in\Z$.  

Recall that $\S\subseteq R_{n,3}$, and we assume that the bottom most row of $R_{n,3}$ 
lies on the $x$-axis.  
We have that $\S+(x_0-2a_1,-2)$ is contained in the set $(B^{\prime}_{x_0}\cap V_0)\cup B$.  If $v\in V(\S)$ is the vertex incident to $w_6$ and $w_1$, observe that $(\S\setminus v)+(x_0-3a_1,-3)$ is also contained in $(B^{\prime}_{x_0}\cap V_0)\cup B$ and moreover that
$$(T^{-(x_0-2a_1,-2)}f)\rst{\S\setminus v}=(T^{-(x_0-3a_1,-3)}f)\rst{\S\setminus v}.$$  
Since $v$ is $\eta$-generated by $\S$, 
$$(T^{-(x_0-2a_1,-2)}f)\rst{\S}=(T^{-(x_0-3a_1,-3)}f)\rst{\S}.
$$  
It follows by induction that the coloring $f\rst{(B^{\prime}_{x_0}\cup B^{\prime}_{x_0+1})\cap V_0}$ is periodic with period vector $(-a_1,-1)$.  Inductively it follows that the restriction of $f$ to $V_0\cap\bigcup_{k=0}^{\infty}B^{\prime}_{x_0+k}$ is periodic with period vector $(-a_1,-1)$ as well.  
A final induction, where the vertex $v$ is replaced by the vertex $v^{\prime}$ incident to $w_4$ and $w_5$,  shows that $f\rst{V_0}$ is doubly periodic and that $(-a_1,-1)$ is a period vector.  A similar argument applies if there exists $x_0\in\Z$ such that $f\rst{B_{x_0}\cap V_0}$ does not extend uniquely to an $\eta$-coloring of $(B^{\prime}_{x_0-1}\cup B^{\prime}_{x_0})\cap V_0$.  Thus 
we are finished unless for every $x\in\Z$ the coloring $f\rst{B^{\prime}_x\cap V_0}$ extends uniquely to an $\eta$-coloring of $(B^{\prime}_{x-1}\cup B^{\prime}_x\cup B^{\prime}_{x+1})\cap V_0$.  If
$$
D(r):=\bigcup_{k=2}^r(\S\setminus w_1)+(-ka_1,-k), 
$$
it follows that for all $x\in\Z$ there exists $R_x\in\N$ such that for all $r>R_x$, the coloring $(T^{(x,0)}f)\rst{D(r)}$ extends uniquely to an $\eta$-coloring of
$$
\bar{D}(r):=\bigcup_{k=2}^r\S+(-ka_1,-k)\cup \bigcup_{k=2}^r\S+(-ka_1-1,-k)
$$
(we assume $R_x$ is minimal with this property).  Since $f\rst{V_1}$ is horizontally periodic, the set $\{R_x\colon x\in\Z\}$ is finite so $R:=\max_{x\in\Z}R_x$ is well-defined.  It also follows that $f\rst{V_0\setminus V_{-R}}$ is doubly periodic where $(-a_1,-1)$ is one period vector and the horizontal period is at most $|w\cap\S|-1$.   For $s\in\N$, set
$$
E(s):=\bigcup_{k=0}^s(\S\setminus w_6)+(-Ra_1-ka_3,-R-k).
$$
As above, if there exists $x\in\Z$ such that $(T^{(x,0)}f)\rst{E(s)}$ extends non-uniquely to an $\eta$-coloring of 
$$
\bar{E}(s):=\bigcup_{k=0}^s\S+(-Ra_1-ka_3,-R-k)\cup\bigcup_{k=0}^s\S+(-Ra_1-ka_3-1,-R-k),
$$
then $f\rst{V_{R-1}}$ is doubly periodic and $(-a_6,-1)$ is a period for it.  Otherwise, for all $x\in\Z$, there exists $R^{\prime}_x\in\N$ such that for all $r>R^{\prime}_x$, the coloring $(T^{(x,0)}f)\rst{E(s)}$ extends uniquely to an $\eta$-coloring of $\bar{E}(s)$ and again $R^{\prime}:=\max_{x\in\Z}R^{\prime}_x$ is well-defined.  The claim has been shown unless this last case occurs.  In that case, for all $x\in\Z$ the coloring $(T^{(x,0)}f)\rst{D(R)\cup E(R^{\prime})}$ extends uniquely to an $\eta$-coloring of $\bar{D}(R)\cup\bar{E}(R^{\prime})$.  It follows that for all $x\in\Z$, the coloring $(T^{(x,0)}f)\rst{D(R)\cup E(R^{\prime})}$ extends uniquely to an $\eta$-coloring of $V_0\setminus V_{R+R^{\prime}}$.  Since $f\in X_{\eta}$, there exists $\vec u\in\ZZ$ such that $(T^{\vec u}\eta)\rst{D(R)\cup E(R^{\prime})}$ and therefore $f\rst{V_0\setminus V_{R+R^{\prime}}}=(T^{\vec u}\eta)\rst{R+R^{\prime}}$ is horizontally periodic.  By Corollary~\ref{cor:periodicstripimpliesperiodic}, $\eta$ itself is horizontally periodic; a contradiction of~\eqref{eq:aperiodic}.  Therefore either $f\rst{V_0}$ is doubly periodic with period vector $(-a_1,-1)$ or there exists $R\in\N$ such that $f\rst{V_R}$ is doubly periodic with period vector $(-a_6,-1)$.  Claim~\ref{claim:done} follows.

Thus we can define $y_0\leq 0$ to be the integer of least absolute value for which Claim~\ref{claim:done} holds.  Recalling that $f\rst{V_1}$ is not doubly periodic, we have shown:  
\begin{equation}\label{eq:period-of-f-on-region}
\text{ $f\rst{V_{y_0}}$ is doubly periodic 
and  $f\rst{V_{y_0+1}}$ is not doubly periodic, }
\end{equation} 
and either $(-a_1,-1)$ or $(-a_6,-1)$ is a period vector for $f\rst{V_{y_0}}$.  
Henceforth we assume that $i\in\{1,6\}$ is chosen such that $(-a_i,-1)$ is a period vector for $f\rst{V_{y_0}}$.

By~\eqref{eq:inf-non-unique}, 
there exists $j<y_0$ such that $(T^{(0,j)}f)\rst{B}$ is horizontally periodic of period at most $|w_2\cap\S|-1$.  Since $(-a_i,-1)$ is a period vector for $f\rst{V_{y_0}}$, it follows that
the horizontal period of $f\rst{V_{y_0}}$ is at most $|w_2\cap\S|-1$.
By~\eqref{eq:f-not-periodic}, $f\rst{V_1}$ cannot be extended to a periodic coloring of $\ZZ$ with period vector parallel to $w_i$.  It follows that $f\rst{V_{y_0+1}}$ is not doubly periodic (if $y_0<0$ this follows from the definition of $y_0$ and if $y_0=0$ from~\eqref{eq:f-not-periodic}).  Let $p_1\in V(\S)$ be the vertex at the intersection of the edges $w_1$ and $w_2$ and let $p_2\in V(\S)$ be the vertex at the intersection of the edges $w_2$ and $w_3$.  Since $\S$ is $\eta$-generating, if there exists $i\in\{1,2\}$ and $x\in\Z$ such that $(T^{-(x,y_011)}f)\rst{\S\setminus p_i}$ coincides with $(T^{-(x-a_i,y_0-2)}f)\rst{\S\setminus p_i}$, then $f\rst{V_{y_0+1}}$ is doubly periodic, a contradiction.  It follows that 
for all $m\in\Z$, there exists $x\in\{m,m+1,\dots,m+|w_2\cap\S|-2\}$ such that 
\begin{equation}\label{eq:density}
f(x,y_0+1)\neq f(x-a_i,y_0).
\end{equation}

Let $\S_R$ be as in~\eqref{eq:S_R}.  By~\eqref{eq:size-of-S*}, every horizontal line that has nonempty intersection with $\S$ intersects in at least $|w_1\cap\S|$ integer points,  and so every such line intersects $\S_R$ in at least $|w_1\cap\S|-1$ integer points.

We claim that there are at least three distinct $\eta$-colorings of $\S_R$ which extend non-uniquely to an $\eta$-coloring of $\S$.

First by~\eqref{eq:extension}, there exists $x\in\Z$ such that $(T^{-(x,y_0-2)}f)\rst{\S_R}$ does not extend uniquely to an $\eta$-coloring of $\S$ and by~\eqref{eq:period-of-f-on-region} this coloring of $\S_R$ is periodic with period vector $(-a_i,-1)$.  Thus there is an $\eta$-coloring of $\S_R$ that does not extend uniquely to an $\eta$-coloring of $\S$ and this coloring is periodic with period vector $(-a_i,-1)$.

Second, consider the set of colorings of $\S_R$ of the form $(T^{-(x,y_0-1)}f)\rst{\S_R}$.  By~\eqref{eq:extension}, there exists $x_{y_0-1}\in\Z$ such that $(T^{-(x_{y_0-1},y_0-1)}f)\rst{\S_R}$ does not extend uniquely to an $\eta$-coloring of $\S$.  
By~\eqref{eq:density}, there exists a integer point $(x,2)\in w_2$ such that $(T^{-(x_{y_0-1},y_0-1)}f)\rst{\S_R}(x,2)\neq(T^{-(x_{y_0-1},y_0-1)}f)\rst{\S_R}(x-a_i,1)$ but the bottom two horizontal lines of $\S$ are periodic with period vector $(-a_i,-1)$ by~\eqref{eq:period-of-f-on-region}.  Therefore this coloring is distinct from the first coloring of $\S_R$. 

Third, consider the set of colorings of $\S_R$ of the form $(T^{-(x,y_0)}f)\rst{\S_R}$.  Again by~\eqref{eq:extension}, there exists $x_{y_0}\in\Z$ such that $(T^{-(x_{y_0},y_0)}f)\rst{\S_R}$ does not extend uniquely to an $\eta$-coloring of $\S$.  By~\eqref{eq:density}, there exists an integer point $(x,0)\in w_5$ such that $(T^{-(x_{y_0},y_0)}f)\rst{\S_R}(x,0)\neq(T^{-(x_{y_0},y_0)}f)\rst{\S_R}(x+a_i,1)$.  Therefore this coloring is distinct from the first two colorings.  Thus we have three distinct $\eta$-colorings of $\S_R$ which extend non-uniquely to an $\eta$-coloring of $\S$.

But since $\S$ satisfies~\eqref{eq:subset}, we have $D_{\eta}(\S_R)>D_{\eta}(\S)$.  By definition, $|\S_R|=|\S|-3$, and so we have $P_{\eta}(\S)\leq P_{\eta}(\S_R)+2$.  Therefore there are at most two colorings of $\S_R$ that extend non-uniquely to an $\eta$-coloring of $\S$, a contradiction.
\end{proof}

\section{Completing the proof of the main theorem} 

We recall the statement of Theorem~\ref{th:main}:
\begin{theorem*}
Suppose $\eta\colon \ZZ\to\A$ and there exists $n\in\N$ such that $P_{\eta}(n,3)\leq 3n$.  Then $\eta$ is periodic.
\end{theorem*}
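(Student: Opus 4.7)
The plan is to prove Theorem~\ref{th:main} by a case analysis on the number of nonexpansive lines for the dynamical system $X_\eta$, using the machinery already assembled. Everything has been set up so that only the bookkeeping remains: Propositions~\ref{exactlytwo} and~\ref{exactlythree} rule out the middle cases, while the Boyle--Lind theorem and the Cyr--Kra theorem handle the extreme cases.

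First I would observe that the total number of nonexpansive lines is at most three. By Proposition~\ref{parallelprop}, under the hypothesis $P_\eta(R_{n,3})\leq 3n$ there exists an $\eta$-generating set $\S\subseteq R_{n,3}$ with the property that every nonexpansive direction is parallel to some edge of $\S$. Since $\S$ is contained in a rectangle of height $3$, its boundary polygon $\partial\S$ contains at most two horizontal edges and at most four non-horizontal edges (each such edge joins two integer points with distinct $y$-coordinates in $\{0,1,2\}$), so $|E(\S)|\leq 6$. Grouping edges into parallel pairs gives at most three distinct undirected lines through the origin parallel to boundary edges of $\S$, hence at most three nonexpansive lines.

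Next I would split into cases. If $X_\eta$ has \emph{no} nonexpansive lines, then by Theorem~\ref{thm:doublyperiodic} (Boyle--Lind), $\eta$ is doubly periodic, and in particular periodic. If $X_\eta$ has \emph{exactly one} nonexpansive line, then the complexity bound $P_\eta(R_{n,3})\leq 3n$ together with Theorem~\ref{thm:singlyperiodic} (Cyr--Kra) shows that $\eta$ is periodic (in fact singly periodic but not doubly periodic). If $X_\eta$ has \emph{exactly two} nonexpansive lines, Proposition~\ref{exactlytwo} shows that no such $n$ can exist, contradicting our hypothesis. If $X_\eta$ has \emph{exactly three} nonexpansive lines, Proposition~\ref{exactlythree} similarly yields a contradiction. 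Since these four cases are exhaustive by the previous paragraph, in every surviving case $\eta$ is periodic, completing the proof.

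There is essentially no obstacle left at this stage: the hard work has been done in constructing the $\ell$-balanced sets (Proposition~\ref{lemma:balanced}), using them to establish strong periodicity statements for elements of $X_\eta$ (Proposition~\ref{prop:period} and Corollary~\ref{cor:periodicstripimpliesperiodic}), and then in the two careful arguments of Propositions~\ref{exactlytwo} and~\ref{exactlythree} which exploit the restrictive geometry of height-three rectangles to rule out multiple nonexpansive lines. The only subtlety worth double-checking in writing out the final proof is that Theorem~\ref{thm:singlyperiodic} is stated for general $R_{n,k}$ with $P_\eta(R_{n,k})\leq nk$, so it applies directly with $k=3$ to our hypothesis; no additional argument is needed in the single nonexpansive line case.
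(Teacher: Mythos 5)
Your overall structure is exactly the paper's: bound the number of nonexpansive lines by three, then split into the four cases, handling $0$ and $1$ lines via Theorem~\ref{thm:doublyperiodic} and Theorem~\ref{thm:singlyperiodic} and ruling out $2$ and $3$ lines via Propositions~\ref{exactlytwo} and~\ref{exactlythree}. However, there is a genuine gap in the one step you actually argue yourself: the bound of at most three nonexpansive lines. Your justification is that the (at most six) edges of $\S$ ``group into parallel pairs,'' giving at most three undirected edge directions. That is false as a statement of convex geometry: a convex subset of $R_{n,3}$ can have six boundary edges realizing five distinct undirected directions. For instance, the convex hexagon with vertices $(0,0)$, $(10,0)$, $(13,1)$, $(15,2)$, $(5,2)$, $(1,1)$ has two horizontal edges and four non-horizontal edges of slopes $1/3$, $1/2$, $1/4$, and $1$, no two of which are parallel. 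So from $|E(\S)|\leq 6$ and Proposition~\ref{parallelprop} alone you only get at most five or six nonexpansive lines, and then the cases of four or five nonexpansive lines are not covered by Propositions~\ref{exactlytwo} and~\ref{exactlythree}; your case analysis is not exhaustive as written.

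The correct argument is dynamical rather than geometric, and it is why Proposition~\ref{cor:antiparallel} was proved: one may assume $\eta$ is aperiodic (otherwise there is nothing to prove), and then by Proposition~\ref{cor:linedirection} every nonexpansive line has an orientation that is a nonexpansive direction, while Proposition~\ref{cor:antiparallel} shows the antiparallel orientation is then nonexpansive as well. Since by Proposition~\ref{parallelprop} each nonexpansive direction is parallel to a (necessarily distinct) edge of $\S$, each nonexpansive line accounts for two of the at most six edges, which yields at most three nonexpansive lines. With that substitution your proof coincides with the paper's; the rest of your outline, including the observation that Theorem~\ref{thm:singlyperiodic} applies directly with $k=3$, is fine.
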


\begin{proof}
Suppose there exists $n\in\N$ such that $P_{\eta}(n,3)\leq3n$.  By Proposition~\ref{parallelprop} there exists an $\eta$-generating set $\S\subseteq R_{n,3}$.  Since $\S$ is convex and the endpoints of any edge of $\partial\S$ are integer points in $R_{n,3}$, $E(\S)$ has at most six edges.  Also by Proposition~\ref{parallelprop} every nonexpansive direction is parallel to an edge in $E(\S)$, and so there are at most six nonexpansive directions for $\eta$.  By Proposition~\ref{cor:linedirection}, every nonexpansive line has an orientation that determines a nonexpansive direction.  By Proposition~\ref{cor:antiparallel}, the direction antiparallel to any nonexpansive direction is also nonexpansive (i.e. if $\ell$ is a nonexpansive line then both orientations on $\ell$ determine nonexpansive directions).  Therefore there are at most three nonexpansive lines for $\eta$.

There are four cases to consider.  If there are no nonexpansive lines for $\eta$, then $\eta$ is doubly periodic by Theorem~\ref{thm:doublyperiodic}.  If there is exactly one nonexpansive line for $\eta$, then $\eta$ is singly (but not doubly) periodic by Theorem~\ref{thm:singlyperiodic}.  If there are exactly two nonexpansive lines for $\eta$, then Proposition~\ref{exactlytwo} implies that
$P_{\eta}(R_{n,3})>3n$, a contradiction.  If there are exactly three nonexpansive lines for $\eta$, 
then Proposition~\ref{exactlythree} implies that $P_{\eta}(R_{n,3})>3n$, again a contradiction.  The theorem follows. 
\end{proof}

\section{Further Directions}
Sander and Tijdeman~\cite{ST2} conjectured that for $\eta\colon\ZZ\to\A$, 
if there exists a finite set $\S\subset\ZZ$ 
that is the restriction of a convex set in $\R^2$ to $\ZZ$ and such that $P_\eta(\S)\leq |\S|$, 
where $|\S|$ denotes the number of integer points in $\S$, then $\eta$ is periodic.  
Their result in~\cite{ST} shows that this conjecture holds for rectangles $R_{n,2}$ of height $2$.  
More generally, rephrasing their arguments in our language, 
their proof also covers more 
convex shapes of height $2$.  Namely, if $\S\subset\ZZ$ is a finite set 
that is the restriction of a convex set in $\R^2$ to $\ZZ$ satisfying 
$P_\eta(\S)\leq |\S|$ and such that $\S$ is contained in the union of two adjacent parallel rational 
lines, then $\eta$ is periodic.  
The construction of a generating set works in the more general setting of such a shape $\S$, 
and results in a generating set with $3$ or $4$ edges, and with the possible exception of a single direction (the analog of horizontal) it is balanced.  There can be at most $2$ nonexpansive directions, and we eliminate the case of $2$ in a similar manner to that done for rectangular shapes. 

However, in height $3$, we are unable to generalize our result of Theorem~\ref{th:main} 
to prove the analog for more general convex shapes with a restriction on the height, 
meaning a convex shape contained in a strip of width $3$.  
While the construction of generating sets passes through, resulting in generating sets 
with at most $6$ edges, we are not able to show that they are balanced in all (but perhaps 
the analog of the horizontal) directions.  This is the only hurdle remaining for completing 
a more general result for configurations of height $3$.  

For more general rectangles $R_{n,k}$ with $k\geq 4$, the construction of generating sets, once again, is general. 
Again, a problem arises with proving the existence of balanced sets.  Furthermore, 
the counting of configurations seems to be significantly more difficult in the absence of  
the simple geometry available in height $3$.

\end{document}